\newcommand{\tX}{\widetilde{X}}
\newcommand{\tT}{\widetilde{T}}
\newcommand{\ttau}{\widetilde{\tau}}
\newtheorem{thm}{Theorem}[section]
\newtheorem{prop}[thm]{Proposition}
\newtheorem{cor}[thm]{Corollary}
\newtheorem{lemma}[thm]{Lemma}
\newtheorem{ass}{Assumption}
\newtheorem{rem}[thm]{Remark}
\numberwithin{equation}{section}
\theoremstyle{definition} 
\theoremstyle{definition}
\renewcommand{\P}{\mathbb{P}}
\newcommand{\R}{\mathbb{R}}
\newcommand{\E}{\mathbb{E}}
\newcommand{\N}{\mathbb{N}}
\newcommand{\eps}{\varepsilon}
\numberwithin{equation}{section}
\begin{document}

\title[Long time behaviour of branching processes with catastrophes]
{Long time behaviour of continuous-state nonlinear branching processes with catastrophes}

\author{Aline Marguet}
\address{Aline Marguet, Univ. Grenoble Alpes, INRIA, 38000 Grenoble, France}
\email{aline.marguet@inria.fr}

\author{Charline Smadi}
\address{Charline Smadi, Univ. Grenoble Alpes, INRAE, LESSEM, 38000 Grenoble, France
 and Univ. Grenoble Alpes, CNRS, Institut Fourier, 38000 Grenoble, France}
\email{charline.smadi@inrae.fr}

\date{}

\maketitle

\begin{abstract}
Motivated by the study of a parasite infection in a cell line, we introduce a general class of Markov processes for the modelling of population dynamics.
The population process evolves as a diffusion with positive jumps whose rate is a function of the population size. It also undergoes catastrophic events which kill a fraction of the population, at a rate depending on the population state. 
We study the long time behaviour of this class of processes.
 \end{abstract}

 \vspace{0.2in}

\noindent {\sc Key words and phrases}: Continuous-time and space branching Markov processes, jumps, long time behaviour, absorption, explosion

\bigskip

\noindent MSC 2000 subject classifications: 60J80, 60J85, 60H10.



\section{Introduction}
We introduce a general class of non-negative continuous-time and space Markov processes, including diffusive terms, as well as negative and positive jumps. 
They can be seen as a generalization of a class of continuous-state nonlinear branching processes introduced recently in \cite{li2017general}, which did not allow for negative jumps.
Our motivation comes from the study of a parasite infection in a cell population (see the companion paper \cite{long}). The processes studied in the current work may indeed be interpreted as the dynamics of the quantity of parasites in a cell line. Catastrophic events correspond to cell divisions during which a cell splits its parasites between its two daughter cells, according to a probability kernel $\kappa (d\theta)$ on $(0,1)$. The quantity of parasites in a cell line is thus multiplied by $\theta \in (0,1)$, which can also be interpreted as the death of a fraction $(1-\theta)$ of the parasites in the cell line.
First, we investigate the possibility for the processes to be absorbed or to explode in finite time. In each case, we give conditions under which the event has null or positive probability, and even provide conditions under which it happens almost surely. Building on these results, we then explore the long time behaviour of the process.
We give criteria for the processes to converge to a positive random variable,
to $0$ or to $\infty$.
Moreover, in the case of almost sure extinction, we give bounds on the exponential decay of the survival probability.

The class of processes under study belongs to a class of processes recently introduced as 
strong solutions of Stochastic Differential Equations (SDE) in \cite{palau2018branching}, and only few processes of this class have been studied until now.
This framework allows to take into account interactions between individuals as well as the 
effects of the environment. 
The addition of interactions between individuals in continuous-time branching processes has recently attracted a lot of interest. For instance, 
Feller diffusions and Continous State Branching Processes (CSBPs) with logistic competition have been studied in \cite{lambert2005branching,le2013trees} and 
\cite{berestycki2018ray}, respectively, Feller diffusions with some nonlinear birth rates have been studied in \cite{pardoux2014path}, 
and polynomial interactions have been considered in \cite{li2018continuous}.
Li and coauthors \cite{li2017general} have recently introduced a general class of continuous state nonlinear branching processes, 
and have investigated extinction, explosion and coming down from infinity for this class.
However, in all these models, only positive jumps are allowed. They result from large birth events, 
and were first introduced by
constructing the continuous state process 
as the limit of a sequence of discrete branching processes (see for instance 
\cite{lamperti1967limit,bansaye2015scaling}).
In parallel, models where the interactions between individuals result from the fact that the whole population is subject to the variations of the same environment
have been intensively studied recently, in particular in the framework of CSBPs in random environment.
This class of models, initially introduced by Keiding and Kurtz \cite{kurtz1978diffusion,keiding1975extinction} in the case of Feller diffusions in a Brownian environment, 
have been generalised and studied by many authors during the last decade
\cite{boeinghoff2011branching,BPS,palau2016asymptotic,
palau2017continuous,he2018continuous,palau2018branching,
li2018asymptotic,bansaye2019extinction}.
In this setting, negative jumps may occur, being for instance the result of environmental catastrophes killing each 
individual with the 
same probability \cite{BPS}. However, in CSBPs in random environment, the environment is independent of the population state. In particular the 
rate of catastrophes does not depend on the population size. We relax this assumption in the current work.\\

In the next section, we define the processes of interest and give sufficient conditions for their existence and uniqueness as the solution of an SDE. Sections \ref{section_abs_lineage}, \ref{section_expl_lineage} and \ref{sec:condH} are dedicated to the possibility of absorption and explosion of the process in finite time. In Section \ref{section_class} we study the long time behaviour of the process. The proofs are derived in Section \ref{section_proofs}.\\

In the sequel, we work on a filtered probability space $(\Omega, \mathcal{F}, \mathcal{F}_t,\mathbb{P})$, $\N:=\{0,1,2,...\}$ will denote the set of non-negative integers, $\R_+:=[0,\infty)$ the real line 
and $\R_+^*:=(0,\infty)$.
We will denote by $\mathcal{C}_b^2(\mathbb{R}_+)$ the set of twice continuously differentiable bounded functions on $\mathbb{R}_+$. Finally, for any stochastic 
process $X$ on $\mathbb{R}_+$, we will denote by 
$\mathbb{E}_x\left[f(X_t)\right]=\E\left[f(X_t)\big|X_0 = x\right]$.

\section{Definition of the population process} \label{section_model}

We consider continuous-time and continuous-state Markov processes solution to the following SDE: 
\begin{align}\label{eq:EDS}
X_t= X_0+ \int_0^t g(X_s) ds+ \int_0^t\sqrt{2 \sigma^2(X_s) }dB_s+ \int_0^t\int_0^{p(X_{s^-})}\int_{\mathbb{R}_+}z\widetilde{Q}(ds,dx,dz)&\\
+ \int_0^t  \int_0^{r(X_{s^-})} \int_0^1 (\theta-1)X_{s^-}N(ds,dx,d\theta)&, \nonumber
\end{align}
where $X_0$ is non-negative, $g$ is a real function on $\R_+$, $\sigma$, $p$ and $r$ are non-negative functions on $\R_+$, $B$ is a standard Brownian motion,  
$\widetilde{Q}$ is a compensated Poisson point measure with intensity 
$ds\otimes dx\otimes \pi(dz)$, $\pi$ is a positive measure on $\mathbb{R}_+$, and $N$ is a Poisson point measure with intensity 
$ds\otimes dx\otimes  \kappa(d\theta)$ where $\kappa$ is a probability measure on $[0,1]$. We assume that $N$, $\tilde{Q}$ and $B$ are mutually independent. 

Under some mild conditions, the SDE \eqref{eq:EDS} has a unique pathwise strong solution.
We will work under these conditions in the sequel.

\begin{ass}\label{ass_PSS}
\
\begin{itemize}
\item[-]The functions $r$ and $p$ are locally Lipschitz and $p(0) = 0$, $r(0)<\infty$.
\item[-]The function $g$ is continuous on $\mathbb{R}_+$, $g(0)=0$ and for any $n \in \N$ there exists a finite constant $B_n$ such that for any $0 \leq x \leq y \leq n$
\begin{align*} |g(y)-g(x)|
\leq B_n \phi(y-x),
\end{align*}
where
\begin{align*}
\phi(x) = \left\lbrace\begin{array}{ll}
 x \left(1-\ln x\right) & \textrm{if } x\leq 1,\\
 1 & \textrm{if } x>1.\\
\end{array}\right.
\end{align*}
\item[-]The function $p$ is non-decreasing on $\mathbb{R}_+$.
\item[-]The function $\sigma$ is H\"older continuous with index $1/2$ on compact sets and $\sigma(0)=0$. 
\item[-]The measure $\pi$ satisfies
$$ \int_0^\infty \left(z \wedge z^2\right)\pi(dz)<\infty. $$
\end{itemize}
\end{ass}

The form of this assumption comes from the conditions of \cite[Proposition 1]{palau2018branching} that we will apply to get the next result. The condition $g(0)=p(0)=\sigma(0)=0$ 
ensures that the process stays non-negative and that $0$ is an absorbing state.
Notice that the second point makes sense from a biological point of view. The value of $p(x)$ corresponds to the rate of large reproductive events when the population is of size $x$. It thus means that more individuals produce more offspring.

\begin{prop} \label{prop_sol_SDE}
Suppose that Assumption \ref{ass_PSS} holds.
 Then, Equation \eqref{eq:EDS} has a pathwise unique non-negative strong solution absorbed at $0$ and $\infty$.
 It is a Markov process with infinitesimal generator $\mathcal{G}$, satisfying for all $f\in C_b^2(\mathbb{R}_+)$,
\begin{align}
\mathcal{G}f(x) = g(x)f'(x)+\sigma^2(x)f''(x)+p(x)\int_{\mathbb{R}_+}\left(f(x+z)-f(x)-zf'(x)\right)\pi(dz)& \nonumber \\
 + r(x)\int_0^1\left(f(\theta x)-f(x)\right)\kappa(d\theta)&. \label{inf_gene}
\end{align}
\end{prop}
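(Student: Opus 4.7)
The plan is to reduce the statement to \cite[Proposition 1]{palau2018branching}, whose hypotheses are mirrored almost verbatim by Assumption \ref{ass_PSS}, and then check the three remaining claims (absorption at $0$, absorption at $\infty$, and the form of the generator) by direct computation.

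First, I would observe that the SDE \eqref{eq:EDS} fits into the framework of \cite{palau2018branching}: the drift $g$, the diffusion coefficient $\sqrt{2\sigma^2}$, the positive-jump kernel driven by $p$ and $\pi$, and the multiplicative negative-jump kernel driven by $r$ and $\kappa$ all match the classes considered there. The structural assumptions are set up precisely to satisfy the hypotheses of \cite[Proposition 1]{palau2018branching}: the Yamada–Watanabe modulus $\phi$ controls the drift, the $1/2$-Hölder condition on $\sigma$ handles the diffusion term (Yamada–Watanabe for the Brownian part), the local Lipschitz assumption on $p$ together with the integrability of $(z\wedge z^2)$ against $\pi$ controls the compensated positive-jump integral, and the local Lipschitz assumption on $r$ together with the fact that $\theta-1\in[-1,0]$ controls the multiplicative catastrophe integral. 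Applying that proposition produces a pathwise unique non-negative strong solution $(X_t)_{t\geq 0}$ defined up to its possible explosion time, which we then extend by setting $X_t=\infty$ after explosion.

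Next I would verify the absorbing-state statements. For $0$: if $X_{s^-}=0$, then $g(0)=0$, $\sigma(0)=0$, and since $p(0)=0$ the integrand $\mathbf{1}_{\{x<p(X_{s^-})\}}$ vanishes so no positive jump can occur; the catastrophe integrand is $(\theta-1)\cdot 0=0$ even though $r(0)<\infty$. Hence all coefficients in \eqref{eq:EDS} vanish at $0$ and the process stays there. For $\infty$: by construction the solution is stopped (and frozen) at its explosion time, so $\infty$ is absorbing as well. A standard localisation by $\tau_n:=\inf\{t\geq 0: X_t\geq n\}$ makes this rigorous and will also be needed for the generator computation.

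Finally, the generator formula \eqref{inf_gene} follows from Itô's formula applied to $f(X_t)$ for $f\in\mathcal{C}_b^2(\mathbb{R}_+)$, localised at $\tau_n$. The continuous part yields $g(x)f'(x)+\sigma^2(x)f''(x)$; the compensated positive-jump integral against $\widetilde{Q}$ has, after bounding $|f(x+z)-f(x)-zf'(x)|\leq \|f''\|_\infty z^2/2$ on $z\leq 1$ and $|f(x+z)-f(x)-zf'(x)|\leq 2\|f\|_\infty+\|f'\|_\infty z$ on $z>1$, a finite dual predictable projection thanks to $\int(z\wedge z^2)\pi(dz)<\infty$, producing the term $p(x)\int_{\R_+}(f(x+z)-f(x)-zf'(x))\pi(dz)$; the non-compensated catastrophe integral against $N$ produces $r(x)\int_0^1(f(\theta x)-f(x))\kappa(d\theta)$, which is bounded by $2\|f\|_\infty r(x)$ and hence integrable on $[0,\tau_n]$. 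Taking expectations, dividing by $t$, and passing to the limit $t\downarrow 0$ along bounded solutions identifies $\mathcal{G}f$ as in \eqref{inf_gene}; the Markov property is inherited from the pathwise uniqueness.

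The main obstacle is not any single computation but checking carefully that Assumption \ref{ass_PSS} actually maps onto the hypotheses of \cite[Proposition 1]{palau2018branching}—in particular, making sure the multiplicative form of the catastrophe jumps $(\theta-1)x$ is handled by the local Lipschitz assumption on $r$ (since the map $x\mapsto(\theta-1)x$ is globally Lipschitz uniformly in $\theta\in[0,1]$), so that existence and uniqueness really are granted up to explosion without needing additional linear-growth conditions.
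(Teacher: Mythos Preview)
Your approach is the same as the paper's: both reduce the statement to verifying the hypotheses of \cite[Proposition~1]{palau2018branching}. The paper is more explicit in checking the three conditions (a), (b), (c) of that reference---in particular it uses the \emph{monotonicity} of $p$ (part of Assumption~\ref{ass_PSS}, which you do not invoke) to guarantee that $x\mapsto x+z\mathbf{1}_{\{u\le p(x)\}}$ is non-decreasing, a structural requirement of condition~(c) for pathwise uniqueness; aside from this point your sketch is correct, and in fact goes further than the paper in spelling out the absorption at $0$ and the It\^o computation for the generator, which the paper simply attributes to the cited result.
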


Here, we adopt the formalism of \cite{palau2018branching} for the definition of a solution to the SDE \eqref{eq:EDS}: a $[0,\infty]$-valued process $X=(X_t,t \geq 0)$ is a solution if it satisfies \eqref{eq:EDS} up to the time $\tau_n:= \inf \{ t \geq 0, X_t\geq n \}$ for all $n \geq 1$, and $X_t=\infty$ for all $t \geq \tau := \lim_{n \to \infty} \tau_n$.\\

We can now study the long time behaviour of the process $X$ solution to \eqref{eq:EDS}.

\section{Absorption of the process} \label{section_abs_lineage}

A first question, which is natural when modelling populations, is to know if the process can get extinct in finite time.
Let us introduce the stopping times $\tau^-(x)$ and $\tau^+(x)$ via
\begin{equation} \label{tau1}
\tau^-(x) := \inf\lbrace t\geq 0 : X_t<x\rbrace, \quad \tau^+(x): = \inf\lbrace t\geq 0 : X_t>x\rbrace, \quad \text{for }x>0
\end{equation}
and
\begin{equation} \label{tau2}
\tau^-(0):= \inf\lbrace t\geq 0 : X_t = 0\rbrace,
\end{equation}
with the convention $\inf\emptyset: = \infty$. Moreover, we denote by $\Theta$ a random variable distributed according to $\kappa$.\\

The study of the absorption of the process $X$ relies on the construction of a sequence of martingales. 
Let us define
\begin{align*}
\mathcal{A} := \left\lbrace a>1,\ \E[\Theta^{1-a}]<\infty\right\rbrace
\end{align*}
and the set of functions $G_a$ given for $a \in \mathcal{A} \cup (0,1)$ and $x>0$ by
\begin{align}\label{eq:Ga}
G_a(x) : =  (a-1)\left(\frac{g(x)}{x}-a\frac{\sigma^2(x)}{x^2}-r(x) \frac{1-\E[\Theta^{1-a}]}{1-a}-p(x)I_a(x)\right),
\end{align}
where 
\begin{align}\label{def:Ia}
I_a(x)= ax^{-2}\int_{\mathbb{R}_+} z^2\left(\int_0^1 (1+zx^{-1}v)^{-1-a}(1-v)dv\right)\pi(dz).
\end{align}

The behaviour around $0$ of $G_a$ characterizes the likelihood of the process $X$ to reach $0$.
It depends on everything except the negative jump term close to $0$.
This is because the division rate $r$ is bounded for small values of the process ($r(0)<\infty$ and $r$ is continuous), and thus the process cannot reach $0$ due to an accumulation of jumps.
We entitle the conditions \textbf{SN0} (Small Noise around $0$) and \textbf{LN0} (Large Noise around $0$).
\begin{enumerate}[label=\bf{(SN0)}]
\item \label{A1} There exists $a\in\mathcal{A}$ and a non-negative function $f$ on $\mathbb{R}_+$ such that 
\begin{equation} \label{condi_noext} \frac{g(x)}{x}-a\frac{\sigma^2(x)}{x^2}-p(x)I_a(x)= f(x) + o(\ln x),\quad (x\rightarrow 0).
\end{equation}
\end{enumerate}
\begin{enumerate}[label=\bf{(LN0)}]
\item\label{A2}  There exist $a<1$, $\eta>0$ and $x_0> 0$ such that for all $x\leq x_0$
\begin{equation}\label{condi_ext}
\frac{g(x)}{x}-a\frac{\sigma^2(x)}{x^2}-p(x)I_a(x) \leq - \left(\ln(x^{-1})\right) \left(\ln(\ln(x^{-1}))\right)^{1+\eta}.
\end{equation}
\end{enumerate}
\begin{rem}
Condition \ref{A2} can be generalized. Indeed, a careful reading of the proof shows that
a sufficient condition is that there exists a positive non-increasing function $\mathfrak{f}$ on $\mathbb{R}_+$ such that
\begin{enumerate}
\item[i)] there exist $a<1$ and $x_0>0$ such that for all $x\leq x_0$,
\begin{align*}
\frac{g(x)}{x}-a\frac{\sigma^2(x)}{x^2}-p(x)I_a(x) \leq -\mathfrak{f}(x),
\end{align*}
\item[ii)] there exist $\eps<1$ and $\delta>0$ such that
\begin{align*}
\sum_{n=1}^\infty (1+\delta)^n\mathfrak{f}\left(\eps^{(1-\delta)(1+\delta)^n}\right)^{-1}<\infty.
\end{align*}
\end{enumerate} 
\end{rem}

Under a first moment assumption for the positive jumps, Conditions {\bf{(SN0)}} and {\bf{(LN0)}}
may be simplified.

\begin{rem} \label{rem_simpli}
If $\int_{\R_+}z\pi(dz)<\infty$, \eqref{condi_noext} is equivalent to 
\begin{equation*}\frac{g(x)}{x}-a\frac{\sigma^2(x)}{x^2}=f(x)+ o(\ln x ),\quad (x\rightarrow 0),
\end{equation*}
and \eqref{condi_ext} is equivalent to 
\begin{equation*}
\frac{g(x)}{x}-a\frac{\sigma^2(x)}{x^2} \leq -\left(\ln(x^{-1})\right) \left(\ln(\ln(x^{-1}))\right)^{1+\eta}.
\end{equation*}
See Section \ref{sec:proofsec3} for the proof of this result. 
\end{rem}

We can now state results on the absorption of the process in terms of those two conditions.
\begin{thm}\label{thm:absorption}
Suppose that Assumption \ref{ass_PSS} holds and let $X$ be the pathwise unique solution to \eqref{eq:EDS}.
\begin{itemize}
\item[i)] If Condition \ref{A1} holds,
then $\mathbb{P}_x\left(\tau^-(0)<\infty\right)=0$ for all $x>0$. 
\item[ii)] If Condition \ref{A2} holds, then $\mathbb{P}_x\left(\tau^-(0)<\infty\right) >0$ for all small enough $x>0$.
\item[iii)]
If Condition \ref{A2} holds and if $r(x)>0$ for every $x>0$, then for any $x>0$, $\mathbb{P}_x\left(\tau^-(0)<\infty\right) >0$.
\end{itemize}
\end{thm}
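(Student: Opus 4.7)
The heart of the argument is the identity
\[
\mathcal{G}(x^{1-a}) \;=\; -\,G_a(x)\,x^{1-a},
\]
which one verifies by plugging $f(x)=x^{1-a}$ into the generator \eqref{inf_gene}. The drift contributes $(1-a)g(x)/x\cdot x^{1-a}$; the diffusion contributes $a(a-1)\sigma^2(x)/x^2\cdot x^{1-a}$; the positive-jump term, using the Taylor remainder
\[
(1+u)^{1-a}-1-(1-a)u \;=\; -a(1-a)\,u^2\!\int_0^1(1-v)(1+uv)^{-a-1}\,dv,
\]
with $u=z/x$, produces $p(x)(a-1)I_a(x)\cdot x^{1-a}$; and the catastrophe term yields $r(x)(\E[\Theta^{1-a}]-1)\cdot x^{1-a}$. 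Summing and factoring exactly matches $-G_a(x)x^{1-a}$. Itô's formula and integration by parts then show that
\[
M_t \;:=\; X_t^{1-a}\exp\!\left(\int_0^t G_a(X_s)\,ds\right)
\]
is a local martingale up to the exit times from any compact sub-interval of $(0,\infty)$.

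For part (i), I would take $a\in\mathcal{A}$ (so $a>1$, hence $x^{1-a}\to\infty$ as $x\to 0^+$) and localize with $\tau_n:=\tau^-(1/n)\wedge\tau^+(n)\wedge T$. Under \ref{A1}, the bound $(a-1)f(x)\geq 0$ together with the $o(\ln x)$ estimate gives $G_a(x)\geq -\varepsilon|\ln x|-C$ on a small neighborhood of $0$, which in turn forces the integral $\int_0^{\tau_n} G_a(X_s)\,ds$ to be bounded below by a quantity depending only on $n$ and $T$ (not on the path). Since $M_{\cdot\wedge\tau_n}$ is then a bounded martingale with $\E[M_{\tau_n}]=x^{1-a}$, the contribution of the event $\{\tau^-(1/n)\leq T\wedge\tau^+(n)\}$ is at least $n^{a-1}$ times a deterministic positive constant times its probability. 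Letting $n\to\infty$ forces $\P_x(\tau^-(0)\leq T)=0$, and sending $T\to\infty$ concludes.

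For part (ii), under \ref{A2} we have $a<1$, so $h(x):=x^{1-a}$ satisfies $h(0)=0$ and is increasing, and a direct sign analysis of $G_a$ shows $G_a(x)\geq (1-a)(\ln x^{-1})(\ln\ln x^{-1})^{1+\eta}$ for all small $x$. Hence $\mathcal{G}h\leq 0$ on $(0,x_0]$ for $x_0$ small enough, and $h(X_{\cdot\wedge\tau^-(0)\wedge\tau^+(x_0)})$ is a bona fide supermartingale. Optional stopping gives
\[
\P_x\bigl(\tau^+(x_0)<\tau^-(0)\bigr)\;\leq\;\Bigl(\frac{x}{x_0}\Bigr)^{1-a}.
\]
To upgrade this to $\P_x(\tau^-(0)<\infty)>0$ I need to rule out the event that $X$ stays in $(0,x_0)$ forever. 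This is where the superlinear growth of $G_a$ near $0$ enters: on $\{\tau^-(0)\wedge\tau^+(x_0)=\infty\}$ the integrand $G_a(X_s)$ would be uniformly bounded below by a positive constant, so $\int_0^t G_a(X_s)\,ds\to\infty$, while $X_t^{1-a}$ stays bounded; the martingale identity $\E[M_{t\wedge\tau_n}]=x^{1-a}$ combined with dominated/Fatou arguments then forces the probability of this event to vanish. For small enough $x$, $(x/x_0)^{1-a}<1$ and we conclude.

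For part (iii) the strategy is to push the process into the window $(0,x_0)$ where (ii) applies. Since $r(x)>0$ for every $x>0$ and $r$ is continuous, along any sample path that stays in a compact interval $[x_0,N]$ (avoiding explosion), catastrophes occur at rate bounded away from zero. Using the strong Markov property and a conditional argument (control the number of catastrophes $K$ one observes before a possible explosion, and note that after $K$ catastrophes with fractions $\Theta_1,\dots,\Theta_K$ the process is below some threshold with positive probability via a Borel–Cantelli-type estimate on $\prod\Theta_i$), one shows that from any $x>0$ the process enters $(0,x_0)$ with positive probability in finite time, after which (ii) applies. The main difficulty in the whole proof lies in part (ii): precisely, establishing that the process cannot linger in $(0,x_0)$ with positive probability while $G_a$ is diverging, and handling jump overshoots at the boundary $x_0$ (upward via $\widetilde{Q}$) and below $0$ (downward via $N$, where a catastrophe with $\Theta$ close to $0$ could in principle skip the target); the latter is controlled by the fact that catastrophes never drive $X$ strictly below $0$ since they multiply by $\theta\geq 0$.
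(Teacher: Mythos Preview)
Your identification of the martingale $M_t=X_t^{1-a}\exp\bigl(\int_0^t G_a(X_s)\,ds\bigr)$ is correct and is indeed the engine of the proof. However, your sketch for part~(ii) contains a genuine gap, and part~(i) has a smaller but real issue with the localization.

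\medskip

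\textbf{Part (ii): the main gap.} Your argument would prove absorption with positive probability under the mere assumption that $G_a(x)>0$ for small $x$ with some $a<1$. This is strictly weaker than \ref{A2} and is \emph{not} sufficient. Consider the deterministic process with $g(x)=x\ln x$, $\sigma=p=r\equiv 0$: here $G_a(x)=(1-a)\lvert\ln x\rvert>0$ for $x<1$, yet the solution $X_t=\exp\bigl((\ln x_0)e^{t}\bigr)$ converges to $0$ without ever reaching it, so $\P_x(\tau^-(0)<\infty)=0$. Your claim that on $\{\tau^-(0)=\tau^+(x_0)=\infty\}$ the martingale identity forces a contradiction fails here: $X_t^{1-a}\to 0$ exactly compensates $e^{\int_0^t G_a}\to\infty$, and $M_t\equiv x_0^{1-a}$ stays bounded. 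The point is that with no positive lower threshold, $X_t^{1-a}$ has no lower bound, so ``$X_t^{1-a}$ stays bounded'' gives you nothing.

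The paper's proof of (ii) uses the quantitative rate in \ref{A2} in an essential way. It introduces a sequence of levels $\varepsilon_n=\varepsilon^{(1+\delta)^n}$ and shows, via the martingale between the \emph{positive} thresholds $\varepsilon_n^{1+\delta}$ and $\varepsilon_n^{1-\delta}$, that the process descends from $\varepsilon_n$ to $\varepsilon_{n+1}$ within time $t(\varepsilon_n)\sim (n\ln(1+\delta))^{-(1+\eta)}$ with probability at least $1-c\,\varepsilon_n^{(1-a)\delta}$. The factor $(\ln\ln x^{-1})^{1+\eta}$ in \ref{A2} is precisely what makes $\sum_n t(\varepsilon_n)<\infty$, so that the chained descent reaches $0$ in finite time with probability bounded below by a convergent product. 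Your approach never touches this summability, which is the heart of the matter.

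\medskip

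\textbf{Part (i): a smaller issue.} With $\tau_n=\tau^-(1/n)\wedge\tau^+(n)\wedge T$ you need a lower bound on $G_a$ over all of $[1/n,n]$. Condition \ref{A1} only controls $G_a$ near $0$; on $[\delta,n]$ the infimum may go to $-\infty$ with $n$, so your ``deterministic positive constant'' actually depends on $n$ in an uncontrolled way, and $n^{a-1}$ times it need not diverge. The easy fix is to freeze the upper barrier: use $\tau^-(1/n)\wedge\tau^+(N)\wedge T$ with $N$ fixed, obtain $\P_x(\tau^-(1/n)\le T\wedge\tau^+(N))\to 0$, deduce $\P_x(\tau^-(0)\le T,\ \tau^+(N)>\tau^-(0))=0$, and then let $N\to\infty$ and $T\to\infty$. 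With this fix your route to (i) is actually shorter than the paper's, which proceeds via a Borel--Cantelli argument to get the dichotomy $\{\tau^-(0)=\infty\}\cup\{\tau^+(b)<\tau^-(0)\}$, and then needs a coupling with an auxiliary process (constant catastrophe rate, monotone in the initial condition) to close the loop, because $X$ itself is not monotone in its starting point.

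\medskip

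Part (iii) is fine in spirit; the paper makes the ``push below $x_0$ via catastrophes'' step explicit with a reflection-principle bound on the diffusive overshoot and a Poisson estimate on the number of jumps with $\Theta\le\nu$.
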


Theorem \ref{thm:absorption} extends \cite[Theorem 2.3]{li2017general}, 
and we thus use some ideas of the proof of \cite[Theorem 2.3]{li2017general} to derive our results.
Moreover, several adaptations are needed as negative jumps may occur in our process.
Notice that we give tighter bounds than in \cite[Theorem 2.3]{li2017general} where $(\ln(x^{-1}))^{r}, r<1$ and 
$(\ln(x^{-1}))^{r}, r>1$ were considered instead of the right-hand sides of \eqref{condi_noext} and \eqref{condi_ext}.\\

Theorem \ref{thm:absorption} shows that the behaviour of the noise around zero determines the fate of the process in terms of absorption: if it is large enough compared to the growth rate of the parasites around zero, then the probability of being absorbed in finite time is positive.
Notice that under our conditions, the process cannot be absorbed because of negative jumps.

\section{Explosion of the process} \label{section_expl_lineage}

We now focus on the possibility for the process to explode in finite time. 
For the modelling of a parasite infection, it would correspond to an explosion of the quantity of parasites in a lineage or in the cell population.
Unable to overcome the infection, the latter would thus be likely to die (see companion paper \cite{long}).\\

We define 
$$ \tau^+(\infty):=\lim_{n \to \infty} \tau^+(n), $$
and we are interested in the probability of the event $\{\tau^+(\infty)<\infty\}$.

In this case, the behaviour of $G_a$ at infinity determines the likelihood of the process to reach infinity in finite time. Unlike for the absorption behaviour, the law and frequency of negative jumps may impact the probability of explosion of the process.
We entitle the conditions \textbf{SN$\infty$} (Small Noise for large values) and \textbf{LN$\infty$} (Large Noise for large values).
\begin{enumerate}[label=\bf{(SN$\infty$)}]
\item \label{SNinfty} 
There exist $a<1$ and a non-negative function $f$ on $\mathbb{R}_+$ such that
\begin{align*} 
 \frac{g(x)}{x}-a\frac{\sigma^2(x)}{x^2}-r(x)\frac{1-\E[\Theta^{1-a}]}{1-a} -p(x)I_a(x)=-f(x)+ o(\ln x),\quad (x\rightarrow +\infty).
\end{align*}
\end{enumerate}
\begin{enumerate}[label=\bf{(LN$\infty$)}]
\item\label{LNinfty}  There exist $a\in\mathcal{A}$, $\eta>0$ and $x_0> 0$ such that for all $x\geq x_0$
\begin{align*}
\frac{g(x)}{x}-a\frac{\sigma^2(x)}{x^2}-r(x) \frac{1-\E[\Theta^{1-a}]}{1-a} -p(x)I_a(x) \geq \ln x \left(\ln(\ln x)\right)^{1+\eta}.
\end{align*}
\end{enumerate}

The next result describes the possible behaviours for the process in terms of explosion.

\begin{thm}\label{thm:explosion}
Suppose that Assumption \ref{ass_PSS} holds and let $X$ be the pathwise unique solution to \eqref{eq:EDS}.
\begin{itemize}
\item[i)] If Condition \ref{SNinfty} holds, then $\mathbb{P}_x\left(\tau^+(\infty)<\infty\right)=0$ for all $x>0$. 
\item[ii)] If Condition \ref{LNinfty} holds then $\mathbb{P}_x\left(\tau^+(\infty)<\infty\right) >0$ for all large enough $x>0$.
\item[iii)] If Condition \ref{LNinfty} holds and if $\sigma(x)+p(x)>0$ for every $x>0$, then for any $x> 0$, $\P_x(\tau^+(\infty)<\infty)>0$ .
\end{itemize}
\end{thm}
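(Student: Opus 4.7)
The engine of the proof is a one-parameter family of nonnegative local martingales. Fix $a \in \mathcal{A}\cup(0,1)$ and set $V_a(x) := x^{1-a}$. A term-by-term computation in \eqref{inf_gene} --- where the positive-jump integral is put in the form involving $I_a$ of \eqref{def:Ia} via the Taylor identity $(x+z)^{1-a} - x^{1-a} - z(1-a)x^{-a} = a(a-1)z^2 x^{-a-1}\int_0^1(1-v)(1+zvx^{-1})^{-1-a}\,dv$, and the negative-jump term contributes $V_a(x)\,r(x)(\E[\Theta^{1-a}]-1)$ --- yields the key identity $\mathcal{G}V_a(x) = -G_a(x)\,V_a(x)$ for $x>0$. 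By It\^o's formula,
\begin{equation*}
M^a_t \,:=\, V_a(X_t)\exp\!\left(\int_0^t G_a(X_s)\,ds\right)
\end{equation*}
is then a nonnegative local martingale, hence a supermartingale, on $[0,\tau^-(0)\wedge\tau^+(\infty))$. Each item will be obtained by optional stopping applied to $M^a$ with the exponent $a$ and stopping time suited to the regime.

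For item (i), I would take the $a\in(0,1)$ provided by \ref{SNinfty}, so that $V_a$ is increasing with $V_a(\infty)=\infty$. Because $f\geq 0$ and the error is $o(\ln x)$, for every $\varepsilon>0$ there exists $x_\varepsilon>0$ such that $G_a(x)\geq -\varepsilon\ln x$ on $[x_\varepsilon,\infty)$. For $x>x_\varepsilon$ and $T>0$ fixed, optional stopping of $M^a$ at $\tau^+(n)\wedge T$ --- absorbing excursions below $x_\varepsilon$ by also stopping at $\tau^-(x_\varepsilon/2)$ and using continuity of $G_a$ on $[x_\varepsilon/2,x_\varepsilon]$ --- gives
\begin{equation*}
V_a(x) \,\geq\, n^{1-a-\varepsilon T}\,\P_x\bigl(\tau^+(n)\leq T\bigr).
\end{equation*}
Choosing $\varepsilon<(1-a)/T$ and letting $n\to\infty$ produces $\P_x(\tau^+(n)\leq T)\to 0$; the strong Markov property then iterates this bound over successive intervals of length $T$ to yield $\P_x(\tau^+(\infty)<\infty)=0$ for every $x>0$.

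For item (ii), take the $a\in\mathcal{A}$ of \ref{LNinfty} (so $a>1$, $V_a$ decreasing, $V_a(\infty)=0$). Pick $x^*\geq x_0$ large enough that $G_a(x)\geq c\ln x(\ln\ln x)^{1+\eta}$, and in particular $G_a\geq c_*>0$, on $[x^*,\infty)$. A first application of optional stopping at $\tau^-(x^*)\wedge t$, using $V_a(X_{\tau^-(x^*)})\geq V_a(x^*)$ (since $V_a$ is decreasing and $X_{\tau^-(x^*)}\leq x^*$) and $\exp(\int_0^{\tau^-(x^*)}G_a(X_s)\,ds)\geq 1$, yields
\begin{equation*}
\P_x\bigl(\tau^-(x^*)<\infty\bigr) \,\leq\, \left(\frac{x}{x^*}\right)^{1-a}\!,
\end{equation*}
so $A:=\{\tau^-(x^*)=\infty\}$ has positive probability once $x>x^*$. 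Now I argue by contradiction: assume $\P_x(\tau^+(\infty)<\infty)=0$. Then $M^a$ is defined for all $t$ and converges a.s.\ to a finite limit (positive supermartingale); on $A$ the exponential factor blows up (because $G_a\geq c_*>0$ there), forcing $V_a(X_t)\to 0$, i.e.\ $X_t\to\infty$ a.s.\ on $A$. Once $X_t\to\infty$ the sharper bound $G_a(X_t)\geq\ln X_t(\ln\ln X_t)^{1+\eta}$ becomes effective, and combining it with the asymptotic balance $(a-1)\ln X_t = \int_0^t G_a(X_s)\,ds+O(1)$ imposed by the convergence of $M^a$ gives, after differentiation along the trajectory, $d\ln X_t/dt\gtrsim \ln X_t(\ln\ln X_t)^{1+\eta}/(a-1)$. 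The convergence of $\int^\infty du/(u\ln u(\ln\ln u)^{1+\eta})$ for $\eta>0$ then forces $X_t$ to blow up in finite time, contradicting $\tau^+(\infty)=\infty$ on $A$. Hence $\P_x(\tau^+(\infty)<\infty)>0$ for every sufficiently large $x$.

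For item (iii), the assumption $\sigma(x)+p(x)>0$ on $(0,\infty)$ ensures, via a standard support argument on \eqref{eq:EDS} (a single sufficiently large positive jump, or a local diffusive fluctuation, has positive probability of carrying $X$ above any prescribed level in finite time), that $\P_x(\tau^+(N)<\infty)>0$ for every $x>0$ and every $N\geq x$. The strong Markov property at $\tau^+(N)$ with $N$ large enough for (ii) to apply then gives $\P_x(\tau^+(\infty)<\infty)>0$ for all $x>0$. The main technical obstacle throughout will be the contradiction step in (ii): turning the a.s.\ convergence of the positive supermartingale $M^a$ into a genuine pathwise differential inequality on $X_t$ is delicate, since one only has an asymptotic equivalence rather than a classical ODE. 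A cleaner route may be the random time change $\rho_t:=\int_0^t G_a(X_s)\,ds$: on this new scale, the supermartingale structure combined with the integrability $\int^\infty du/(u\ln u(\ln\ln u)^{1+\eta})<\infty$ would translate directly into finiteness of the explosion time.
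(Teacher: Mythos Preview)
Your martingale setup via $\mathcal{G}V_a = -G_aV_a$ is correct and matches Lemma~\ref{prop:martingale}. Part (iii) is essentially the paper's argument.

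For part (i), however, there is a genuine gap. Your optional stopping at $\tau^+(n)\wedge T\wedge\tau^-(x_\varepsilon/2)$ yields only
\[
V_a(x)\;\geq\; c\,n^{1-a-\varepsilon T}\,\P_x\bigl(\tau^+(n)\leq T\wedge\tau^-(x_\varepsilon/2)\bigr),
\]
not the unrestricted $\P_x(\tau^+(n)\leq T)$ that you write. The event $\{\tau^-(x_\varepsilon/2)<\tau^+(n)\leq T\}$ is uncontrolled: the process may dip below $x_\varepsilon/2$, where $G_a$ is not bounded below, and then run up to explode. Your ``iteration over successive intervals of length $T$'' does not close this, because each restart from a low value faces the same difficulty, and there is no a priori bound on the number of such excursions before $\tau^+(\infty)$. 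The paper confronts exactly this issue: it first proves the dichotomy $\P_{y}(\tau^+(\infty)=\infty\text{ or }\tau^-(b)<\tau^+(\infty))=1$ via a Borel--Cantelli argument with levels $\varepsilon^{-n}$, and then runs a bootstrap on $\sup_{\varepsilon^{-1}\leq y\leq 2\varepsilon^{-1}}\E_y[e^{-\lambda\tau^+(\infty)};\tau^+(\infty)<\infty]$ using the strong Markov property and an overshoot bound $\P_y(X_{\tau^+(\varepsilon^{-1})}>2\varepsilon^{-1})\leq\varepsilon\int(z\wedge z^2)\pi(dz)$. This overshoot control is the missing ingredient.

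For part (ii), your contradiction route is genuinely different from the paper (which follows \cite{li2017general} with an explicit level-crossing construction at $\varepsilon^{-(1+\delta)^n}$), and it actually works more cleanly than you fear. The point is that you never need to differentiate $X_t$: the function $\phi(t):=\int_0^tG_a(X_s)\,ds$ is absolutely continuous on every compact of $A\cap\{\tau^+(\infty)=\infty\}$ (since there $X_s\in[x^*,\sup_{u\leq t}X_u]$ and $G_a$ is continuous), with $\phi'(t)=G_a(X_t)$ a.e. The supermartingale convergence gives either $(a-1)\ln X_t=\phi(t)+O(1)$ (if the limit is positive) or $(a-1)\ln X_t-\phi(t)\to+\infty$ (if the limit is zero); in both cases $\ln X_t\geq\phi(t)/(a-1)-C$ eventually, whence $\phi'(t)\geq c\,\phi(t)(\ln\phi(t))^{1+\eta}$ for large $t$. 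This is an honest differential inequality for the absolutely continuous $\phi$, and the finiteness of $\int^\infty du/(u(\ln u)^{1+\eta})$ forces $\phi$ to blow up in finite time --- contradicting $\phi(t)<\infty$ for all $t$ on $\{\tau^+(\infty)=\infty\}$. So your concern about ``differentiation along the trajectory'' is unfounded once you work with $\phi$ rather than $X$; the time-change detour you sketch is unnecessary.
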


Theorem \ref{thm:explosion} extends \cite[Theorem 2.8]{li2017general}, and
again we use some ideas of this previous work.
However, several adaptations are again needed as negative jumps may occur in our process. Moreover, we completed the proof of 
\cite[Theorem 2.8]{li2017general} as one argument seemed to be missing to conclude.
Finally, as in Theorem \ref{thm:absorption}, we give tighter bounds than in \cite[Theorem 2.8]{li2017general}.\\

It is interesting to notice that the explosion of the process depends on all the components of the population dynamics. 
The Malthusian growth rate $g$ increases the likelihood of the explosion phenomenon when increasing, whereas the fluctuations of the Brownian part and due to the 
large reproductive events decrease it. An interesting consequence of this result is that the presence of the catastrophic events may prevent the explosion of the population. In particular, if the process represents the quantity of parasites in a cell line and the catastrophes correspond to the sharing of 
the parasites between the two daughter cells at division, the cell population may avoid the explosion of the quantity of parasites by increasing its division rate 
or by modifying the law of the sharing of the parasites between the two daughter cells.

\section{Simpler conditions for absorption or explosion of the process}\label{sec:condH}

When the fluctuations of the process $X$ are no too strong (see Proposition \ref{prop_ext_cell_line} for details), the conditions for absorption and explosion of the process $X$ take simpler expressions. In particular, they do not rely on the existence of a positive real number $a$ satisfying some conditions. As we will see, they allow to make links with previous results on CSBP in random environment.

From now on, we will always assume that the following conditions hold:
\begin{equation}\label{cond_lntheta}
 \left|\E[\ln \Theta]\right|=\left|\int_0^1 \ln \theta\kappa(d\theta)\right|<\infty,\quad \int_{\R_+}\ln(1+z)\pi(dz)<\infty. 
\end{equation}
We introduce a new function $H$ (linked to the family $(G_a, a \neq 1)$) whose behaviour at $0$ (resp. infinity) is linked to the absorption (resp. explosion) behaviour of the process $X$:
\begin{equation*}\label{def:H}
H(x) := \lim_{a \to 1} \frac{G_a(x)}{a-1}= \frac{g(x)}{x}-\frac{\sigma^2(x)}{x^2}+r(x)\E\left[\ln \Theta \right] -p(x)I(x),
\end{equation*}
where $G_a$ is defined in \eqref{eq:Ga}, 
\begin{equation} \label{def:I} I(x):=\lim_{a\rightarrow 1}I_a(x) = -\int_{\mathbb{R}_+} \left[\ln\left(1+zx^{-1}\right)-zx^{-1}\right]\pi(dz),\end{equation}
and $I_a$ is defined in \eqref{def:Ia}. We refer the reader to Appendix \ref{app:I} for the derivation of the limit. Note that $I$ is well-defined under the classical moment assumption $\int_{\R_+} z\wedge z^2\pi(dz)<\infty$.
Using Theorems \ref{thm:absorption} and \ref{thm:explosion} we can prove the following result.

\begin{prop}\label{prop_ext_cell_line}
Let $X$ be the pathwise unique solution to \eqref{eq:EDS}, suppose $\int_{\R_+}z^2\pi(dz)<\infty$.
\begin{enumerate}
\item[\bf{(Absorption)}]
\begin{itemize}
\item[i)] If
$$ H(x)=o(\ln x)\quad \text{and}\quad \frac{\sigma^2(x)}{x^2} +\frac{p(x)}{x^2}=\mathcal{O}(\ln x),  \ (x\rightarrow 0)$$
then for all $x>0$, $\mathbb{P}_x\left(\tau^-(0)<\infty\right)=0.$
\item[ii)] If there exist $\eta>0$ and $x_0>0$ such that $\forall x<x_0$,
\begin{align*}
H(x)\leq -\ln(x^{-1})\left(\ln\ln (x^{-1})\right)^{1+\eta}\quad  \text{and} \quad
\frac{\sigma^2(x)}{x^2} +\frac{p(x)}{x^2}=\mathcal{O}(\ln (x^{-1}) \left(\ln\ln (x^{-1})\right)^{1+\eta}),\quad (x\rightarrow 0),
\end{align*}
then for all $x>0$, $\mathbb{P}_x\left(\tau^-(0)<\infty\right)>0.$
\end{itemize}
\item[\bf{(Explosion)}]
\begin{itemize}
\item[i)]If
$$ H(x)=o(\ln x)\quad \text{and}\quad \frac{\sigma^2(x)}{x^2} + r(x) +\frac{p(x)}{x^2}=\mathcal{O}(\ln x),  \ (x\rightarrow +\infty)$$
then for all $x>0$, $\mathbb{P}_x\left(\tau^+(\infty)<\infty\right)=0.$
\item[ii)] If there exist $\eta>0$ and $x_0>0$ such that $\forall x>x_0$,
\begin{align*}
H(x)\geq \ln x\left(\ln\ln x\right)^{1+\eta}\quad \text{and}\quad 
\frac{\sigma^2(x)}{x^2} + r(x) +\frac{p(x)}{x^2}=\mathcal{O}(\ln x \left(\ln\ln x\right)^{1+\eta}),\quad (x\rightarrow +\infty),
\end{align*}
then for all $x>0$, $\mathbb{P}_x\left(\tau^+(\infty)<\infty\right)>0.$
\end{itemize} 
\end{enumerate}
\end{prop}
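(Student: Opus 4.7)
My plan is to derive each of the four statements of Proposition \ref{prop_ext_cell_line} from Theorems \ref{thm:absorption} and \ref{thm:explosion} by choosing the parameter $a$ close to $1$ and exploiting $H=\lim_{a\to 1}G_a/(a-1)$. The core algebraic step is the identity
\begin{equation*}
\frac{G_a(x)}{a-1}=H(x)+(1-a)\frac{\sigma^2(x)}{x^2}-r(x)\!\left[\E[\ln\Theta]+\frac{1-\E[\Theta^{1-a}]}{1-a}\right]-p(x)\bigl(I_a(x)-I(x)\bigr),
\end{equation*}
obtained by substituting the definition of $H$ into $G_a/(a-1)$. Taylor expansion of $\Theta^{1-a}$ and $(1+z/x)^{1-a}$ around $a=1$ shows that each of the three correction terms is $|1-a|$ times a quantity controlled by, respectively, $\sigma^2(x)/x^2$, $r(x)$, and $p(x)\int\ln^2(1+z/x)\pi(dz)$. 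Under the extra assumption $\int_{\R_+}z^2\pi(dz)<\infty$, the last integral can be bounded by a constant times $1/x^2$ near $0$ (splitting $\{z\leq x\}$, where $\ln(1+z/x)\leq z/x$, from $\{z>x\}$, where one uses the moment bound on $\pi$) and by a comparable quantity near $\infty$, so that the correction from $p$ is dominated by $(1-a)\cdot p(x)/x^2$ up to logarithmic factors.

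The two \textbf{LN} cases follow cleanly from this comparison. For Absorption ii) I would take $a<1$ close enough to $1$ that the total constant $C(1-a)$ absorbing the $\mathcal{O}$-hypothesis is $<1/2$; all three correction terms are then non-negative (since $-\E[\ln\Theta]>0$ and $I-I_a>0$ for $a<1$), yielding
\begin{equation*}
\frac{g(x)}{x}-a\frac{\sigma^2(x)}{x^2}-p(x)I_a(x)\leq H(x)+C(1-a)\ln(x^{-1})(\ln\ln(x^{-1}))^{1+\eta}\leq -\tfrac12\ln(x^{-1})(\ln\ln(x^{-1}))^{1+\eta}
\end{equation*}
for all $x$ small enough. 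Since $(\ln\ln(x^{-1}))^{\eta-\eta'}\to\infty$ for any $\eta'<\eta$, this implies condition LN0 with exponent $1+\eta'$, and Theorem \ref{thm:absorption}(ii) concludes. Explosion ii) is symmetric, taking $a>1$, $a\in\mathcal{A}$, close to $1$; the three corrections now all have the opposite sign, and a lower bound on $G_a/(a-1)$ of the form $(1-C(a-1))\ln x(\ln\ln x)^{1+\eta}$ gives LN$\infty$ with exponent $1+\eta'$ for $x$ large.

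For the two \textbf{SN} cases one would take $a$ on the opposite side of $1$. The non-negative, bounded contribution $r(x)|\E[\ln\Theta]|$ (bounded near the relevant boundary by continuity of $r$) can be placed into the non-negative function $f$ of the decomposition SN0/SN$\infty$, while the remaining corrections, of size $(1-a)\cdot\mathcal{O}(\ln x)$, should be absorbed into the $o(\ln x)$ remainder. The main obstacle is that, for a fixed $a$ close to but distinct from $1$, $(1-a)\cdot\mathcal{O}(\ln x)$ is strictly $\mathcal{O}(\ln x)$ rather than $o(\ln x)$; making the decomposition rigorous requires a refined Taylor estimate in which the implicit constant in the $\mathcal{O}(\ln x)$ bound genuinely vanishes as $a\to 1$, which is where the sharpness of the moment control $\int z^2\pi(dz)<\infty$ becomes essential. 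A robust alternative, avoiding this subtlety (and the existence issue of $\mathcal{A}$ when $\kappa$ charges neighbourhoods of $0$ too heavily), would be to rerun the martingale arguments behind Theorems \ref{thm:absorption}–\ref{thm:explosion} directly with the test function $\ln x$, using the identity $\mathcal{G}(\ln x)=H(x)$ (valid under \eqref{cond_lntheta}) in place of the family $x^{1-a}$.
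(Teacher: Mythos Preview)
Your approach—comparing $G_a/(a-1)$ with $H$ by Taylor expansion in $a$ around $1$ and then invoking Theorems \ref{thm:absorption} and \ref{thm:explosion}—is exactly the paper's. Your treatment of the two \textbf{LN} cases (Absorption~ii) with $a<1$, Explosion~ii) with $a\in\mathcal{A}$) is correct and matches the paper; the paper obtains the same control on $p(x)|I_a(x)-I(x)|$ via the explicit derivative estimate $0\leq\partial_a\bigl(x^2 I_a(x)\bigr)\leq\int z^2\pi(dz)$, which is slightly cleaner than your split $\{z\leq x\}\cup\{z>x\}$ but yields the same $(1-a)\,p(x)/x^2$ bound. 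One small imprecision: the non-negativity of the corrections is not what gives your upper bound on the left-hand side of \ref{A2}; what matters is that they are bounded \emph{above} by $C|1-a|$ times the $\mathcal{O}$-quantity, plus the bounded term $r(x)|\E[\ln\Theta]|$, which is harmless for $x$ small.

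Your concern about the \textbf{SN} cases is well placed—and in fact the paper's own proof has precisely the quantifier gap you describe. The paper fixes $\eps>0$, produces $a_0(\eps)>1$ and $x_1(\eps)$ with $|H_a(x)|\leq\eps|\ln x|$ for all $x<x_1(\eps)$ and $a\in(1,a_0(\eps))$, and then asserts ``thus Condition \ref{A1} holds''. But \ref{A1} asks for a \emph{single} $a\in\mathcal{A}$ with $H_a=f+o(\ln x)$; since for $a>1$ the corrections $-(a-1)\sigma^2(x)/x^2$ and $-p(x)(I_a(x)-I(x))$ are negative and of exact order $(a-1)\cdot\mathcal{O}(|\ln x|)$, they cannot be placed into the non-negative $f$, and for any fixed $a$ the remainder is only $\mathcal{O}(\ln x)$, not $o(\ln x)$. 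So neither you nor the paper closes this gap; your proposed alternative—rerunning the martingale argument of Lemma \ref{prop:martingale} directly with $\ln X$ in place of $X^{1-a}$, using $\mathcal{G}\ln x=H(x)$—is a sound way around it and also sidesteps the tacit hypothesis $\mathcal{A}\neq\emptyset$, which neither proof verifies.
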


We thus see that for the phenomena of absorption and explosion, the trade-off between the growth of the parasites and the division of the parasites between the 
two daughter cells is fully described by the behaviour of the function $H$ at zero and infinity. In fact, and as we will see in the next section, 
the long time behaviour of the infection in a cell line is governed by the behaviour of this function. However, to conclude on the behaviour of the process in finite time, we also need the variance of the noise and the rate of positive jumps to be small enough around $0$ or infinity.
Note that if $g(x)\equiv gx, r(x)\equiv r$ and $p(x)\equiv 0$, we have $H(x) = g+\E[\ln\Theta]-\sigma(x)^2x^{-2}$ and we retrieve the key quantity $g+\E[\ln\Theta]$ found in \cite{BT11} but unfortunately, the condition on $\sigma$ for the case of absorption is too strong to be satisfied by the standard noise of Feller diffusions $\sigma(x)^2 =\sigma ^2 x$. However, the weaker assumption \ref{A2} is satisfied in this case. But for the finite time behaviour of the process, rather than the sign of $g+\E[\ln\Theta]$, it is the strength of the fluctuations that matters.

More generally when $X$ is a CSBP in random environment, there exists a Lévy process $\bar{K}$ such that $(X_te^{-\bar{K}_t}, t \geq 0)$ is a non-negative local martingale, and as such a non-negative supermartingale which converges to a non-degenerate random variable (see \cite{BPS,palau2016asymptotic,li2018asymptotic,he2018continuous,bansaye2019extinction} for instance). The expectation of $\bar{K}_1$ thus gives information on the long time behaviour of the process $X$ in this case. Our function $H$ is in fact an extension of this expectation in the non-linear case. This link will be more explicit in the next section.

\section{Long time behaviour of the process} \label{section_class}

The long time behaviour of the process $X$ depends on the interplay between $g$, which tends to increase (resp. decrease) it when positive (resp. negative), $r$, which decreases it, and the fragmentation kernel $\kappa$ which has a less intuitive effect. 
It is also impacted by the random fluctuations of the large birth events.
We consider the following possibilities for the relative strengths of $g$ and $r$ (Local Slow/Fast Growth (LSG, LFG), Global Slow/Fast/Very Fast Growth (GSG, GFG, GVFG):
\begin{enumerate}[label =\bf{(LSG)}]
\item \label{B0} There exist $\eta>0$ and $x_0\geq 0$ such that 
\begin{equation*} 
 H(x) \leq - \eta, \quad \forall \ x > x_0.
\end{equation*}
\end{enumerate}
\begin{enumerate}[label =\bf{(LFG)}]
\item \label{LFG} There exist $\eta>0$ and $x_1\geq 0$ such that 
\begin{equation*} 
 H(x) \geq  \eta, \quad \forall \ x < x_1.
\end{equation*}
\end{enumerate}
\begin{enumerate}[label =\bf{(GSG)}]
\item \label{B1} There exist $\underline{r}>0$ and $\eta\geq 0$ 
such that $r(x) \geq \underline{r}$, $\forall \ x \geq 0$ and
\begin{equation*} 
 \frac{g(x)}{xr(x)} + \E\left[\ln \Theta \right] \leq - \eta, \quad \forall \ x > 0.
\end{equation*}
\end{enumerate}
\begin{enumerate}[label =\bf{(GFG)}]
\item \label{B2} There exist $\underline{r}>0$ and $\eta> 0$ such that
 $r(x) \geq \underline{r}$, $\forall \ x \geq 0$ and
\begin{equation*} 
  \frac{g(x)}{xr(x)} + \E\left[\ln \Theta \right] \geq  \eta, \quad \forall \ x > 0.
\end{equation*}
\end{enumerate}
\begin{enumerate}[label =\bf{(GVFG)}]

\item \label{B4} There exist $\underline{r}>0$ and $\eta\geq  0$ such that
 $r(x) \geq \underline{r}$, $\forall \ x \geq 0$ and
\begin{equation*} 
  \frac{g(x)}{xr(x)} + \E\left[\ln \Theta \right]- \frac{2\sigma^2(x)}{x^2r(x)}-
  \frac{p(x)}{r(x)}\int_{\R_+}\frac{z^2x^{-2}}{1+zx^{-1}}\pi(dz) \geq  \eta, \quad \forall \ x > 0.
\end{equation*}
\end{enumerate}

\begin{rem}\label{rem:cond-slowfast}
Let us make some remarks on these conditions
\begin{itemize}
\item[$\bullet$] Condition \ref{B0} is satisfied in particular if there exist $\eta,x_0>0$ such that
$$
 \frac{g(x)}{x} + r(x) \E\left[\ln \Theta \right] \leq - \eta, \quad \forall \ x > x_0,
$$
as $\ln(x+z)-\ln x-z/x\leq 0$ for all $x,z> 0$ by the Mean Value Theorem.
\item[$\bullet$] \ref{B4} implies \ref{B2}.
\item[$\bullet$] \ref{B4} implies \ref{A1}.
This follows from the fact that if $\mathcal{A}$ is non-empty, we can find $a \in \mathcal{A}$ 
such that the following inequality holds (see the proof on page \pageref{no_abs}):
\begin{align}
\frac{g(x)}{x}-a\frac{\sigma^2(x)}{x^2}-p(x)I_a(x)\geq 
 \frac{g(x)}{x} - \frac{2\sigma^2(x)}{x^2}-
  p(x)\int_{\R_+} \frac{z^2x^{-2}}{1+zx^{-1}}\pi(dz) .
  \label{rem_no_abs}
\end{align}
In particular, the process $X$ cannot reach $0$ under Assumption \ref{B4}.
\end{itemize}
\end{rem}

The next result states in particular that under Condition \ref{B0}, the division mechanism and the random fluctuations overcome the growth of $X$.
In this case, the process $X$ converges 
to a finite variable, which may be $0$ if $X$ can be absorbed.

\begin{thm}\label{th:convXt}
Suppose that Assumptions \ref{ass_PSS} holds.
\begin{itemize}
\item If Conditions \ref{A1}, \ref{SNinfty} hold and \ref{B0} or \ref{LFG} is satisfied, then, for all $x\geq 0$,
the process $(X_t,t\geq 0)$ converges in law as $t$ tends to infinity to $X_{\infty}$ satisfying
\begin{equation}\label{esp_Xinfty}
\E_x \left[g(X_\infty)- X_\infty r(X_\infty)\left(1-\E\left[\Theta\right]\right) \right]=0.
\end{equation}
Moreover, the distribution of $X_\infty$ is the unique stationary distribution of the process $X$ and for every bounded and measurable function $f$, 
almost surely, 
$$ \lim_{t \to \infty} \frac{1}{t} \int_0^t f(X_s)ds= \E[f(X_\infty)]. $$
\item If Condition \ref{SNinfty} holds, if there exist $\eps,x_0>0$ such that \ref{A2} holds for $x \leq \eps$ and \ref{B0} holds for $x \geq x_0$, and if $r>0$ on $[\eps \wedge e^{-1},x_0]$
then for all $x\geq 0$,
\begin{equation} \label{extin_B0A2}
\mathbb{P}_x\left(\exists t < \infty, X_t = 0\right) = 1. 
\end{equation}
\item If Condition \ref{A1} holds, if there exist $\eps,x_0>0$ such that \ref{LNinfty} holds for $x \geq 1/\eps$ and \ref{LFG} holds for $x \leq x_0$, and if $p>0$ on $[x_0,1/\eps]$,
then for all $x\geq 0$,
\begin{equation} \label{expl_A1LFG}
\mathbb{P}_x\left(\exists t < \infty, X_t = \infty\right) = 1. 
\end{equation}

\end{itemize}

\end{thm}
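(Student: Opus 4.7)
The plan is to handle each item separately, relying on the absorption and explosion results already stated together with a Foster--Lyapunov argument based on the function $\ln x$, for which a direct calculation from \eqref{inf_gene} gives $\mathcal{G}(\ln)(x)=H(x)$.

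For item i), \ref{A1} combined with Theorem~\ref{thm:absorption}(i), and \ref{SNinfty} combined with Theorem~\ref{thm:explosion}(i), force the process to remain in $(0,\infty)$ for all $t$ almost surely. Condition \ref{B0} gives $\mathcal{G}(\ln)\leq -\eta$ outside a compact set, and the symmetric condition \ref{LFG} applied to $-\ln$ gives $\mathcal{G}(-\ln)\leq -\eta$ on $(0,x_1]$; either produces a Foster--Lyapunov drift towards a compact subset of $(0,\infty)$. Invoking Harris recurrence / Meyn--Tweedie theory, with irreducibility coming from the diffusive and jump noise, one obtains a unique invariant probability measure $\mu$, convergence in law from every starting point, and the ergodic theorem stated. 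Equation~\eqref{esp_Xinfty} follows by applying Dynkin's formula to $f(x)=x$, noting that $\mathcal{G}(\mathrm{id})(x)=g(x)-xr(x)(1-\E[\Theta])$, stopping at $\tau^+(n)$ and passing to the limit in $n$ along the stationary distribution; the key verification is integrability of $g(X_\infty)$ and $X_\infty r(X_\infty)$ under $\mu$, which I would establish from the stationary relation $\int \mathcal{G}V\,d\mu=0$ applied to suitable $V$ and truncations.

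Items ii) and iii) are symmetric. For ii), the bound $\mathcal{G}(\ln)\leq -\eta$ for $x\geq x_0$ given by \ref{B0}, combined with non-explosion from \ref{SNinfty}, shows that $X$ hits $[0,x_0]$ in finite time almost surely. From $[\eps,x_0]$, the hypothesis $r>0$ on $[\eps\wedge e^{-1},x_0]$ provides a uniformly positive rate of catastrophes, and iterating these drives $X$ into $[0,\eps]$ with uniformly positive probability (using $\kappa((0,1-\delta))>0$ for some $\delta>0$; otherwise catastrophes would be trivial and \ref{B0} would need to be invoked directly on the full interval). Once in $[0,\eps]$, Theorem~\ref{thm:absorption}(ii) yields a uniformly positive probability of absorption at $0$. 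Chaining these three steps through the strong Markov property gives a constant $q>0$ such that the probability of extinction before the next return to $(x_0,\infty)$ is at least $q$, and a Borel--Cantelli argument along successive excursions then yields \eqref{extin_B0A2}. Item iii) follows by the mirror argument: $-\ln$ as Lyapunov function under \ref{LFG} forces $X$ to reach $[x_0,1/\eps]$ almost surely, the positivity of $p$ on $[x_0,1/\eps]$ allows a positive jump at uniformly positive rate to carry $X$ past $1/\eps$, and Theorem~\ref{thm:explosion}(ii) applied there concludes the argument by strong Markov and Borel--Cantelli.

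The main obstacle is the rigorous Foster--Lyapunov step: turning the infinitesimal drift bound into almost-sure finite hitting times of the target compact set, while ensuring that $\ln X_t$ (respectively $-\ln X_t$) remains controllable throughout. Since $\ln$ is unbounded at both endpoints the Dynkin martingale is only local, so one must localize at $\tau^-(\delta)\wedge\tau^+(n)$ and pass to the limit using \ref{A1} or \ref{SNinfty} to rule out accumulation of the martingale mass at the boundaries. For item i), the additional delicate ingredient is the verification of a petite-set / Döblin-type condition required by Meyn--Tweedie; this typically follows from the presence of the Brownian component or of a sufficiently spread-out jump kernel, but must be checked explicitly in the present nonlinear setting.
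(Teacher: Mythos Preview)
Your approach is essentially the paper's: it, too, applies It\^o/Dynkin to $\ln X$ to obtain $\mathcal{G}(\ln)=H$, proves that under \ref{B0} (resp.\ \ref{LFG}) the return times to $(0,x_0]$ (resp.\ $[x_1,\infty)$) have finite expectation, and then invokes a Harris-type ergodic theorem for item~i); for item~ii) it combines these returns with a uniform absorption estimate and the strong Markov property exactly as you outline, and item~iii) is the mirror image.

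One point worth flagging: when you write that ``Theorem~\ref{thm:absorption}(ii) yields a uniformly positive probability of absorption at $0$'' from $[0,\eps]$, note that the statement of Theorem~\ref{thm:absorption}(ii) only gives $\mathbb{P}_x(\tau^-(0)<\infty)>0$ pointwise, with no a~priori uniformity in $x$ and no control on the absorption time. The paper does not cite the statement here but goes back into the proof of Theorem~\ref{thm:absorption}(ii), extracting the explicit functions $\mathfrak{t}(\eps)$ and $\mathfrak{p}(\eps)$ such that $\mathbb{P}_z(\tau^-(0)\le\mathfrak{t}(\eps))\ge\mathfrak{p}(\eps)$ for all $z\in(\eps^{1+\delta},\eps^{1-\delta})$, and then checks that $\mathfrak{t}$ is non-decreasing and $\mathfrak{p}$ non-increasing to push this to all $z\le\eps^{1-\delta}$. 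This step is what makes the Borel--Cantelli argument go through with a fixed $\alpha>0$, so you should plan on reopening that proof rather than quoting the theorem.
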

The second point of this result generalizes \cite[Proposition 1.1]{BT11} to the case of more general parasites dynamics. Indeed, in \cite{BT11}, the authors considered the case $g(x) = g x$, $\sigma(x)^2 = \sigma^2 x$ and $p(x)\equiv 0$ for some $g,\sigma>0$. Note that in this case, if $r$ is constant, \ref{A2} and \ref{SNinfty} always hold and \ref{B0} reduces to $g+r\E[\ln \Theta]<0$, which is the condition stated in \cite[Proposition 1.1 i)]{BT11}. If $r$ is a non-increasing or non-decreasing function, we also retrieve the same conditions as in \cite[Proposition 1.1 ii)iii)]{BT11}. The function $H$ describes the strengths of the different mechanisms, so that Conditions \ref{B0} and \ref{LFG} determine the fate of the infection, depending on which mechanism overcomes the others at critical parasites concentrations (small or large).\\

Finally, we provide some properties on the long time behaviour of $X$ under Assumptions \ref{B1}, \ref{B2} and \ref{B4}, 
extending the classification for stable CSBPs with random catastrophes (corresponding to $r(x)=r$, $g(x)=gx$, $\sigma(x)=\sigma \sqrt{x}$ and $\pi \equiv 0$ or 
$\sigma \equiv 0$, $p(x)=x$ and $\pi$ stable). The first result extends \cite[Corollary 2]{BPS}.

\begin{prop} \label{3_behaviours}
Suppose that Assumption \ref{ass_PSS} is satisfied.
\begin{itemize}
 \item[i)] If Condition \ref{B1} holds for $\eta>0$, then 
 $$ \lim_{t \to \infty} X_t = 0,  \quad \text{almost surely}.$$
\item[ii)] If Condition \ref{B1} holds for $\eta=0$, then 
 $$ \liminf_{t \to \infty} X_t = 0,  \quad \text{almost surely}.$$
 \item[iii)] If Condition \ref{B2} holds, if there exists $\eps>0$ such that
 $$ \int_0^\infty z \ln^{1+\eps}(1+z) \pi(dz)<\infty $$
 and if the function $x \mapsto (\sigma^2(x)+p(x))/x$ is bounded, then 
$$ \P_x\left(\liminf_{t \to \infty} X_t>0\right)>0. $$
\end{itemize}
\end{prop}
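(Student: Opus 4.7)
The unifying tool is Itô's formula applied to $f(x)=\ln x$; by Proposition \ref{prop_sol_SDE}, up to explosion one has the semimartingale decomposition
\begin{equation*}
\ln X_t = \ln X_0 + \int_0^t H(X_s)\,ds + M_t,
\end{equation*}
where $H$ from Section \ref{sec:condH} is nothing but $\mathcal{G}f$ applied to $f=\ln$, and $M_t$ is a local martingale whose predictable bracket has three non-negative contributions (Brownian, positive jumps, catastrophes); in particular the catastrophe part equals $\int_0^t r(X_s)\E[(\ln\Theta)^2]\,ds$. The elementary inequality $I(x)\geq 0$ (from $\ln(1+u)\leq u$) together with $\sigma^2,p\geq 0$ shows that, under \ref{B1},
\begin{equation*}
H(x) \;\leq\; r(x)\!\left[\frac{g(x)}{xr(x)}+\E[\ln\Theta]\right] \;\leq\; -\eta\, r(x) \;\leq\; -\eta\underline{r}.
\end{equation*}

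For (i), I would combine the linear drift bound $\int_0^t H(X_s)\,ds\leq -\eta\underline{r}\,t$ with a strong law for $M$: localising at $\tau_n:=\tau^-(1/n)\wedge\tau^+(n)$, $\langle M\rangle$ is linearly controlled, the martingale SLLN yields $M_t/t\to 0$ and hence $\ln X_t/t\leq -\eta\underline{r}+o(1)$, so that $X_t\to 0$ a.s.\ on $\{\exists n:\tau_n=\infty\}$. The complementary event, where $X$ leaves every compact, is ruled out by a recurrence argument: the same downward drift forces only finitely many up-crossings of any fixed level. For (ii) the drift is merely non-positive so $\ln X_t-\ln X_0\leq M_t$, but since $r\geq\underline{r}$ and $\Theta$ is non-degenerate the catastrophe contribution alone forces $\langle M\rangle_\infty=\infty$, and the martingale law of the iterated logarithm gives $\liminf_{t\to\infty}M_t=-\infty$, whence $\liminf_{t\to\infty}X_t=0$ a.s.

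For (iii) the inequalities reverse. The assumption that $(\sigma^2(x)+p(x))/x$ is bounded implies $\sigma^2(x)/x^2\to 0$ as $x\to\infty$, and together with $\int z\ln^{1+\eps}(1+z)\pi(dz)<\infty$ it also implies $p(x)I(x)\to 0$, so \ref{B2} yields $H(x)\geq\eta\underline{r}/2$ for $x\geq x_0$ large enough. I would then construct a bounded decreasing Lyapunov function $V\in\mathcal{C}^2_b(\R_+)$ with $V(\infty)=0$ and $\mathcal{G}V\leq 0$ on $[x_0,\infty)$; a natural candidate is $V(x)=(1+x)^{-\alpha}$ for $\alpha>0$ small, where the $\log^{1+\eps}$--moment on $\pi$ is precisely what absorbs the positive-jump contribution to $\mathcal{G}V$ into the downward push coming from the catastrophe term $r(x)\E[(1+\Theta x)^{-\alpha}-(1+x)^{-\alpha}]$. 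Then $V(X_{t\wedge\tau^-(x_0)})$ is a non-negative supermartingale, and optional stopping yields $\P_x(\tau^-(x_0)<\infty)\leq V(x)/V(x_0)<1$ for $x$ large, from which $\P_x(\liminf_{t\to\infty}X_t>0)>0$ follows after extending to every starting point via the Markov property. The main technical obstacle is verifying $\mathcal{G}V\leq 0$ on $[x_0,\infty)$ under exactly the stated log-moment, which is what makes the threshold in the hypothesis sharp.
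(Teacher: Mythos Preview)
Your approach has genuine gaps in all three parts. For (i) and (ii), the martingale SLLN/LIL step requires control of $\langle M\rangle_t$, whose integrand contains $\sigma^2(X_s)/X_s^2$, $r(X_s)\E[(\ln\Theta)^2]$ and a $p(X_s)$--weighted jump term. Condition \ref{B1} imposes nothing on $\sigma$, $p$ or an upper bound on $r$, and the paper does not even assume $\E[(\ln\Theta)^2]<\infty$; so $\langle M\rangle_t$ need not grow at most linearly and your delocalisation argument (``only finitely many up-crossings'') is circular since it presupposes the drift dominates the fluctuations. The paper avoids this entirely: it time-changes by $\int_0^t r(X_s)\,ds$ so that the catastrophe rate of the new process $Y$ becomes constant, then writes $Z_t=Y_t e^{-K_t}$ with $K_t=\int_0^t g(Y_s)/(Y_sr(Y_s))\,ds+\sum\ln\Theta_i$. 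The point is that $Z$ is a non-negative local martingale---hence converges a.s.---with \emph{no} hypothesis on the bracket, while $K$ is now bounded above by a genuine L\'evy process with drift $-\eta$ (or $0$), so $e^{-K_t}\to\infty$ (resp.\ $\limsup e^{-K_t}=\infty$) forces $Y_t\to0$ (resp.\ $\liminf Y_t=0$).

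For (iii) your sign analysis is wrong: with $V(x)=(1+x)^{-\alpha}$ decreasing, catastrophes send $x\mapsto\theta x<x$, hence $V(\theta x)>V(x)$, so the catastrophe term in $\mathcal{G}V$ is \emph{positive}, not a ``downward push''. The only negative contribution is $g(x)V'(x)$, and balancing it against the catastrophe term requires $\E[\Theta^{-\alpha}]<\infty$ for some $\alpha>0$, which is not assumed (only $\E|\ln\Theta|<\infty$ is). The paper's argument is completely different: working with the time-changed process and conditioning on $K$, it sets up a backward ODE for $\tilde v_t(s,\lambda,K,Y)$ so that $\exp(-Z_s\tilde v_t(s))$ is a martingale, and then uses the bound $(\sigma^2(x)+p(x))/x\leq M$ to compare $\tilde v_t$ with the corresponding function for a CSBP in random environment, invoking \cite{BPS} to obtain $\P(W>0\mid K)>0$ for the a.s.\ limit $W$ of $Z$. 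The $\ln^{1+\eps}$-moment on $\pi$ enters through that comparison, not through a Lyapunov computation.
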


In the last case, we additionally prove in the next corollary that with positive probability, $X$ grows (at least) exponentially.
Moreover, when the diffusion term is large enough ($\sigma(x)$ larger than $\sqrt{x}$, which corresponds to Feller diffusion), we are 
able to provide a bound on the absorption rate in the two first cases.

\begin{cor}\label{iiimoreprecise}
Suppose that Assumption \ref{ass_PSS} is satisfied.
\begin{itemize}
 \item[i)] If Condition \ref{B1} holds for $\eta>0$, and $\inf_{x \geq 0}\sigma^2(x)/x>0$ then 
\begin{itemize}
  \item If $\E\left[(\Theta -1)\ln \Theta \right] < \eta $, then for any $x>0$
 $$ \P_x(X_t>0)=\mathcal{O}\left(e^{\underline{r}(\E[\ln 1/\Theta]-\eta-1/2)t}\right), \quad (t \to \infty).$$
  \item If $\E\left[(\Theta -1)\ln \Theta \right] = \eta $, then for any $x>0$
 $$ \P_x(X_t>0)=\mathcal{O}\left(t^{-1/2} e^{\underline{r}(\E[\ln 1/\Theta]-\eta-1/2)t}\right), \quad (t \to \infty).$$
  \item If $\E\left[(\Theta -1)\ln \Theta \right] > \eta $, then for any $x>0$
 $$ \P_x(X_t>0)=\mathcal{O}\left(t^{-3/2} e^{\underline{r}(\E[\ln 1/\Theta]-\eta+ \E\left[(\Theta^\tau -1)\right])t}\right), \quad (t \to \infty),$$
 where $\tau\in[0,1)$ is the unique value such that $ \E[\ln (1/\Theta)]-\eta+ \E\left[ \Theta^\tau \ln \Theta \right]=0 $.
 \end{itemize}
 \item[ii)] If Condition \ref{B1} holds for $\eta=0$, and $\inf_{x \geq 0}\sigma^2(x)/x>0$ then for any $x>0$
 $$ \P_x(X_t>0)=\mathcal{O}\left(t^{-1/2}\right), \quad (t \to \infty).$$
\item[iii)] 
 Under the assumptions of point iii) of Proposition \ref{3_behaviours}, there exists a stochastic process $(K_t, t\geq 0)$, larger than a L\'evy process with drift $\eta$, and a non-decreasing function $\rho$ such that $\rho(t) \geq \uline{r} t$ and 
\begin{equation} \label{ineqW} 
\lim_{t \to \infty} X_te^{-K_{\rho(t)}} = W 
\end{equation}
where $W$ is a finite non-negative random variable satisfying $\P(W>0)>0$.
\end{itemize}
\end{cor}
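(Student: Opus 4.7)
For parts i) and ii), the idea is to build supermartingales from the power Lyapunov functions $f_\tau(x)=x^\tau$ with $\tau\in(0,1]$ and to reduce the problem to a Feller diffusion in random environment, in the spirit of \cite{BPS}. Applying the generator \eqref{inf_gene} to $f_\tau$ yields
\begin{align*}
\frac{\mathcal{G} x^\tau}{x^\tau}
= \tau\frac{g(x)}{x}+\tau(\tau-1)\frac{\sigma^2(x)}{x^2}
+ r(x)\bigl(\E[\Theta^\tau]-1\bigr)
+ p(x)\int_{\R_+}\!\bigl((1+z/x)^\tau-1-\tau z/x\bigr)\pi(dz).
\end{align*}
Under \ref{B1}, the inequality $g(x)/x\le -r(x)(\eta+\E[\ln\Theta])$ lets me bound this above by $\underline{r}\,\varphi(\tau)$ with $\varphi(\tau):=\E[\Theta^\tau]-1-\tau(\eta+\E[\ln\Theta])$, after discarding the non-positive diffusion and positive-jump corrections. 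The three sub-regimes of part i) correspond exactly to the possible locations of the minimiser of $\varphi$ on $[0,1]$: the boundary $\tau=1$ with $\varphi'(1)<0$ in the subcritical case $\E[(\Theta-1)\ln\Theta]<\eta$, the boundary $\tau=1$ with $\varphi'(1)=0$ in the critical case, and the interior root of $\varphi'$ in the strongly subcritical case, which is exactly the value $\tau$ of the statement. The polynomial prefactors $t^{-1/2}$ and $t^{-3/2}$ and the additional $-1/2$ in the exponential rate are obtained by a Laplace-transform / saddle-point analysis of $\E_x[e^{-\lambda X_t}]$ after comparison with a Feller diffusion with random catastrophes; the Feller-type lower bound $\inf_x\sigma^2(x)/x>0$ is precisely what allows for such a domination and for transferring the explicit extinction asymptotics. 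Part ii) is the critical case $\eta=0$, giving the $t^{-1/2}$ decay characteristic of a critical Feller diffusion.

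For part iii), I apply Itô's formula to $\ln X_t$ (after a standard localisation that keeps $X$ away from $0$ and $\infty$) to obtain the semimartingale decomposition
\[
\ln X_t = \ln X_0 + \int_0^t H(X_s)\,ds + L_t + \mathcal{M}_t,
\]
where $L_t:=\sum_{i:T_i\le t}\ln\Theta_i$ collects the catastrophe contributions and $\mathcal{M}_t$ is the local martingale built from the Brownian integral and the compensated positive-jump integral. Introduce the random time change $\rho(t):=\int_0^t r(X_s)\,ds\ge\underline{r}\,t$; under it, the catastrophe stream becomes a rate-one Poisson process, and $\widetilde{L}_u := L_{\rho^{-1}(u)}$ is a compound-Poisson Lévy process with jumps distributed as $\ln\Theta$. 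Setting
\[
K_u := \int_0^u\!\left(\frac{g(X_{\rho^{-1}(v)})}{X_{\rho^{-1}(v)}\,r(X_{\rho^{-1}(v)})} + \E[\ln\Theta]\right)\!dv + \widetilde{L}_u,
\]
Assumption \ref{B2} implies $K_u\ge\eta u+\widetilde{L}_u$, the right-hand side being a Lévy process with linear drift $\eta$. A direct computation based on Itô's formula for semimartingales shows that $X_t e^{-K_{\rho(t)}}$ is a non-negative local martingale: the catastrophe jumps cancel exactly between $\ln X_t$ and $K_{\rho(t)}$, and the constant $\E[\ln\Theta]$ added to the integrand of $K$ is precisely what offsets the Itô correction when passing from $\ln X_t-K_{\rho(t)}$ to its exponential. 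The martingale convergence theorem then yields the almost-sure limit $W\in[0,\infty)$, and the inequality $\rho(t)\ge\underline{r}\,t$ gives the lower bound $\rho(t)\ge\underline{r}\,t$ of the statement.

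The main obstacle is the last assertion $\P_x(W>0)>0$: a non-negative local martingale is only a supermartingale in general, and $W$ could a priori vanish almost surely. My plan is to establish a uniform $L^{1+\delta}$-bound on $X_t e^{-K_{\rho(t)}}$ for some $\delta>0$, using the strengthened tail assumption $\int_0^\infty z\ln^{1+\varepsilon}(1+z)\pi(dz)<\infty$ to control the positive-jump contribution to $\mathcal{M}$ and to $K$, and the boundedness of $x\mapsto(\sigma^2(x)+p(x))/x$ to keep the quadratic variation of the Brownian part and the exponential moments of $\widetilde{L}$ tame even when $X$ approaches zero. Uniform integrability then upgrades $X_t e^{-K_{\rho(t)}}$ to a genuine martingale, so that $\E_x[W]=x>0$ by $L^1$-convergence, and $\P_x(W>0)>0$ follows immediately.
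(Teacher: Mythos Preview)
Your approach to part iii) up to the martingale convergence is essentially the same as the paper's: time-change by $\rho(t)=\int_0^t r(X_s)\,ds$, define $K$ so that $X_te^{-K_{\rho(t)}}$ is a non-negative local martingale, and invoke supermartingale convergence. (A minor point: the $\E[\ln\Theta]$ term you add to the drift of $K$ is not an It\^o correction---there is no correction at catastrophe jumps because $\Delta(\ln X-K)=0$---and the paper's $K$ simply contains the \emph{uncompensated} sum $\sum\ln\Theta_i$.)

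The real gap is your argument for $\P(W>0)>0$. You propose to show that $X_te^{-K_{\rho(t)}}$ is bounded in $L^{1+\delta}$, but the only moment hypothesis on the positive jumps is $\int_0^\infty z\ln^{1+\varepsilon}(1+z)\,\pi(dz)<\infty$; this is a logarithmic, not a power, moment, and does not yield $L^{1+\delta}$ control of the compensated jump martingale (BDG-type bounds would require $\int z^{1+\delta}\pi(dz)<\infty$). The paper avoids uniform integrability entirely: working with the time-changed process $Y$, it writes the conditional Laplace transform $\E_y[e^{-\lambda Z_t}\mid K]=e^{-y\tilde v_t(0,\lambda,K,Y)}$ where $\tilde v$ solves a backward equation driven by $\tilde\psi_0(\phi,x)=\frac{\sigma^2(x)}{xr(x)}\phi^2+\frac{p(x)}{xr(x)}\int(e^{-\phi z}-1+\phi z)\pi(dz)$. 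The boundedness of $x\mapsto(\sigma^2(x)+p(x))/x$ gives $\tilde\psi_0(\phi,x)\le\psi_0(\phi):=M(\phi^2+\int(e^{-\phi z}-1+\phi z)\pi(dz))$, hence $\tilde v_t\ge v_t$ with $v_t$ solving the dominated equation; letting $t\to\infty$ yields $\E_y[e^{-W}\mid K]\le e^{-y v_\infty(0,1,K)}<1$ by the results of \cite{BPS}. This is where the $z\ln^{1+\varepsilon}(1+z)$ assumption is actually used (it guarantees $v_\infty>0$), and no higher moments are needed.

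For parts i) and ii), your Lyapunov computation with $x^\tau$ correctly identifies the function $\varphi(\tau)=\E[\Theta^\tau]-1-\tau(\eta+\E[\ln\Theta])$ and its minimiser, and would give the exponential rate $\underline{r}\,\varphi(\tau^*)$; but it cannot produce the additional $-1/2$ in the exponent nor the $t^{-1/2}$, $t^{-3/2}$ prefactors, which come from fluctuations of the random environment and are invisible to a pure generator bound. The paper again uses the conditional Laplace transform of $Z_t$ and now bounds $\tilde\psi_0$ from \emph{below} by $\mathfrak a\phi^2$ (this is exactly where $\inf_x\sigma^2(x)/x>0$ enters), obtaining $\P_y(Y_t>0)\le 1-\E\bigl[e^{-y/(\mathfrak a\int_0^t e^{-K_u}du)}\bigr]$; the asymptotics of this exponential functional of the L\'evy-dominated process $K$ are then read off directly from \cite[Theorem~7]{BPS}, which supplies both the correct exponential rate and the polynomial corrections. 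Your sketch gestures at ``comparison with a Feller diffusion with random catastrophes'' but does not carry it out; the backward-equation comparison is the mechanism that makes this precise.
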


Absorption rates of CSBPs in random environment have been intensively studied during the 
last decade \cite{boeinghoff2011branching,BPS,palau2016asymptotic,li2018asymptotic,bansaye2019extinction}.
In these references, $g(x)=gx$ , $\sigma^2(x)= \sigma^2 x$, for some $\sigma \geq 0$, $p(x)=x$ and $r(x) \equiv r$ 
is independent of $X$, whereas these assumptions 
are relaxed in our case (notice however that we make moment assumptions on the jump measures). Corollary \ref{iiimoreprecise} 
thus provides bounds on the survival probability for a new class of processes.\\

Let us finally describe the long time behaviour of the process $X$ under Condition \ref{B4}.

\begin{prop} \label{3_behaviours_expl}
Suppose that Assumption \ref{ass_PSS} is satisfied.
\begin{itemize}
 \item[i)] If Condition \ref{B4} holds for $\eta>0$, then 
 $$ \lim_{t \to \infty} X_t = \infty,  \quad \text{almost surely}.$$
\item[ii)] If Condition \ref{B4} holds for $\eta=0$, then 
 $$ \limsup_{t \to \infty} X_t = \infty,  \quad \text{almost surely}.$$
\end{itemize}
\end{prop}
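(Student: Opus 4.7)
The approach for both parts is to study $\ln X_t$ via Itô's formula. Since Remark \ref{rem:cond-slowfast} shows that \ref{B4} implies \ref{A1}, the process never reaches $0$, so $\ln X_t$ is well defined. Using the localization $\tau_n := \tau^+(n) \wedge \tau^-(1/n)$ together with the generator computation $\mathcal{G}(\ln\cdot)(x) = H(x)$, one obtains
$$
\ln X_{t \wedge \tau_n} \;=\; \ln X_0 + \int_0^{t \wedge \tau_n} H(X_s)\,ds + M_{t \wedge \tau_n},
$$
where $M$ is a local martingale with three components: the continuous part $\sqrt{2}\int \sigma(X_s)/X_s\,dB_s$; a compensated Poisson integral for the positive jumps with integrand $\ln(1+z/X_{s-})$; and a compensated Poisson integral for the negative jumps with integrand $\ln\Theta$.

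The key algebraic step is the bound $H(x) \geq \sigma^2(x)/x^2 + \eta\, r(x) \geq \eta\,\underline{r}$ for all $x>0$, obtained from \ref{B4} together with the elementary inequality $u - \ln(1+u) \leq u^2/(1+u)$ for $u \geq 0$ (which bounds $I(x)$ from above by $\int z^2 x^{-2}/(1+z/x)\,\pi(dz)$). The drift of $\ln X_t$ is therefore everywhere nonnegative, and uniformly bounded below by $\eta\underline{r}$ when $\eta>0$.

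For (i) with $\eta>0$, the plan is to show that $M_t/t \to 0$ almost surely, so that $\ln X_t \geq \ln X_0 + \eta\underline{r}\,t + M_t \to +\infty$. The continuous and positive-jump components have brackets growing at most linearly in $t$ (using $\ln^2(1+u) \leq u^2/(1+u)$ and the standing assumption $\int(z\wedge z^2)\pi(dz)<\infty$, together with the uniform bound on $H$), so the classical strong law for local martingales handles them. The subtle piece is the negative-jump component, whose jumps $\ln\Theta$ are only assumed $L^1$-integrable; I would split $\ln\Theta = \ln\Theta\,\mathbf{1}_{\Theta\geq\epsilon} + \ln\Theta\,\mathbf{1}_{\Theta<\epsilon}$: the first part yields an $L^2$ local martingale to which the LLN applies, while the second is a compensated Poisson sum of arbitrarily small rate whose $L^1$-growth is controlled by a Borel--Cantelli/Kolmogorov argument and contributes at most $C\epsilon\,t$ in the limit; sending $\epsilon\to 0$ gives $M_t=o(t)$. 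For (ii) with $\eta=0$, the drift remains nonnegative, so if $\P\bigl(\limsup_t X_t \leq A\bigr) > 0$ for some $A<\infty$, then on that event $\ln X_t$ would be bounded above for large $t$, while the local martingale $M$ restricted to $\{X_s \leq A\}$ accumulates quadratic variation at rate bounded below by a strictly positive constant (using $\underline{r}\,\E[(\ln\Theta)^2\wedge 1]$ when $\kappa$ is nondegenerate, or contributions from the Brownian/positive-jump parts otherwise); by the oscillation theorem for local martingales with diverging bracket, $\limsup_t M_t = +\infty$ on that event, contradicting boundedness of $\ln X_t$. The main obstacle throughout is the control of the negative-jump martingale under only the $L^1$ assumption on $\ln\Theta$, which the truncation argument is designed to resolve.
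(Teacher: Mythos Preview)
Your decomposition $\ln X_t=\ln X_0+\int_0^t H(X_s)\,ds+M_t$ and the bound $H(x)\geq\sigma^2(x)/x^2+\eta\,r(x)$ are correct. The gap is in part (i): you assert that the martingale brackets grow at most linearly in $t$, invoking a ``uniform bound on $H$'' that does not exist under the hypotheses. Condition \ref{B4} gives only \emph{lower} bounds; nothing prevents $\sigma^2(x)/x^2$, $p(x)$ or $r(x)$ from being arbitrarily large for large $x$. Once $X$ spends time in such regions, the brackets of the Brownian, positive-jump and negative-jump parts of $M$ may all grow superlinearly, and the strong law $M_t/t\to0$ is no longer available. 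Your inequality $\sigma^2(x)/x^2\leq H(x)$ does yield $\langle M^{(1)}\rangle_t\leq 2\int_0^t H(X_s)\,ds=2(\ln X_t-\ln X_0-M_t)$, but this is circular; and there is no analogous control of $p(x)I(x)$ or of $r(x)$ in terms of $H(x)$. The truncation scheme you sketch for the $\ln\Theta$ part also implicitly uses that $r(X_s)$ stays bounded, which again is not assumed.

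The paper circumvents this entirely. It first applies the time change $X_t=Y_{\int_0^t r(X_s)\,ds}$ so that the catastrophe rate becomes identically $1$, then passes to the reciprocal $V_t:=1/Y_t$ (well defined by \ref{B4}$\Rightarrow$\ref{A1}). An It\^o computation shows that $\tilde Z_t:=V_t e^{-\tilde K_t}$ is a \emph{nonnegative local martingale}, hence a supermartingale, for a process $\tilde K$ whose drift is exactly the negative of the left-hand side of \ref{B4} and whose jump part is the compound Poisson process $-\int\!\!\int\!\!\int\ln\theta\,N(ds,dx,d\theta)$ at rate $1$. Under \ref{B4}, $\tilde K$ is dominated by a L\'evy process of mean $-\eta$, so $e^{-\tilde K_t}\to\infty$ (resp.\ $\limsup e^{-\tilde K_t}=\infty$ when $\eta=0$); since $\tilde Z_t\to\tilde W<\infty$ by supermartingale convergence, $V_t\to0$ (resp.\ $\liminf V_t=0$), and one undoes the time change. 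The supermartingale argument needs no bracket estimates and no moment assumption on $\ln\Theta$ beyond \eqref{cond_lntheta}, which is precisely what your direct approach lacks.
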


This result describes quantitatively how much the growth of the process has to overcome its fluctuations to drift to infinity.\\

The rest of the paper is dedicated to the proofs.

\section{Proofs} \label{section_proofs}

Using recent results on SDEs with jumps, we first prove that the class of processes we are interested in may be realized as unique pathwise solutions to SDEs.

\subsection{Proofs of Section \ref{section_model}}

\begin{proof}[Proof of Proposition \ref{prop_sol_SDE}]
 The proof is a direct application of Proposition 1 in \cite{palau2018branching}.
First according to their conditions (i) to (iv) on page 60, our parameters are admissible.
Second, we need to check that conditions (a), (b) and (c) in \cite{palau2018branching} are fulfilled.

In our case, condition (a) writes as follows: for any $n \in \N$, there exists $A_n<\infty$ such that for any $0 \leq x \leq n$,
$$ \int_0^\infty \int_0^1 \left| (\theta-1)x \mathbf{1}_{\{z \leq r(x)\}} \right| \kappa(d\theta)dz
= xr(x) \int_0^1 (1-\theta)\kappa(d\theta)
\leq A_n(1+x). $$
The function $r$ is continuous, and thus bounded on $[0,n]$. As a consequence, condition (a) holds.

To verify condition (b), it is enough to check that for any $n \in \N$ there exists $B_n<\infty$ such that for 
$0 \leq x\leq y\leq n$,
$$ |g(x)-g(y)|+ \int_0^\infty \int_0^1 (1-\theta) \left| x \mathbf{1}_{\{u \leq r(x)\}} - y \mathbf{1}_{\{u \leq r(y)\}} \right| \kappa(d\theta) du
\leq B_n\phi(y-x). $$
Indeed, the function $r_n: z \mapsto B_n \phi(z) $ on $\R_+$ is concave and non-decreasing and satisfies $ \int_{0^+}r_n^{-1}(z)dz = \infty$.
Now we have the following:
\begin{align*}
&\int_0^\infty \left| x \mathbf{1}_{\{u \leq r(x)\}} - y \mathbf{1}_{\{u \leq r(y)\}} \right|du \\
& \quad =  \int_0^\infty \left( (y-x) \mathbf{1}_{\{u \leq (r(x) \wedge r(y))\}} + y \mathbf{1}_{\{r(x)<u \leq r(y)\}} + x \mathbf{1}_{\{r(y)<u \leq r(x)\}}  \right)du\\
& \quad =  \ \mathbf{1}_{\{ r(x)<r(y) \}} (yr(y)-xr(x))+\mathbf{1}_{\{ r(y)\leq r(x) \}} (yr(y)+xr(x)-2xr(y))\\
& \quad \leq |yr(y)-xr(x)|+r(y)(y-x)+x|r(x)-r(y)|.
\end{align*}
But recall that a function that is locally Lipschitz on a compact interval is Lipschitz on this interval. Hence, $r$ is Lipschitz on $[0,n]$, and
condition (b) holds under Assumption \ref{ass_PSS}.

Finally, let us focus on condition (c).
First, as $p$ is non-decreasing, the function $x\mapsto x + z\mathbf{1}_{\lbrace u\leq p(x)\rbrace}$ is non-decreasing for all $(z,u)\in\mathbb{R}_+^2$. Second, the following inequality must be satisfied: for any $n \in \N$ there exists $D_n<\infty$ such that for 
$0 \leq x,y\leq n$,
$$ |\sigma(x)-\sigma(y)|^2+  \int_{\mathbb{R}_+^2} 
\left( \left|\mathbf{1}_{\{u \leq p(x)\}}z -\mathbf{1}_{\{u \leq p(y)\}}z \right|\wedge 
\left|\mathbf{1}_{\{u \leq p(x)\}}z -\mathbf{1}_{\lbrace u \leq p(y)\rbrace}z \right|^2\right)\pi(dz)du \leq D_n|x-y|. $$
The first term fulfills the condition as $\sigma$ is H\"older continuous with index $1/2$.
The second term is equal to 
$$  \int_{0}^\infty(z\wedge z^2)\pi(dz)
\int_{0}^\infty  \left|\mathbf{1}_{\{u \leq p(x)\}} -\mathbf{1}_{\{u \leq p(y)\}}\right| du  =
\left(\int_{0}^\infty(z\wedge z^2)\pi(dz)\right)|p(x) - p(y)|,  $$
and we conclude using again that $p$ is Lipschitz on $[0,n]$. Hence, condition (c) is satisfied.
We can thus conclude that Proposition 1 in \cite{palau2018branching}  applies, which in particular justifies that $X$ admits the infinitesimal generator given in \eqref{inf_gene}.
\end{proof}

\subsection{Proofs of Section \ref{section_abs_lineage}}\label{sec:proofsec3}

Let us first prove Remark \ref{rem_simpli}.

\begin{proof}[Proof of Remark \ref{rem_simpli}]
Under the assumptions of Remark \ref{rem_simpli}, the integral corresponding to the positive jumps is bounded in the neighborhood of $0$. To show that, we divide the integral into two parts. 
First
\begin{multline*} \limsup_{x\rightarrow 0^+} \left( p(x)x^{-2}\int_0^x z^2\left(\int_0^1 (1+zx^{-1}v)^{-1-a}(1-v)dv\right)\pi(dz) \right)
\\ \leq \limsup_{x\rightarrow 0^+} \left( p(x)x^{-1} \int_0^x \frac{z}{x} z\pi(dz) \right) \leq \left(\limsup_{x\rightarrow 0^+}p(x)x^{-1} \right)
\int_{\R_+}z \pi(dz)<\infty,\end{multline*}
where we used that $p$ is locally Lipschitz on $\R_+$ and $p(0)=0$ (Assumption \ref{ass_PSS}) which implies that $p$ is Lipschitz on 
$[0,1]$ and that $x\mapsto p(x)/x$ is bounded in the vicinity of $0$.

For the second part, first, note that for all $x>0$ and $z\in [x,\infty)$
\begin{align*}
\int_0^1 (1+zx^{-1}v)^{-1-a}(1-v)dv & \leq \int_0^{x/z} dv+\int_{x/z}^1 (1+zx^{-1}v)^{-1-a}dv\\
& = \frac{x}{z}+ \frac{x}{az} \left[ 2^{-a} -(1+zx^{-1})^{-a} \right]\leq \frac{x}{z}\left(1+\frac{2^{-a}}{a}\right).
\end{align*}
Then, 
\begin{multline*}
\limsup_{x\rightarrow 0^+} \left( p(x)x^{-2}\int_x^\infty z^2\left(\int_0^1 (1+zx^{-1}v)^{-1-a}(1-v)dv\right)\pi(dz) \right) \\
\leq \left(\limsup_{x\rightarrow 0^+}p(x)x^{-1} \right) \int_0^\infty z\left[1+ \frac{1}{a}  2^{-a} \right] \pi(dz)<\infty.
\end{multline*}
Therefore, if $\int_{\mathbb{R}_+}z\pi(dz)<\infty$, the part in $G_a$ corresponding to the positive jumps does not affect the boundedness of 
$G_a$ in the vicinity of $0$.
\end{proof}

We now prove Theorem \ref{thm:absorption}. As mentioned previously, the proof 
uses ideas of the proof of \cite[Theorem 2.3]{li2017general}.
However, as we extend this theorem, several steps of the proof have to be modified.
For the sake of readability we provide the whole proof, including parts which were done similarly in \cite{li2017general}. 
The proof relies on a martingale, whose construction is detailed in the next lemma. Recall the definitions of $\tau^\pm$ in Equations \eqref{tau1} and \eqref{tau2}.

\begin{lemma}\label{prop:martingale}
Suppose that Assumption \ref{ass_PSS} holds. For all $b>c>0$, let $T = \tau^-(c)\wedge \tau^+(b)$. Then, for all $a\in\mathcal{A}\cup (0,1)$, the process 
$$
Z^{(a)}_{t\wedge T}:=\left(X_{t\wedge T}\right)^{1-a}\exp\left(\int_0^{t\wedge T} G_a\left(X_s\right)ds\right)
$$ 
is a $\mathcal{F}_t$-martingale.
\end{lemma}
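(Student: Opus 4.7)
The plan is to apply Itô's formula to $f(x):=x^{1-a}$ along the process $X$ stopped at $T$, verify that the drift generated by $\mathcal{G}f$ is exactly compensated by the exponential factor $\exp(\int_0^\cdot G_a(X_s)ds)$, and then upgrade the resulting local martingale to a true martingale by a boundedness argument on $[0,T]$.

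First, although $f$ is not in $\mathcal{C}_b^2(\mathbb{R}_+)$, it is smooth on any compact interval $[c,b]\subset (0,\infty)$; up to time $T=\tau^-(c)\wedge \tau^+(b)$ the process $X$ stays in $[c,b]$, so one may either stop in Itô's formula or replace $f$ by a $\mathcal{C}_b^2$ function agreeing with $f$ on $[c/2,2b]$. Applying Itô's formula with jumps (using Proposition \ref{prop_sol_SDE}) to $f(X_{\cdot\wedge T})$ yields
\begin{align*}
X_{t\wedge T}^{1-a} = X_0^{1-a}+\int_0^{t\wedge T}\mathcal{G}f(X_s)\,ds + M_{t\wedge T},
\end{align*}
where $M$ is a local martingale coming from the Brownian term and the compensated jump measures $\widetilde{Q}$ and $\widetilde{N}$ (the compensator of $N$).

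The second step is the algebraic identity
\begin{align*}
\mathcal{G}f(x)+G_a(x)f(x)=0,\qquad x>0,
\end{align*}
which I would check term by term from \eqref{inf_gene}. The drift part gives $(1-a)g(x)x^{-a}$; the diffusion part gives $-a(1-a)\sigma^2(x)x^{-a-1}$; the catastrophe part contributes $r(x)x^{1-a}(\mathbb{E}[\Theta^{1-a}]-1)$; and the positive-jump part produces $p(x)x^{1-a}\int_{\mathbb{R}_+}\bigl((1+z/x)^{1-a}-1-(1-a)z/x\bigr)\pi(dz)$. Using Taylor's formula with integral remainder applied to $v\mapsto (1+(z/x)v)^{1-a}$,
\begin{align*}
(1+z/x)^{1-a}-1-(1-a)z/x = -a(1-a)(z/x)^2\int_0^1(1-v)(1+zx^{-1}v)^{-1-a}\,dv,
\end{align*}
so this last contribution equals $-(1-a)p(x)I_a(x)\,x^{1-a}$ by \eqref{def:Ia}. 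Summing the four contributions and factorising by $(a-1)x^{1-a}$ reproduces $-G_a(x)f(x)$ from \eqref{eq:Ga}.

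Finally, the integration-by-parts formula applied to the product of $f(X_{\cdot\wedge T})$ and the bounded-variation process $A_t:=\exp(\int_0^{t\wedge T}G_a(X_s)\,ds)$ gives
\begin{align*}
Z_{t\wedge T}^{(a)} = X_0^{1-a} + \int_0^{t\wedge T} A_s \bigl(\mathcal{G}f(X_s)+G_a(X_s)f(X_s)\bigr)\,ds + \int_0^{t\wedge T}A_s\,dM_s,
\end{align*}
and the Lebesgue integral vanishes by the identity above, so $Z^{(a)}_{\cdot\wedge T}$ is a local martingale. To promote it to a true martingale it suffices to observe that on $[0,T]$ one has $c\leq X_s\leq b$, so $X_s^{1-a}$ is bounded by $\max(c^{1-a},b^{1-a})$, and continuity of $g$, $\sigma^2$, $p$, $r$ together with dominated convergence ensure that $G_a$ is continuous on $[c,b]$, hence bounded; thus $Z^{(a)}_{t\wedge T}$ is uniformly bounded on $[0,t\wedge T]$ for every $t$, which yields the martingale property.

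The most delicate step is the Taylor-with-remainder computation of the positive-jump contribution: one must check that the integrand is genuinely $\pi$-integrable both near $0$ and at infinity (uniformly in $x\in[c,b]$) in order to exchange sum and integral in Itô's formula, which follows from $\int(z\wedge z^2)\pi(dz)<\infty$ combined with the explicit bound on $\int_0^1(1+yv)^{-1-a}(1-v)\,dv$ used in the proof of Remark \ref{rem_simpli}.
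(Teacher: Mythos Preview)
Your overall strategy---It\^o's formula for $x^{1-a}$, verification that the drift equals $-G_a(x)x^{1-a}$, integration by parts with the exponential factor, and then a domination argument---is exactly the paper's route. The algebraic check and the Taylor-remainder identification of the positive-jump term are carried out correctly.

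The gap is in the last step. You claim that on $[0,T]$ one has $c\le X_s\le b$, but this is only true for $s<T$: at the stopping time itself the process can jump out of $[c,b]$. If $T=\tau^-(c)$ is triggered by a catastrophe, then $X_T=\Theta X_{T^-}$ with $\Theta\sim\kappa$, which may be arbitrarily close to $0$; for $a\in\mathcal{A}$ (so $1-a<0$) this makes $X_T^{1-a}$ unbounded. Symmetrically, if $T=\tau^+(b)$ is caused by a positive jump of size $z$, then $X_T=X_{T^-}+z$ is unbounded, and for $a\in(0,1)$ so is $X_T^{1-a}$. Hence $Z^{(a)}_{\cdot\wedge T}$ is \emph{not} uniformly bounded, and your argument as written does not close.

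The paper handles this by proving instead that
\[
\mathbb{E}_{x}\Bigl[\sup_{s\le t}\,(X_{s\wedge T})^{1-a}\exp\!\Bigl(\int_0^{s\wedge T}G_a(X_u)\,du\Bigr)\Bigr]<\infty
\]
for every fixed $t$, and then invoking the standard criterion that a local martingale whose running supremum is integrable is a true martingale. The exponential factor is controlled exactly as you say (since $G_a$ is bounded on $[c,b]$ and only the values for $s<T$ enter the Lebesgue integral). For the power factor one uses that on $\{T=\tau^-(c)\}$ one has $X_T^{1-a}\le c^{1-a}\Theta^{1-a}$ with $\mathbb{E}[\Theta^{1-a}]<\infty$ precisely because $a\in\mathcal{A}$, while on $\{T=\tau^+(b)\}$ the overshoot is controlled through the moment condition $\int(z\wedge z^2)\,\pi(dz)<\infty$ and the bound on $p$ over $[c,b]$. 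Patching your proof with this integrability estimate (rather than a pointwise bound) recovers the paper's argument.
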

 
 \begin{proof}[Proof of Lemma \ref{prop:martingale}]
We follow the ideas of the proof of \cite[Lemma 5.1]{li2017general}. Let $a\in\mathcal{A} \cup (0,1)$. 
Applying It\^o's formula with jumps (see for instance \cite[Theorem 5.1]{ikeda1989}), we have for all $t\geq 0$
\begin{align*}
X_t^{1-a}  = & X_0^{1-a}+\int_0^t\left[(1-a)\frac{g(X_s)}{X_s}X_s^{1-a}-(1-a)aX_s^{-a-1}\sigma^2\left(X_s\right)\right]ds\\
& +\int_0^t \int_{\mathbb{R}_+} p(X_{s})((z+X_{s})^{1-a}-X_{s}^{1-a}-(1-a)zX_{s}^{-a})\pi(dz)ds\\
& + \int_0^t\int_0^{r(X_{s^-})}\int_0^1 \left(\theta^{1-a}-1\right)X_{s^-}^{1-a}N(ds,dx,d\theta)\\
& + (1-a) \int_0^t X_s^{1-a} \sqrt{2\sigma^2(X_s)}dB_s + \int_0^t \int_0^{p(X_{s^-})}\int_{\R_+} 
\left[ (X_{s^-}+z)^{1-a}-X_{s^-}^{1-a} \right]\widetilde{Q}(ds,dx,dz)\\
& =  X_0^{1-a}-\int_0^t X_s^{1-a}G_a(X_s)ds+ M_t,
\end{align*}
where $G_a$ has been defined in \eqref{eq:Ga} and $\left(M_t, t\geq 0\right)$ is a local martingale. For the last equality, we used formula \eqref{eq:Ia2} for $I_a$.
Next, using integration by parts we get
\begin{align*}
Z^{(a)}_{t\wedge T}  = & X_0^{1-a}+\int_0^{t\wedge T}G_a\left(X_s\right)Z_sds+\int_0^{t\wedge T}\exp\left(\int_0^{s} G_a\left(X_r\right)dr\right)dM_s - \int_0^{t\wedge T}G_a\left(X_s\right)Z_sds,
\end{align*}
so that $\left(Z^{(a)}_{t\wedge T}, t\geq 0 \right)$ is a local martingale. Similarly to \cite{li2017general}, we have
$$
\mathbb{E}_{\eps}\left[\sup_{s\leq t} \left(X_{s\wedge T}\right)^{1-a}\exp\left(\int_0^{s\wedge T} G_a\left(X_r\right)dr\right)\right]<\infty,
$$
using Assumptions \ref{ass_PSS}, so that from \cite[Theorem 51 p.38]{protter2005stochastic}, $\left(Z^{(a)}_{t\wedge T}, t\geq 0 \right)$ is a martingale.
\end{proof}

\begin{proof}[Proof of Theorem \ref{thm:absorption}]

We first focus on point {\it i)}.
Let $n \in \N$ be such that $n\geq 2$ and let $0<\varepsilon<b<1$ and $a\in\mathcal{A}$ be such that  \eqref{condi_noext} holds for all $u\leq b$. Let $T_n = \tau^-(\varepsilon^n)\wedge \tau^+(b)$. 
According to Lemma \ref{prop:martingale}, $Z_{t\wedge T_n}^{(a)}$ is an $\mathcal{F}_t$-martingale.
 As in \cite{li2017general},	 using Fatou's lemma, we have
\begin{align}\label{eq:first_esti_martingale}
 \mathbb{E}_{\varepsilon}\left[X_{T_n}^{1-a}\exp\left(\int_0^{T_n} G_a\left(X_s\right)ds\right)\right]\leq \lim_{t\rightarrow +\infty} \mathbb{E}_{\varepsilon}\left[X_{t\wedge  T_n}^{1-a}\exp\left(\int_0^{t\wedge T_n} G_a\left(X_s\right)ds\right)\right]=\varepsilon^{1-a}.
\end{align}
Next
\begin{align}\label{eq:epsi_1}
\mathbb{E}_{\varepsilon}\left[X_{T_n}^{1-a}\exp\left(\int_0^{T_n} G_a\left(X_s\right)ds\right)\right]\geq \mathbb{E}_{\varepsilon}\left[X_{T_n}^{1-a}\exp\left( -T_n\left|\inf_{x\in [\eps^n, b]}{G}_a(x)\right|\right)\mathbf{1}_{\lbrace\tau^-(\varepsilon^n)<\tau^+(b)\rbrace}\right].
\end{align}
We distinguish three cases.
\begin{enumerate}
 \item If 
 $$ 0 \leq \inf_{x\in (0,b]}G_a(x)<\infty,  $$
 then
 $$
\left|\frac{\inf_{x\in [\varepsilon^n,b]}G_a(x)}{{\ln(\eps^n)}}\right|\leq \frac{G_a(b)}{n |\ln \eps|} . 
$$
 \item If 
 $$ -\infty < \inf_{x\in (0,b]}G_a(x)< 0,  $$
 then
 $$
\left|\frac{\inf_{x\in [\varepsilon^n,b]}G_a(x)}{{\ln(\eps^n)}}\right|\leq \frac{\left|\inf_{x\in (0,b]}G_a(x)\right|}{n |\ln \eps|} . 
$$
 \item If 
 $$  \inf_{x\in (0,b]}G_a(x)=-\infty,  $$
 then there exists a sequence $(\alpha_n, n\in \N)$ converging to $0$ as $n$ goes to $\infty$ and such that $\eps^n \leq \alpha_n \leq b$, and
 $$
\left|\frac{\inf_{x\in [\varepsilon^n,b]}G_a(x)}{{\ln(\eps^n)}}\right|= \frac{\left|G_a(\alpha_n)\right|}{n |\ln \eps|}
\leq \left| \frac{G_a(\alpha_n)}{\ln \alpha_n}\right|. 
$$
\end{enumerate}
In the three cases, we obtain according to \ref{A1}, 
$$
\left|\frac{\inf_{x\in [\varepsilon^n,b]}G_a(x)}{{\ln(\eps^n)}}\right|\xrightarrow[n\rightarrow +\infty]{} 0. 
$$

Let 
 $$
d_n:=\left|\frac{\ln\left(\eps^{(a-1)n/2}\right)}{\inf_{x\in [\varepsilon^n,b]}G_a(x)}\right| = 
\frac{a-1}{2}\left|\frac{\ln\varepsilon^{n}}{\inf_{x\in [\varepsilon^n,b]}G_a(x)}\right| \xrightarrow[n\rightarrow +\infty]{} +\infty.
$$
As $a>1$, we have $X_{T_n}^{1-a}\mathbf{1}_{\lbrace\tau^-(\varepsilon^n)<\tau^+(b)\rbrace}\geq (\varepsilon^n)^{1-a}\mathbf{1}_{\lbrace\tau^-(\varepsilon^n)<\tau^+(b)\rbrace}$ . Then, we get from \eqref{eq:first_esti_martingale} and \eqref{eq:epsi_1},
\begin{align*}
 \varepsilon^{1-a} & \geq \left(\varepsilon^n\right)^{1-a}\mathbb{E}_{\varepsilon}\left[\exp\left(-d_n \left|\inf_{x\in [\varepsilon^n,b]}G_a(x)\right|\right)\mathbf{1}_{\lbrace\tau^-(\varepsilon^n)<\tau^+(b)\wedge d_n\rbrace} \right]\\
& = \left(\varepsilon^n\right)^{1-a}\E\left[\exp\left(\ln\left(\eps^{(a-1)n/2}\right)\right)\right]\mathbb{P}_{\varepsilon}\left(\tau^-(\varepsilon^n)<\tau^+(b)\wedge d_n\right).
\end{align*}
We thus obtain
\begin{align*}
\mathbb{P}_{\varepsilon}\left(\tau^-(\varepsilon^n)<\tau^+(b)\wedge d_n\right)\leq \varepsilon^{(a-1)(n/2-1)}.
\end{align*}
By the Borel-Cantelli Lemma, we have
\begin{align}\label{eq:res_borel_cantelli}
\mathbb{P}_{\varepsilon}\left(\tau^-(\varepsilon^n)<\tau^+(b)\wedge d_n \quad \text{i.o.}\right)=0, 
\end{align}
where i.o. stands for infinitely often.
As a consequence we get that, $\mathbb{P}_{\varepsilon}$-a.s., 
$$
\tau^-(\varepsilon^n)\geq\tau^+(b)\wedge d_n
$$
for $n$ large enough. If there are infinitely many $n$ so that
\begin{align}\label{eq:maj_taun}
\tau^-(\varepsilon^n)\geq d_n,
\end{align}
then we have $\tau^-(0)=\infty$. If \eqref{eq:maj_taun} holds for at most finitely many $n$, 
then by \eqref{eq:res_borel_cantelli}, we have $\tau^-(\varepsilon^n)> \tau^+(b)$ for all $n$ large enough. 
We conclude that for all $0<\varepsilon<b$,
\begin{align}\label{eq:dichotomie}
\mathbb{P}_{\varepsilon}\left(\tau^-(0) =\infty\text{ or }\tau^+(b)<\tau^-(0)\right)=1.
\end{align}

We will now use a coupling to show that $\P_\eps(\tau^-(0)<\infty)=0$. 
Let for $N \in \N$,
$$ r_{[0,N]}:= \sup_{0 \leq x \leq N} r(x), $$
which is finite as $r$ is a continuous function.
Let
$\tX$ be the unique strong solution to
\begin{align*}\label{eq:coupling}\nonumber
\tX_t  = & \tX_0 + \int_0^t g(\tX_s) ds+ \int_0^t \sqrt{2 \sigma^2(\tX_s) }dB_s 
+ \int_0^t \int_0^{p(\tX_{s^-})}\int_{\mathbb{R}_+}z\widetilde{Q}(ds,dx,dz)
\\ &+ \int_0^t \int_0^{r_{[0,N]}} \int_0^1  (\theta-1)\tX_{s^-}N(ds,dx,d\theta),
\end{align*}
where the Brownian motion $B$ and the Poisson random measures $\widetilde{Q}$ and $N$ are the same as in \eqref{eq:EDS}. 
We will use four properties of this equation. 
\begin{itemize}
 \item[a)] It has a unique strong solution according to Proposition \ref{prop_sol_SDE}.
 \item[b)] If $\tX^{(1)}$ and $\tX^{(2)}$ are two solutions with $\tX^{(1)}_0 \leq \tX^{(2)}_0$, then
 $\tX^{(1)}_t \leq \tX^{(2)}_t$ for any positive $t$.
 \item[c)] If $\tX$ is a solution with $\tX_0= X_0$, then $\tX_t\leq X_t$ for any $t$ smaller than $\tau^-(0) \wedge \tau^+(N) $.
 \item[d)] Equation \eqref{eq:dichotomie} holds for both $X$ and $\tX$.
\end{itemize}

Our aim now is to prove that 
\begin{equation} \label{concltX}
 \P_\eps\left( \ttau^-(0)<\infty \right)=0,
\end{equation}
where the $\ttau$'s are defined as the $\tau$'s in \eqref{tau1} and \eqref{tau2} but for the process $\widetilde{X}$.
Using the coupling described in point c), it will imply that
$$  \P_\eps\left( \tau^+(N) \leq \tau^-(0) \right)=1,
$$
and letting $N$ tend to infinity, we will get
$$  \P_\eps\left( \tau^-(0) =\infty \right)=1.
 $$

Before proceeding to the proof of \eqref{concltX}, let us notice that from coupling b) we have:
\begin{equation} \label{bleqbeta}
 \mathbb{E}_{\mathfrak{b}}\left[e^{-\lambda\ttau^-(\varepsilon)}\mathbf{1}_{\lbrace \ttau^-(\varepsilon)<\infty\rbrace}\right] 
 \leq \mathbb{E}_{b}\left[e^{-\lambda\ttau^-(\varepsilon)}\mathbf{1}_{\lbrace \ttau^-(\varepsilon)<\infty\rbrace}\right] 
 \quad \forall \ b \leq \mathfrak{b}.
\end{equation}
Now the strategy to prove \eqref{concltX} will be to show that for any $\lambda > 0 $
$$ \mathfrak{A}(\lambda,\eps) := \int_0^1 \mathbb{E}_{\theta \eps}\left[e^{-\lambda\ttau^-(0)}\mathbf{1}_{\lbrace \ttau^-(0)< \infty\rbrace}\right] \kappa(d\theta) = 0.$$

For any $0<\theta \leq 1$, \eqref{eq:dichotomie} yields
\begin{align*}
\mathbb{E}_{\theta \eps}\left[e^{-\lambda\ttau^-(0)}\mathbf{1}_{\lbrace \ttau^-(0)< \infty\rbrace}\right]&  = 
\mathbb{E}_{\theta \eps}\left[e^{-\lambda\ttau^-(0)}\mathbf{1}_{\lbrace \ttau^+(b)<\ttau^-(0)<\infty\rbrace}\right]\\
&\leq \mathbb{E}_{\theta \varepsilon}\left[e^{-\lambda\ttau^+(b)}\mathbf{1}_{\lbrace \ttau^+(b)<\ttau^-(0)\rbrace}\right]\mathbb{E}_{b}\left[e^{-\lambda\ttau^-(0)}\mathbf{1}_{\lbrace \ttau^-(0)<\infty\rbrace}\right],
\end{align*}
where the last inequality comes from the Markov property combined with \eqref{bleqbeta}. Moreover, using again the Markov property, we have
\begin{align*}
\mathbb{E}_{b}\left[e^{-\lambda\ttau^-(0)}\mathbf{1}_{\lbrace \ttau^-(0)<\infty\rbrace}\right] & =
\mathbb{E}_{b}\left[e^{-\lambda\ttau^-(\varepsilon)}\mathbf{1}_{\lbrace \ttau^-(\varepsilon)<\infty\rbrace}\mathbb{E}_{X_{\ttau^-(\varepsilon)}}\left[e^{-\lambda\ttau^-(0)}\mathbf{1}_{\lbrace \ttau^-(0)<\infty\rbrace}\right]\right].
\end{align*}
The process can cross the level $\varepsilon$ either because of the diffusion or because of a negative jump. 
In both cases, $X_{\ttau^-(\varepsilon)}\geq \eps \Theta $ almost surely, 
 where we recall that $\Theta$ is a random variable distributed according to $\kappa$ and independent of the process before time $\ttau^-(\varepsilon)$. Then, using again \eqref{bleqbeta},

\begin{align*}
\mathbb{E}_{b}\left[e^{-\lambda\ttau^-(0)}\mathbf{1}_{\lbrace \ttau^-(0)<\infty\rbrace}\right] & \leq
\mathbb{E}_{b}\left[e^{-\lambda\ttau^-(\varepsilon)}\mathbf{1}_{\lbrace \ttau^-(\varepsilon)<\infty\rbrace}\right]
 \int_0^1 \mathbb{E}_{\theta \eps}\left[e^{-\lambda\ttau^-(0)}\mathbf{1}_{\lbrace \ttau^-(0)< \infty\rbrace}\right] \kappa(d\theta) .
\end{align*}

We thus get
\begin{align*}
\mathfrak{A}(\lambda,\eps)\leq &  \mathbb{E}_{b}\left[e^{-\lambda\ttau^-(\varepsilon)}\mathbf{1}_{\lbrace \ttau^-(\varepsilon)<\infty\rbrace}\right]
\mathfrak{A}(\lambda,\eps).
\end{align*}

As 
$$  \mathbb{E}_{b}\left[e^{-\lambda\ttau^-(\varepsilon)}\mathbf{1}_{\lbrace \ttau^-(\varepsilon)<\infty\rbrace}\right]<1, $$
we conclude that $\mathfrak{A}(\lambda,\eps)= 0,$ which ends the proof of point {\it i)}.\\

Let us now focus on point {\it ii)}.
Let $\delta<(3-2a)^{-1}$ and $\eps>0$ such that 
\begin{equation} \label{epspetit} \varepsilon < e^{-(1-\delta)^{-1}}\wedge\left(\left(\frac{\ln 2}{2}\right)^{1/\delta(1-a)}\left( \frac{1}{\E[\Theta^{1-a}]^{-1} +1} \right)^{1/\delta(1-a)}\right).
\end{equation}
Let $T = \tau^-(\varepsilon^{1+\delta})\wedge \tau^+(\varepsilon^{1-\delta})$. Finally, let $0<a<1$ and $\eta>0$ be such that Condition \ref{A2} is satisfied. Then, as in the proof of point {\it i)}, we have for all $z>0$ such that $\varepsilon^{1+\delta}<z < \varepsilon^{1-\delta}$,
\begin{align*}
 z^{1-a} & \geq \mathbb{E}_{z}\left[X_{ \tau^+(\varepsilon^{1-\delta})}^{1-a}\exp\left(\int_0^{ \tau^+(\varepsilon^{1-\delta})} G_a\left(X_u\right)du\right)\mathbf{1}_{\lbrace \tau^+(\varepsilon^{1-\delta})< \tau^-(\varepsilon^{1+\delta})\rbrace}\right]\\
& \geq \varepsilon^{(1-\delta)(1-a)} \mathbb{P}_{z}\left(\tau^+(\varepsilon^{1-\delta})< \tau^-(\varepsilon^{1+\delta})\right),
\end{align*}
where we have used that if \eqref{condi_ext} holds, then $G_a(z)\geq 0$ for $z<\eps^{1-\delta}<e^{-1}$.
Therefore,
\begin{align}\label{eq:tempsarret1}
\mathbb{P}_{z}\left(\tau^+(\varepsilon^{1-\delta})< \tau^-(\varepsilon^{1+\delta})\right) \leq  \varepsilon^{(\delta-1)(1-a)}z^{1-a}. 
\end{align}
Similarly, for every $t\geq 0$
\begin{align*}
 z^{1-a} & \geq \mathbb{E}_{z}\left[X_t^{1-a}\exp\left(\int_0^{ t} G_a\left(X_s\right)ds\right)\mathbf{1}_{\lbrace \tau^+(\varepsilon^{1-\delta})= 
 \tau^-(\varepsilon^{1+\delta})=\infty\rbrace}\right]\\
 & \geq \eps^{(1+\delta)(1-a)} e^{t \ln (\eps^{-(1-\delta)})\ln(\ln(\eps^{-(1-\delta)}))^{1+\eta}} \mathbb{P}_{z}\left(\tau^+(\varepsilon^{1-\delta})= \tau^-(\varepsilon^{1+\delta})=\infty\right) ,
\end{align*}
so that
\begin{align*}
 \mathbb{P}_{z}\left(\tau^+(\varepsilon^{1-\delta})= \tau^-(\varepsilon^{1+\delta})=\infty\right) \leq z^{1-a} \varepsilon^{-(1+\delta)(1-a)}\exp\left(-t\ln(\varepsilon^{-(1-\delta)})\ln(\ln(\eps^{-(1-\delta)}))^{1+\eta}\right).
\end{align*}
Letting $t$ tend to infinity yields
\begin{align}\label{eq:tempsarret2}
 \mathbb{P}_{z}\left(\tau^+(\varepsilon^{1-\delta})= \tau^-(\varepsilon^{1+\delta})=\infty\right) = 0.
\end{align}

Let 
\begin{equation} t(\eps): = \left(\ln (\ln(\eps^{-(1-\delta)}))\right)^{-1-\eta} .
\label{teps}\end{equation}
 We have, using \eqref{condi_ext},
\begin{align*}
 z^{1-a}  \geq & \mathbb{E}_{z}\left[X_{ \tau^-(\varepsilon^{1+\delta})}^{1-a}\exp\left(\int_0^{ \tau^-(\varepsilon^{1+\delta})} G_a\left(X_u\right)du\right)\mathbf{1}_{\lbrace t(\varepsilon)<\tau^-(\varepsilon^{1+\delta})< \tau^+(\varepsilon^{1-\delta})\rbrace}\right]\\
 \geq & \exp\left[\ln(\eps^{-(1-\delta)})\left(\ln (\ln(\eps^{-(1-\delta)}))\right)^{1+\eta} t(\varepsilon)\right]
 \mathbb{E}_{z}\left[\left(X_{ \tau^-(\varepsilon^{1+\delta})}\right)^{1-a}
 \mathbf{1}_{\lbrace t(\varepsilon)<\tau^-(\varepsilon^{1+\delta})< \tau^+(\varepsilon^{1-\delta})\rbrace}\right]\\
= & \varepsilon^{-(1-\delta)}
 \mathbb{E}_{z}\left[X_{ \tau^-(\varepsilon^{1+\delta})}^{1-a}
 \mathbf{1}_{\lbrace t(\varepsilon)<\tau^-(\varepsilon^{1+\delta})< \tau^+(\varepsilon^{1-\delta})\rbrace}\right]\\
\geq & \varepsilon^{-(1-\delta)}
 \varepsilon^{(1+\delta)(1-a)} \E[\Theta^{1-a}] \mathbb{P}_{z}\left(
  t(\varepsilon)<\tau^-(\varepsilon^{1+\delta})< \tau^+(\varepsilon^{1-\delta})\right),
\end{align*}
where we used as before that for all $y\geq 0$, $X_{\tau^-(y)}\geq y\Theta$ almost surely where $\Theta$ is a random variable distributed according to $\kappa$ 
independent of the process before time $\tau^-(y)$.
We deduce,
\begin{align}\label{eq:tempsarret3}
\mathbb{P}_{z}\left( t(\varepsilon)<\tau^-(\varepsilon^{1+\delta})< \tau^+(\varepsilon^{1-\delta})\right) \leq \E[\Theta^{1-a}]^{-1}\eps^{a+(a-2)\delta}z^{1-a}.
\end{align}
Combining \eqref{eq:tempsarret1}, \eqref{eq:tempsarret2} and \eqref{eq:tempsarret3}, we get for all $z>0$ such that $\varepsilon^{1+\delta}<z < \varepsilon^{1-\delta}$,
\begin{align}\nonumber\label{eq:maj_proba}
\mathbb{P}_{z}\left(\tau^-(\varepsilon^{1+\delta})> t(\varepsilon)\right) 
& \leq \E[\Theta^{1-a}]^{-1}\eps^{a+(a-2)\delta}z^{1-a}+ \eps^{(\delta-1)(1-a)}z^{1-a}\\
& = \eps^{(\delta-1)(1-a)}z^{1-a}\left( \E[\Theta^{1-a}]^{-1}\eps^{1-\delta(3-2a)} +1\right) \nonumber \\
& \leq \left( \E[\Theta^{1-a}]^{-1} +1\right)\left(\eps^{(\delta-1)}z\right)^{1-a},
\end{align}
as by assumption $\delta$ is smaller than $(3-2a)^{-1}$.
By the strong Markov property,
\begin{align}\label{eq:prod}
& \mathbb{P}_{z}\left(\bigcap_{n=0}^{m}\left\lbrace \tau^-(\varepsilon^{(1+\delta)^n})<\infty,\tau^-(\varepsilon^{(1+\delta)^{n+1}})\circ\theta_{\tau^-(\varepsilon^{(1+\delta)^n})}\leq t(\varepsilon^{(1+\delta)^n})\right\rbrace\right)\\
& = \mathbb{E}_{z}\left[\prod_{n=0}^m \mathbb{P}_{X_{\tau^-\left(\eps^{(1+\delta)^{n}}\right)}}\left(\tau^-(\varepsilon^{(1+\delta)^{n+1}})\leq t(\varepsilon^{(1+\delta)^n})\right)\right],\nonumber
\end{align}
where $\theta_s:\mathbb{D}(\mathbb{R}_+,\mathbb{R}_+)\rightarrow\mathbb{D}(\mathbb{R}_+,\mathbb{R}_+)$ is the shift operator $(\theta_sX)(t) = X(s+t)$.

There are two possibilities. If 
 $$ X_{\tau^-(\varepsilon^{(1+\delta)^{n}})}>\eps^{(1+\delta)^{n+1}}, $$
then we can apply \eqref{eq:maj_proba} with $\eps^{(1+\delta)^n}$ instead of $\eps$, and we get
\begin{align*}
\mathbb{P}_{X_{\tau^-(\varepsilon^{(1+\delta)^{n}})}}\left(\tau^-(\varepsilon^{(1+\delta)^{n+1}})\leq t(\varepsilon^{(1+\delta)^n})\right) 
&\geq 1- \left( \E[\Theta^{1-a}]^{-1} +1\right)\left(\eps^{(\delta-1)(1+\delta)^n}X_{\tau^-(\varepsilon^{(1+\delta)^{n}})}\right)^{1-a}\\
&\geq 1- \left( \E[\Theta^{1-a}]^{-1} +1\right)\left(\eps^{(\delta-1)(1+\delta)^n}\varepsilon^{(1+\delta)^{n}}\right)^{1-a}\\
& = 1- \left( \E[\Theta^{1-a}]^{-1} +1\right)\left(\eps^{\delta(1+\delta)^n}\right)^{1-a}.
\end{align*}
Else if
 $$ X_{\tau^-(\varepsilon^{(1+\delta)^{n}})}\leq \eps^{(1+\delta)^{n+1}}, $$
then
\begin{align*}
\mathbb{P}_{X_{\tau^-(\varepsilon^{(1+\delta)^{n}})}}\left(\tau^-(\varepsilon^{(1+\delta)^{n+1}})\leq t(\varepsilon^{(1+\delta)^n})\right)  = 1\geq 1- \left( \E[\Theta^{1-a}]^{-1} +1\right)\left(\eps^{\delta(1+\delta)^n}\right)^{1-a}.
\end{align*}
Combining this inequality with \eqref{eq:prod}, we thus obtain
\begin{align}\nonumber\label{eq:1}
& \mathbb{P}_{z}\left(\bigcap_{n=0}^{m}\lbrace \tau^-(\varepsilon^{(1+\delta)^n})<\infty,\tau^-(\varepsilon^{(1+\delta)^{n+1}})\circ\theta_{\tau^-(\varepsilon^{(1+\delta)^n)})}\leq t(\varepsilon^{(1+\delta)^n})\rbrace\right)\\\nonumber
& \geq \prod_{n=0}^m\left(1- \left( \E[\Theta^{1-a}]^{-1} +1\right)\varepsilon^{(1-a)\delta(1+\delta)^n}\right)\\
& \geq \prod_{n=0}^me^{-2\left( \E[\Theta^{1-a}]^{-1} +1\right)\varepsilon^{(1-a)\delta(1+\delta)^n}}
 = e^{-2\left( \E[\Theta^{1-a}]^{-1} +1\right)\sum_{n=0}^m\varepsilon^{(1-a)\delta(1+\delta)^n}}
\end{align}
where for the last inequality, we used 
\begin{equation*} \varepsilon < \left(\frac{\ln 2}{2}\right)^{1/\delta(1-a)}\left( \frac{1}{\E[\Theta^{1-a}]^{-1} +1} \right)^{1/\delta(1-a)},
\end{equation*}
and that $x\mapsto 1-x-e^{-2x}$ is positive for $0<x\leq (\ln 2)/2$. Next,
\begin{align}\label{eq:2}
\sum_{n=0}^m\varepsilon^{(1-a)\delta(1+\delta)^n}=\varepsilon^{(1-a)\delta}\sum_{n=0}^m\varepsilon^{(1-a)\delta((1+\delta)^n-1)}\leq \varepsilon^{(1-a)\delta}\sum_{n=0}^m\varepsilon^{(1-a)\delta^2n}\leq \frac{\varepsilon^{(1-a)\delta}}{1-\varepsilon^{(1-a)\delta^2}}.
\end{align}
Combining \eqref{eq:1} and \eqref{eq:2} and letting $m\rightarrow \infty$, we get by monotone convergence
\begin{align*}
& \mathbb{P}_{z}\left(\bigcap_{n=0}^{\infty}\left\lbrace \tau^-(\varepsilon^{(1+\delta)^n})<\infty,\tau^-(\varepsilon^{(1+\delta)^{n+1}})\circ\theta_{\tau^-(\varepsilon^{(1+\delta)^n)})}\leq t(\varepsilon^{(1+\delta)^n})\right\rbrace\right)\\
&\geq e^{-2\left( \E[\Theta^{1-a}]^{-1} +1\right)\varepsilon^{(1-a)\delta}(1-\varepsilon^{(1-a)\delta^2})^{-1}}.
\end{align*}
Since under $\mathbb{P}_z$, 
\begin{align*}
\tau^-(0) = \sum_{n=0}^\infty \tau^-(\varepsilon^{(1+\delta)^{n+1}})\circ\theta_{\tau^-(\varepsilon^{(1+\delta)^n})},
\end{align*}
then
\label{pageineq}
\begin{align*}
\mathbb{P}_z\left(\tau^-(0)\leq \sum_{n=0}^\infty t(\varepsilon^{(1+\delta)^n})\right)\geq e^{-2\left( \E[\Theta^{1-a}]^{-1} +1\right)\varepsilon^{(1-a)\delta}(1-\varepsilon^{(1-a)\delta^2})^{-1}}.
\end{align*}
Notice that for $\varepsilon_n:= \varepsilon^{(1+\delta)^n}$,
$$ t(\eps_n)= \left( \ln(\ln (\eps^{-(1-\delta)(1+\delta)^n})) \right)^{-(1+\eta)}
= \left( n \ln (1+\delta)+ \ln (1-\delta) + \ln(\ln (\eps^{-1}) \right)^{-(1+\eta)}.$$
In particular, for large $n$, 
$$ t(\eps_n) \sim \left( n \ln (1+\delta) \right)^{-(1+\eta)}. $$
This ensures that
\begin{align*}
\sum_{n=1}^\infty t(\varepsilon_n) <\infty.
\end{align*}
We thus have
\begin{align*} 
\mathbb{P}_z\left(\tau^-(0)-<\infty\right)\geq e^{-2\left( \E[\Theta^{1-a}]^{-1} +1\right)\varepsilon^{(1-a)\delta}(1-\varepsilon^{(1-a)\delta^2})^{-1}}>0.
\end{align*}
This ends the proof of point {\it ii)}.

We now prove point {\it iii).}
Assume that for any positive $x$, $r(x)>0$. Let $x_0> 0$ be such that 
$$ \P_{y}(\tau^-(0)<\infty)>0, \quad \forall y \leq x_0. $$
Let $y> x_0$. 
By continuity, we can define:
$$ \sup_{x \leq 2y}p(x)=:\overline{p}<\infty, \quad \sup_{x \leq 2y}\sigma(x)=:\overline{\sigma}<\infty, $$
$$ \sup_{x \leq 2y}g(x)=:\overline{g}<\infty \quad \text{and} \quad \inf_{x \leq 2y}r(x)=:\underline{r}>0. $$
Moreover, there exists $\nu <1$ such that $\kappa([0,\nu])=:\nu_\kappa >0$, and we can take $N_{\kappa} \in \N$ such that 
\begin{equation} \label{descente} 2y\nu_\kappa^{N_\kappa}\leq x_0 .\end{equation}
Notice that by the Dubins–Schwarz Theorem \cite[Theorem 42 p.88]{protter2005stochastic} and the reflection principle, for any $t>0$ and $A\in \mathbb{R}$, 
\begin{align*}
\P_y \left( \sup_{s \leq t \wedge \tau^+(2y)} \int_0^s \sqrt{2\sigma^2(X_u)}dB_u >A \right) &=
2\P_y \left(W_{\int_0^{t \wedge \tau^+(2y)} 2\sigma^2(X_u)du}>A \right)\\
&= 2 \P_y \left(W_1>\frac{A}{\sqrt{{\int_0^{t \wedge \tau^+(2y)} 2\sigma^2(X_u)du}}} \right)\\
&\leq 2 \P_y \left(W_1>\frac{A}{\sqrt{2t\overline{\sigma}^2}} \right).
\end{align*}
where $W$ is a standard Brownian motion.
Finally, let $J(t,y)$ denote the event of having no positive jumps during the time interval $[0,t \wedge \tau^+(2y)]$. Its probability is larger than $e^{-t\overline{p}}$. We denote by $\overline{J(t,y)}$ the complementary event.
Then, we have for all $t,y>0$,
\begin{align*}
\P_y(\tau^+(2y)\leq t)& = \P_y\left(X_{ t \wedge \tau^+(2y)}>2y,\ \overline{J(t,y)}\right) + \P_y\left(X_{ t \wedge \tau^+(2y)}>2y,\ J(t,y)\right)\\
&\leq \P_y\left(\overline{J(t,y)}\right) + \P_y\left(\overline{g}t+\sup_{0\leq s\leq t}\int_0^s \sqrt{2\sigma^2(X_u)}dBu>y\right)\\
&\leq 1-e^{-\overline{p}t}+2\P_y\left(W_1>\frac{y-\overline{g}t}{\sqrt{2t\overline{\sigma}^2}}\right):=1-e^{-\overline{p}t} + 2A(t,y),
\end{align*}
and for $y>x_0$,
\begin{align*}
A(t,y) = \P_0\left(W_1 +y>\frac{y-\overline{g}t}{\sqrt{2t\overline{\sigma}^2}}\right) = \P_0\left(W_1>\frac{(1-\sqrt{2t\overline{\sigma}^2})y-\overline{g}t}{\sqrt{2t\overline{\sigma}^2}}\right)\leq A(t,x_0),
\end{align*}
where the final bound holds for $t\leq (2\overline{\sigma}^2)^{-1}$.
Finally, we obtain that there exists $t_{x_0}>0$
such that for all $y>x_0$
$$\P_y(\tau^+(2y)\leq t_{x_0})<1/2.$$
Moreover, the probability that during the time $t_{x_0}$, conditionally on $\lbrace \tau^+(2y)> t_{x_0}\rbrace$, the process makes at least $N_\kappa$ negative jumps, with a jump size in $(0,\nu]$ is larger than:
$$ p_{t_{x_0}}(\underline{r},N_\kappa,\nu_\kappa):=e^{- \underline{r}\nu_\kappa t_{x_0}}\sum_{i=N_\kappa}^\infty \frac{(\underline{r}\nu_\kappa t_{x_0})^i}{i!}. $$
This entails, using \eqref{descente},
\begin{equation} \label{temps_descente} \P_y(\tau^-(x_0)\leq t_{x_0})>p_{t_{x_0}}(\underline{r},N_\kappa,\nu_\kappa)/2>0, \end{equation}
which ends the proof of {\it iii)}.
\end{proof}

\subsection{Proofs of Section \ref{section_expl_lineage}}

We now focus on the explosion behaviour of the process.

\begin{proof}[Proof of Theorem \ref{thm:explosion}]
 As we gave all the details of the proof of Theorem \ref{thm:absorption}, we will here only provide the elements of the proof which differ from the proof of \cite[Theorem 2.8]{li2017general}.
 
We take a small enough $b^{-1}$ and
$\eps$ satisfying $0 < b < \eps^{-1}$.
 We begin the proof of point $i)$ similarly as in \cite[Theorem 2.8]{li2017general}, except that we take 
 $$ d_n:= \left|\frac{\ln\left(\eps^{(1-a)n/2}\right)}{\inf_{x\in [b,\varepsilon^{-n}]}G_a(x)}\right|$$
 instead of
 $$ d_n:= \frac{\ln (\eps^{-n(1-a)/2})}{\ln^r (\eps^{-n})}, $$
 and obtain in the same way using the Borel Cantelli lemma that
\begin{equation} \label{BorelCant2}
\P_{\eps^{-1}}(\tau^+(\infty)=\infty \text{ or } \tau^-(b) < \tau^+(\infty)<\infty )=1. \end{equation}
The authors of \cite{li2017general} then claim that they can conclude the proof as the proof of point $i)$ of their Theorem 2.3. However, in the latter case, they only need the strong Markov property to obtain that for any $\lambda >0$, $\E_\eps[e^{-\lambda \tau^-(0)} ;\tau^-(0)< \infty] = 0$ and
consequently, $\P_\eps (\tau^-(0)<\infty) = 0$, as their process does not have negative jumps.
In the current case, to obtain that $\E_{\eps^{-1}} [ e^{-\lambda \tau^+(\infty)}; \tau^+(\infty)<\infty ]=0$, we (and they) have to take into account the fact that there are positive jumps and that $X_{\tau^+(\eps^{-1})}$ may be strictly bigger than $\eps^{-1}$.

Let us first notice that for any $\eps^{-1} \leq y \leq 2 \eps^{-1}$, the same reasoning
as the one to obtain \eqref{BorelCant2}
leads to
\begin{equation}
 \label{two_cases}  
\P_{y}(\tau^+(\infty)=\infty \text{ or } \tau^-(b) < \tau^+(\infty)<\infty )=1. \end{equation}

Let us thus fix $\lambda>0$ and introduce the following real number:
$$ \mathcal{A}(\eps) := \sup_{\eps^{-1} \leq y \leq 2 \eps^{-1}}
 \E_y \left[ e^{-\lambda \tau^+(\infty)}; \tau^+(\infty)<\infty \right] . $$
For any $\eps < 1$, $y \leq \eps^{-1}$, we have by the Markov inequality
\begin{align*} \P_y(X_{\tau^+(\eps^{-1})}>2\eps^{-1}) & \leq 
\P_y(X_{\tau^+(\eps^{-1})}-X_{\tau^+(\eps^{-1})-}>\eps^{-1})\\
& = 
\P_y\left(\left(X_{\tau^+(\eps^{-1})}-X_{\tau^+(\eps^{-1})-}\right)^2\wedge \left(X_{\tau^+(\eps^{-1})}-X_{\tau^+(\eps^{-1})-}\right)>\eps^{-1} \right)\\& \leq 
\eps \int_0^\infty (z\wedge z^2)\pi(dz).
\end{align*}

Using Equation \eqref{two_cases} and the strong Markov property, we thus get, for any $\eps^{-1} \leq y \leq 2 \eps^{-1}$:
\begin{align*}
 \E_y \left[ e^{-\lambda \tau^+(\infty)}; \tau^+(\infty)<\infty \right]&=
 \E_y \left[ e^{-\lambda \tau^+(\infty)}; \tau^-(b) <\tau^+(\infty)<\infty \right]\\
 & \leq \E_y \left[ e^{-\lambda \tau^-(b)} \E_{X_{\tau^-(b)}}\left[e^{-\lambda \tau^+(\infty)}; \tau^+(\infty)<\infty\right]; \tau^-(b) <\infty\right].
\end{align*}
Using again the strong Markov property, we get for all 
$x\leq b\leq \eps^{-1}$
\begin{align*}
\E_{x}\left[e^{-\lambda \tau^+(\infty)}; \tau^+(\infty)<\infty\right] & = \E_y \left[ e^{-\lambda \tau^+(\infty)}; \tau^-(b) <\tau^+(\infty)<\infty \right]\\
&=\E_y \left[ e^{-\lambda \tau^+(\infty)}; \tau^-(b) <\tau^+(\eps^{-1})<\tau^+(\infty)<\infty \right]\\
&= \E_{x} \left[ e^{-\lambda \tau^+(\eps^{-1})} \E_{X_{\tau^+(\eps^{-1})}}\left[e^{-\lambda \tau^+(\infty)}; \tau^+(\infty)<\infty\right]; \tau^+(\eps^{-1}) <\infty\right]\\
& \leq \mathcal{A}(\eps)+\eps \int_0^\infty (z\wedge z^2)\pi(dz),
\end{align*}
where the last inequality is obtained by considering the event $\lbrace \eps^{-1}\leq X_{\tau^+(\eps^{-1})}\leq 2\eps^{-1}\rbrace$ and its complement.
Finally, combining the last two inequalities, we obtain
\begin{align*}
 \E_y \left[ e^{-\lambda \tau^+(\infty)}; \tau^+(\infty)<\infty \right] & \leq \E_y \left[ e^{-\lambda \tau^-(b)}; \tau^-(b) <\infty \right] \left(\mathcal{A}(\eps)+\eps \int_0^\infty (z\wedge z^2)\pi(dz)\right).
\end{align*}
But there exists $C(b)<1$ such that for $2b \leq y$, 
$$ \E_y \left[ e^{-\lambda \tau^-(b)}; \tau^-(b) <\infty \right]<C(b). $$
Otherwise we would have
$$ \lim_{y \to \infty} \E_y \left[ e^{-\lambda \tau^-(b)}; \tau^-(b) <\infty \right] =1, $$
and thus $\tau^-(b)$ would converge to $0$ when the initial condition of the process goes to $\infty$ which would contradict our assumptions on the regularity
of the negative jumps. Hence, as for $\eps$ small enough, $2b \leq \eps^{-1}$, we obtain for such an $\eps$
$$ \mathcal{A}(\eps) \leq \frac{C(b)\left( \int_{\R_+}(z \wedge z^2)\pi(dz)\right)\eps}{1-C(b)}. $$
We thus deduce that
$$ \lim_{y \to \infty}\E_y \left[ e^{-\lambda \tau^+(\infty)}; \tau^+(\infty)<\infty \right]=0. $$
Now, let us take $x,\mu>0$. Then, there exists $N_0$ such that for any $N \geq N_0$,
$$ \E_N \left[ e^{-\lambda \tau^+(\infty)}; \tau^+(\infty)<\infty \right]\leq \mu. $$
Hence,
$$ \E_x \left[ e^{-\lambda \tau^+(\infty)}; \tau^+(\infty)<\infty \right]
\leq \E_x \left[ \E_{X_{\tau^+(N_0)}} \left[ e^{-\lambda \tau^+(\infty)}; \tau^+(\infty)<\infty\right] \right]\leq \mu $$
and thus for all $x>0$
$$ \E_x \left[ e^{-\lambda \tau^+(\infty)}; \tau^+(\infty)<\infty \right]=0,$$
which completes the proof of point $i)$ (and of point $i)$ of \cite[Theorem 2.8]{li2017general}).\\

The proof of point $ii)$ is the same as the proof of point $ii)$ of \cite[Theorem 2.8]{li2017general}, except that we modify the function $t(\cdot)$ as we did for the proof of point $ii)$ of Theorem \ref{thm:absorption}.

We now prove point {\it iii).}
Assume 
 that for any positive $x$, $p(x)+ \sigma(x)>0$. Let $x_1> 0$ be such that 
$$ \P_{y}(\tau^+(\infty)<\infty)>0, \quad \forall y \geq x_1. $$
Let $y< x_1$. If $p(y)>0$, there exists $\eta_1>0$ such that $p$ stays positive on $[y-\eta_1,y+\eta_1]$ as it is a continuous 
function according to Assumption \ref{ass_PSS}. 
Hence, for $\eta_2>0$ small enough, starting from $y$, we can show as in the proof of Theorem \ref{thm:absorption}{\it iii)}, that the probability that the process is bigger than $x_1$ thanks to a positive jump is positive:
$$ \P_y(X_{\eta_2}\geq x_1)>0, $$
and using the Markov property, we obtain 
\begin{equation*} \P_y(\tau^+(\infty)<\infty)>0.
\end{equation*}
Now assume that $p(y)=0$ but $\sigma(y)>0$. As $\sigma$ is continuous, if $\sigma(z)>0$ for $z\in [y,x_1]$ then
$$\mathbb{P}_y(X_s\geq x_1)>0,\quad \forall s>0
$$
thanks to the diffusion and we end the proof by applying again the Markov property. Else, if $\sigma$ is only positive on an interval of the form 
$[y,x_2)$ with $y<x_2<x_1$, then by continuity of $p$ and $\sigma$ given by Assumption \ref{ass_PSS}, $p(x_2)>0$ and we are back to 
the first case.
We thus have proven that
\begin{equation*} \P_y(\tau^+(\infty)<\infty)>0,\quad \forall y\geq 0,
\end{equation*}
as soon as $p+\sigma>0$ on $\R_+^*$.
\end{proof}

\subsection{Proof of Section \ref{sec:condH}}
\begin{proof}[Proof of Proposition \ref{prop_ext_cell_line}]
Let $\eps>0$. We first focus on absorption. According to the assumptions of point \textit{i)}, there exists $x_0>0$ such that for all $x<x_0$,
$$ |H(x)|=\left|\frac{g(x)}{x}+r(x) \E[\ln\Theta]- \frac{\sigma^2(x)}{x^2}- p(x)I(x)\right| <\frac{\varepsilon}{2} |\ln x|. $$
Let us prove that there exists $a>1$ such that \ref{A1} is satisfied {\it i.e.} that there exists $a>1$ and a positive function $f$ such that
$$
H_a(x):=\frac{g(x)}{x} -a\frac{\sigma^2(x)}{x^2}- p(x)I_a(x)=f(x)+o(\ln x), \quad (x \to 0).
$$ 
We have
\begin{align}\label{eq:diffH_ext}
|H_a(x)-H(x)| \leq -r(x)\E[\ln\Theta]+(1-a)\frac{\sigma^2(x)}{x^2}+p(x)|I_a(x)-I(x)|.
\end{align}
To study the last term of \eqref{eq:diffH_ext}, let us define for all $x,z\geq 0$,
$$
f(a,x,z) =a z^2\int_0^1(1-v)(1+zx^{-1}v)^{-1-a}dv,\quad f(1,x,z) = x^2\left(zx^{-1}-\ln(1+zx^{-1})\right).
$$
From the definition of $I_a$ and $I$ in \eqref{def:Ia} and \eqref{def:I}, respectively, we have
\begin{align*}
I_a(x) = x^{-2}\int_{\R_+}f(a,x,z)\pi(dz),\quad I(x) = x^{-2}\int_{\R_+}f(1,x,z)\pi(dz).
\end{align*}
Moreover, for every $x,z>0$,
\begin{align*}
\partial_a f(a,x,z)& =  z^2 \left( \int_0^1 (1-a\ln(1+zx^{-1}v))(1+zx^{-1}v)^{-(1+a)}(1-v)dv\right)\\
& = z^2 \left( \int_0^1 \frac{1-v}{(1+zx^{-1}v)^{1+a}}dv -a\int_0^1 \frac{\ln(1+zx^{-1}v)(1-v)}{(1+zx^{-1}v)^{1+a}}dv \right)\\
& = zx \int_0^1 \frac{\ln(1+zx^{-1}v)}{(1+zx^{-1}v)^{a}}dv,\\
\end{align*}
where the last equality is obtained using integration by part in the second integral. Then, computing the integral, we get
\begin{align*}
\partial_a f(a,x,z)& = 
&= \frac{x^2}{(a-1)^2}\left[1-(a-1)\ln(1+zx^{-1})(1+zx^{-1})^{1-a}-(1+zx^{-1})^{1-a}\right]
\end{align*}

Then, according to Taylor-Lagrange's formula, there exists $ y\in (1-a,0)$ such that 
$$
1-(1+zx^{-1})^{1-a} = -\ln(1+zx^{-1})(1+zx^{-1})^{y}(1-a),
$$
and we obtain
\begin{align}\nonumber
\partial_a f(a,x,z)&=\frac{x^2}{(a-1)^2}\left[(a-1)\ln(1+zx^{-1})\left((1+zx^{-1})^{y}-(1+zx^{-1})^{1-a}\right)\right]\\\label{eq:partial_a}
&=\frac{x^2}{(a-1)}\left[(y+a-1)\ln(1+zx^{-1})^2(1+zx^{-1})^{\hat{y}}\right]
\end{align}
for some $ \hat{y}\in (1-a,y)$ according to Taylor-Lagrange's formula. Then, for $a>1$, 
$$
0\leq \partial_a f(a,x,z)\leq x^2\ln(1+zx^{-1})^2 \leq z^2
$$
and using that $\int_{\mathbb{R}_+} z^2\pi(dz)<\infty$, we obtain
\begin{align}\label{eq:majIa}
x^2\partial_a I_a(x) = \int_{\R_+}\partial_af(a,x,z)\pi(dz)\leq \int_{\mathbb{R}_+} z^2\pi(dz).
\end{align}
And, using again Taylor-Lagrange's formula, for any $x >0$ there exists $\tilde{a}(x)\in (1,a)$ such that
\begin{align*}
x^2|I_a(x)-I(x)| = x^2\left|\partial_a I_{\tilde{a}(x)}(x)\right|(a-1).
\end{align*}

Now using the previous computations, we obtain that
there exists also $x_1<x_0$ and $a_0>1$ such that for all $x<x_1$ and $1<a<a_0$,
\begin{align*}
\left|\frac{r(x)}{\ln x}\right|\left| \E[\ln\Theta]\right|<\eps/6,\quad (a-1)\frac{\sigma(x)^2}{x^2\ln x}<\eps/6,\quad \left|\frac{p(x)}{x^2\ln x}\right|\left|x^2I_{a}(x)-x^2I(x)\right|<\eps/6.
\end{align*}

Finally, combining the last inequalities with \eqref{eq:diffH_ext}, we obtain for all $x<x_1$ and $a\in (1,a_0)$,
\begin{align*}
|H_a(x)|\leq |H(x)|+ |H_a(x)-H(x)|\leq \eps |\ln x|,
\end{align*}
and thus  Condition \ref{A1} holds and we may apply Theorem \ref{thm:absorption}.\\

The proof of the second point is similar except that we have to adapt the bounds to the case $a<1$.
By assumption, there are $\eta>0$ and $x_0>0$ such that for all $x<x_0$,
$$
H(x)\leq - \ln (x^{-1})(\ln\ln (x^{-1}))^{1+\eta}.
$$
We prove that there exist $a<1$, $\eta'>0$ and $x_1>0$ such that for all $x<x_1$, 
$$
H_a(x)\leq -\ln(x^{-1})(\ln\ln(x^{-1}))^{1+\eta'}
$$
by bounding the difference between $H_a$ and $H$ on $[0,x_0]$. Note that for $a<1$, similarly to \eqref{eq:partial_a}, there exist $y\in (0, 1-a)$ and $\hat{y}\in (y, 1-a)$ such that
\begin{align}\nonumber\label{majz2a<1}
\partial_a f(a,x,z)&=\frac{x^2}{(1-a)}\left[(1-a-y)\ln(1+zx^{-1})^2(1+zx^{-1})^{\hat{y}}\right]\\
&\leq x^2 \ln(1+zx^{-1})^2(1+zx^{-1})\leq z^2,
\end{align}
so that we can conclude as before.\\

We now turn to the proof of results on the explosion of the process. 

 According to the assumptions of point \textit{i)}, there exists $x_0>0$ such that for all $x>x_0$,
$$ |H(x)|=\left|\frac{g(x)}{x}+r(x) \E[\ln\Theta]- \frac{\sigma^2(x)}{x^2}- p(x)I(x)\right| <\frac{\varepsilon}{2} \ln x. $$ 
We now prove that there exists $a<1$ such that \ref{SNinfty} is satisfied {\it i.e.} that there exist $a<1$ and a positive function $f$ such that
$$
\tilde{H}_a(x):=\frac{g(x)}{x}-r(x) \frac{1-\E[\Theta^{1-a}]}{1-a}- a\frac{\sigma^2(x)}{x^2}- p(x)I_a(x)=-f(x)+o(\ln x).
$$
We have
\begin{align}\label{eq:diffH}
|\tilde{H}_a(x)-H(x)| \leq  r(x)\left|\frac{1-\E[\Theta^{1-a}]}{1-a}+\E[\ln\Theta]\right|+(1-a)\frac{\sigma^2(x)}{x^2}+p(x)|I_a(x)-I(x)|.
\end{align}
For the first term of \eqref{eq:diffH}, let us consider for $\theta \in (0,1)$ the functions 
 $$f_\theta : y \in \R \mapsto \theta^y \quad \text{and} \quad g_\theta : y \in \R \mapsto \frac{1- \theta^y}{y} .$$
 Then
 $$ f'_\theta (y)=(\ln \theta) \theta^y , \quad f''_\theta (y)=(\ln \theta)^2 \theta^y \geq 0 \quad \text{and} \quad g'_\theta(y) = \frac{1}{y^2}(f_\theta(y)-f_\theta(0)-yf_\theta'(y)) . $$
Using Taylor's formula twice, we get the existence of $0 \leq \lambda(\theta,y),\mu(\theta,y)\leq 1$ such that
\begin{align*}
 g'_\theta(y) = \frac{1}{y}(f'_\theta(\lambda (\theta,y) y)-f_\theta'(y)) = (\lambda (\theta,y)-1)f''_\theta(\mu (\theta,y)y)\leq 0.
\end{align*}
We deduce that the function 
$$ h: y \in \R \mapsto \int_0^1 \frac{1- \theta^y}{y} \kappa(d\theta)=\int_0^1 \frac{f_\theta(0)- f_\theta(y)}{y} \kappa(d\theta)=-\int_0^1 f'_\theta(\lambda (\theta,y) y) \kappa(d\theta) $$
is non-increasing. Moreover, 
$$ - f'_\theta(\lambda (\theta,y) y) = - (\ln \theta)\theta^{\lambda (\theta,y) y} \xrightarrow[y \to 0]{}-\ln \theta .$$
As Condition \eqref{cond_lntheta} holds, we deduce by monotone convergence that 
$$ \lim_{y \to 0}h(y)= \lim_{a \to 1}\int_0^1 \frac{1- \theta^{1-a}}{1-a}\kappa (d\theta)= - \int_0^1 \ln \theta \kappa (d\theta)= -\E[\ln \Theta]. $$

For the last term of \eqref{eq:diffH}, we can use again \eqref{eq:majIa}, which is satisfied in this case according to \eqref{majz2a<1}. Now combining the previous computations, we obtain that
there exists $x_1>x_0$ and $a_0\in (0,1)$ such that for all $x>x_1$ and $a_0<a<1$,
$$
\left|\frac{r(x)}{\ln x}\right|\left|h(1-a_0)-h(0)\right|<\eps/6,\quad\left|\frac{p(x)}{x^2\ln x}\right|\left|x^2I_{a}(x)-x^2I(x)\right|<\eps/6,\quad (1-a)\frac{\sigma(x)^2}{x^2\ln x}<\eps/6. $$

Finally, combining the last inequalities with \eqref{eq:diffH}, we obtain for all $x>x_1$ and $a\in (a_0,1)$,
\begin{align*}
|\tilde{H}_a(x)|\leq |H(x)|+ |\tilde{H}_a(x)-H(x)|\leq \eps \ln(x)
\end{align*}
and thus  Condition \ref{SNinfty} holds and we may apply Theorem \ref{thm:explosion}.

The proof for the case $a>1$ is similar.
\end{proof}

\subsection{Proofs of Section \ref{section_class}}

We now turn to the proof of Theorem \ref{th:convXt}. Let $t_0>0$ be fixed.
First, we prove that if the division mechanism of the cells and the random fluctuations are stronger than the growth of the parasites in the sense of 
\ref{B0} for some $x_0> 0$, then the stopping times $T_i(x_0)$, are finite a.s. for all $i\geq 0$ where $T_0 = 0$ and for all $i\geq 1$,
\begin{equation}\label{eq:Ti}
T_i(x_0) = \inf\lbrace t\geq T_{i-1}(x_0)+t_0,\ X_t\leq x_0\rbrace.
\end{equation}

\begin{lemma}\label{lem:Tifini}
Under Assumptions \ref{ass_PSS} and Condition \ref{SNinfty}, if \ref{B0} holds  for some $\eta>0$ and $x_0>0$, we have
$\E[T_i(x_0)]<\infty$ for all $i\geq 0$. 
\end{lemma}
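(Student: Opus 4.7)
The plan is to use $x \mapsto \ln x$ as a Lyapunov function. A direct computation based on \eqref{inf_gene} and the definition of $I$ yields
\[
\mathcal{G}(\ln)(x) = \frac{g(x)}{x} - \frac{\sigma^2(x)}{x^2} + r(x)\E[\ln\Theta] - p(x)I(x) = H(x),
\]
so that $\ln X$ is a semimartingale whose drift at $X_s$ is $H(X_s)$. By Condition \ref{B0}, this drift is bounded above by $-\eta$ whenever $X > x_0$. Moreover, Condition \ref{SNinfty} combined with Theorem \ref{thm:explosion}(i) ensures that $X$ does not explode in finite time, so $\tau_n^+ := \inf\{t \geq 0 : X_t \geq n\}$ tends to $+\infty$ almost surely as $n \to \infty$.

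The core intermediate estimate is: for every $y>0$, $\E_y[\tau^-(x_0)] \leq C_1 + C_2 (\ln y)_+$ for some constants $C_1, C_2>0$ depending only on $\eta$, $x_0$ and the jump laws. Starting from $y > x_0$, I would apply It\^o's formula to $\ln X$ and localize with $S_n := \tau^-(x_0) \wedge \tau_n^+ \wedge t$. On $[0, S_n)$ the process $X$ stays in $(x_0, n)$, so the local martingale associated with $\ln X$ is a true martingale on this interval, while the only overshoot at $S_n$ is produced by a single jump whose contribution has finite expectation thanks to $\E[|\ln\Theta|]<\infty$ and $\int\ln(1+z)\pi(dz)<\infty$ from \eqref{cond_lntheta}. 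Taking expectations,
\[
\E_y[\ln X_{S_n}] = \ln y + \E_y\!\left[\int_0^{S_n} H(X_s)\,ds\right] \leq \ln y - \eta\, \E_y[S_n],
\]
and a lower bound $\E_y[\ln X_{S_n}] \geq C_0$ uniform in $n$ and $t$ follows by splitting along $\{S_n = t\}$, $\{S_n = \tau_n^+\}$ (where $\ln X_{S_n} \geq 0$ for $n\geq 1$), and $\{S_n = \tau^-(x_0)\}$ (where either $\ln X_{S_n} = \ln x_0$ or $\ln X_{S_n} \geq \ln x_0 + \ln\Theta$ with $\E[|\ln\Theta|]<\infty$). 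Sending $n \to \infty$ (by non-explosion) and then $t \to \infty$ (monotone convergence) yields the estimate.

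With this bound in hand, $\E[T_i(x_0)] < \infty$ follows by induction on $i$. The base case $i = 0$ is trivial. For the inductive step, the strong Markov property at the stopping time $T_{i-1}(x_0) + t_0$ gives
\[
\E[T_i(x_0)] - \E[T_{i-1}(x_0)] - t_0 = \E\!\left[\E_{X_{T_{i-1}(x_0)+t_0}}[\tau^-(x_0)]\right] \leq C_1 + C_2\, \E\!\left[(\ln X_{T_{i-1}(x_0) + t_0})_+\right].
\]
Since $X_{T_{i-1}(x_0)} \leq x_0$, a further application of the Markov property reduces the remaining moment control to $\sup_{y \leq x_0} \E_y[(\ln X_{t_0})_+] < \infty$. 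This is obtained by a Gronwall-type argument applied to $V(x) = \ln(1+x)$: using the non-positivity of the diffusive and negative-jump contributions and the bound $\int(z\wedge z^2)\pi(dz)<\infty$ on the positive-jump integrand, $\mathcal{G}V(x)$ can be bounded in terms of $V(x)$ plus the locally bounded quantity $g(x)/(1+x)$, giving a finite upper bound on $\E_y[V(X_{t_0})]$ uniformly in $y \in [0, x_0]$.

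The main obstacle is the Dynkin step in the second paragraph: justifying that the stopped local martingale is a true martingale and controlling the single-jump overshoot at $S_n$ is precisely what requires the full strength of the moment assumptions \eqref{cond_lntheta}, and is also where the use of $\ln$ rather than $\ln(1+\cdot)$ matters, since the exact identity $\mathcal{G}(\ln) = H$ is critical to invoking Condition \ref{B0} directly.
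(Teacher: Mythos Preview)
Your approach is essentially the same as the paper's: apply It\^o's formula to $\ln X$, localize at $\tau^-(x_0)\wedge\tau^+(x_1)$, use Condition \ref{B0} to bound the drift by $-\eta$, bound $\ln X$ at the stopping time from below via $\ln(\Theta x_0)$, and then pass to the limit (via non-explosion from \ref{SNinfty} and Fatou) to obtain $\E_x[\tau^-(x_0)]\leq \eta^{-1}(\ln x-\ln x_0-\E[\ln\Theta])$, after which the strong Markov property handles the induction on $i$. The paper compresses the induction step into a single sentence, whereas you spell out the need to control $\E[(\ln X_{T_{i-1}(x_0)+t_0})_+]$; note however that your Gronwall argument for this relies on a global bound of the form $g(x)/(1+x)\leq C+C'\ln(1+x)$, which does not follow from local boundedness alone and would need a bit more justification.
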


\begin{proof}[Proof of Lemma \ref{lem:Tifini}]
Let us consider $\tau = \tau^-(x_0)\wedge \tau^+(x_1)$ where $x_1\geq x_0$ and the $\tau^{\pm}$'s have been defined in \eqref{tau1}.
According to the strong Markov property, we only have to prove that $\E_x(\tau^-(x_0))<\infty$ for all $x\geq 0$.
By It\^o's formula, we have for all $t\geq 0$
\begin{align*}
\ln(X_{t\wedge \tau}) = &  \ln(X_{0})+\int_{0}^{t\wedge \tau}\frac{g(X_s)}{X_s}ds-\int_{0}^{t\wedge \tau}\frac{\sigma^2(X_s)}{X_s^2}ds 
+\E\left[\ln \Theta \right]\int_{0}^{t\wedge \tau} r(X_s)ds \nonumber \\
&+\int_{0}^{t\wedge \tau} p(X_s)\int_0^\infty \left[\ln(X_{s}+z)-\ln(X_{s})-z/X_{s}\right]\pi(dz)ds +M_{t\wedge \tau},
\end{align*}
where $(M_{s\wedge \tau}, s \geq 0)$ is a martingale with null expectation.
Then, using Condition \ref{B0}, we obtain
\begin{align}\label{eq:itoln}
\ln(X_{t\wedge \tau}) - \ln(X_{0})
 \leq - \eta  (t \wedge \tau) + M_{t\wedge \tau}.
\end{align}
Notice that $X_{\tau^-(x_0)}$ may be equal to $x_0$ if there is no jump at time $\tau^-(x_0)$, or equal to $X_{\tau^-(x_0)^-} \Theta$ where 
$X_{\tau^-(x_0)^-}\geq x_0$, $\Theta$ is independent of $X_{\tau^-(x_0)^-}$ and distributed according to $\kappa$. As a consequence, for all $t\geq 0$
$$ \ln\left(X_{t\wedge \tau}\right) \geq 
 \ln\left(\Theta x_0\right),$$
almost surely. Then, taking the expectation in \eqref{eq:itoln}, using the last inequality and letting $t$ tend to infinity yield for all $x>0$
$$
\E_x(\tau)\leq \frac{1}{\eta}\ln\left(\frac{x}{\Theta x_0}\right).
$$
According to Theorem \ref{thm:explosion}, Condition \ref{SNinfty} yields that for all $x>0$, $\P_x(\tau^+(\infty)<\infty)=0$, so that $\liminf_{x_1\rightarrow +\infty}\tau^-(x_0)\wedge \tau^+(x_1)=\tau^-(x_0).$ Next, by Fatou's Lemma, 
\begin{align*}
\E_x[\tau^-(x_0)] = \E_x\left[\liminf_{x_1\rightarrow +\infty}\tau^-(x_0)\wedge \tau^+(x_1)\right]&\leq \liminf_{x_1\rightarrow +\infty}\E_x\left[\tau^-(x_0)\wedge \tau^+(x_1)\right]\\
& \leq \frac{1}{\eta}\ln\left(\frac{x}{\Theta x_0}\right)<\infty,
\end{align*}
which ends the proof.
\end{proof}

Let $t_1>0$.
Similarly, if the growth of the parasites is stronger than the division mechanism of the cells and the random fluctuations in the sense of 
\ref{LFG} for some $x_1> 0$, then the stopping times $\tT_i(x_1)$, are finite a.s. for all $i\geq 0$ where $\tT_0 = 0$ and for all $i\geq 1$,
\begin{equation*}\label{eq:tTi}
\tT_i(x_1) = \inf\lbrace t\geq \tT_{i-1}(x_1)+t_1,\ X_t\geq x_1\rbrace.
\end{equation*}

\begin{lemma}\label{lem:Tifini2}
Under Assumptions \ref{ass_PSS} and Condition \ref{A1}, if $x_1> 0$ is such that \ref{LFG} is satisfied for some $\eta>0$, and if $\int_{\mathbb{R}_+}\ln(1+z)\pi(dz)<\infty$, we have
$\E[\tT_i(x_1)]<\infty$ for all $i\geq 0$. 
\end{lemma}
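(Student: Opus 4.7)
My approach is to mirror the proof of Lemma \ref{lem:Tifini}, interchanging the roles of the opposing conditions (\ref{LFG} in place of \ref{B0}, and \ref{A1} in place of \ref{SNinfty}). By the strong Markov property applied at each $\tT_{i-1}(x_1)+t_1$, it suffices to prove $\E_x[\tau^+(x_1)]<\infty$ for every initial value $x\geq 0$; the only non-trivial case is $x\in(0,x_1)$, for which I fix $x_0\in(0,x)$ and set $\tau := \tau^+(x_1)\wedge \tau^-(x_0)$.

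Applying Itô's formula to $\ln X_{\cdot\wedge\tau}$, after a standard localization argument legitimized by Assumption \ref{ass_PSS} and \eqref{cond_lntheta}, and taking expectation yields
\begin{align*}
\E[\ln X_{t\wedge \tau}] \;=\; \ln x + \E\!\left[\int_0^{t\wedge \tau} H(X_s)\,ds\right] \;\geq\; \ln x + \eta\, \E[t\wedge \tau],
\end{align*}
since $X_s<x_1$ for $s<\tau$ and \ref{LFG} gives $H(X_s)\geq \eta$ there. For the complementary upper bound, on $\{t<\tau\}\cup\{\tau=\tau^-(x_0)\leq t\}$ one has $X_{t\wedge\tau}\leq x_1$ so $\ln X_{t\wedge\tau}\leq \ln x_1$; on $\{\tau=\tau^+(x_1)\leq t\}$, writing $X_\tau \leq x_1+Z_\tau$ where $Z_\tau\geq 0$ is the positive-jump size at $\tau$ (zero for a continuous crossing) gives $\ln X_\tau \leq \ln x_1 + \ln(1+Z_\tau/x_1)$. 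The key step is to show that the expected overshoot $\E[\ln(1+Z_\tau/x_1)\mathbf{1}_{\tau=\tau^+(x_1)\leq t}]$ is bounded by a finite constant $C$ independent of $t$: this uses the Poisson structure at the crossing jump together with $\int \ln(1+z)\pi(dz)<\infty$. Only the single crossing jump contributes, its conditional distribution is governed by $\pi$ restricted to sizes exceeding $x_1-X_{\tau-}>0$—a set of finite $\pi$-measure by $\int(z\wedge z^2)\pi(dz)<\infty$—and $\int \ln(1+z/x_1)\pi(dz)$ is finite, being controlled for small $z$ by $\int_0^1 z\pi(dz)<\infty$ (itself a consequence of $\ln(1+z)\geq z/2$ on $[0,1]$ and \eqref{cond_lntheta}) and for large $z$ by $\int \ln(1+z)\pi(dz)<\infty$ plus the finiteness of $\pi([1,\infty))$. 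Combining the two inequalities produces $\eta\,\E[t\wedge\tau]\leq \ln(x_1/x)+C$, and monotone convergence in $t$ yields $\E[\tau]<\infty$.

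Sending $x_0\to 0$, Condition \ref{A1} together with Theorem \ref{thm:absorption}(i) guarantees $\tau^-(0)=\infty$ almost surely, so $\tau^+(x_1)\wedge\tau^-(x_0)\nearrow \tau^+(x_1)$, and Fatou's lemma transfers the bound to $\E_x[\tau^+(x_1)]<\infty$ for every $x\geq 0$. The estimate $\E[\tT_i(x_1)]<\infty$ for every $i\geq 0$ then follows by induction via the strong Markov property applied at $\tT_{i-1}(x_1)+t_1$.

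The main obstacle is the $t$-uniform control of the expected overshoot: a naive compensator bound for $\widetilde Q$ counts all positive jumps in $[0,t\wedge\tau]$ and produces only an $\mathcal{O}(\E[t\wedge\tau])$ estimate, which need not dominate the drift term $\eta\,\E[t\wedge\tau]$. Obtaining the required constant bound requires isolating the contribution of the single crossing jump and exploiting the fact that, on $\{\tau=\tau^+(x_1)\}$, the jump size is conditioned to exceed a positive threshold, so that its expected log-size remains finite. This is the role played by $\int \ln(1+z)\pi(dz)<\infty$ in the hypotheses.
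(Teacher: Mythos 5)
Your proof follows essentially the same route as the paper's: It\^o's formula applied to $\ln X_{\cdot\wedge\tau}$ with $\tau=\tau^-(x_0)\wedge\tau^+(x_1)$, the drift lower bound $\eta$ from \ref{LFG}, an upper bound on $\E_x[\ln X_{t\wedge\tau}]$ obtained by controlling the overshoot at $\tau^+(x_1)$ via $\int_{\R_+}\ln(1+z)\pi(dz)<\infty$, then letting $t\to\infty$ and $x_0\to 0$ using Theorem \ref{thm:absorption}(i) and Fatou's lemma, with the strong Markov property handling the induction over $i$. If anything, your treatment of the overshoot (isolating the single crossing jump, whose size is conditioned to exceed $x_1-X_{\tau-}$) is more careful than the paper's one-line bound $\E_x[\ln X_{t\wedge\tau}]\leq\int_{\R_+}\ln(x_1+z)\pi(dz)$, which is stated without justification and is delicate when $\pi$ has infinite total mass.
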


\begin{proof}[Proof of Lemma \ref{lem:Tifini2}]
Without loss of generality, we assume that $x_1>1$. Following the same lines as in the proof of Lemma \ref{lem:Tifini}, we obtain
\begin{align}\label{eq:itoln2}
\ln(X_{t\wedge \tau}) - \ln(X_{0})
 \geq \eta  (t \wedge \tau) + M_{t\wedge \tau},
\end{align}
where $\tau = \tau^-(x_0)\wedge \tau^+(x_1)$ where $x_1\geq x_0$.
As in the proof of Lemma \ref{lem:Tifini}, considering both cases of $X$ exceeding $x_1$ thanks to a jump or not, we obtain  for all $t\geq 0$
$$ \E_x\left[\ln\left(X_{t\wedge \tau}\right)\right] \leq 
 \int_{\mathbb{R}_+}\ln\left(x_1+z\right)\pi(dz).$$
Then, taking the expectation in \eqref{eq:itoln2}, using the last inequality and letting $t$ tend to infinity yield for all $x>0$
$$
\E_x(\tau)\leq \frac{1}{\eta}\int_{\mathbb{R}_+}\ln\left(x_1+z\right)\pi(dz)<\infty.
$$
According to Theorem \ref{thm:absorption}, Condition \ref{A1} yields that for all $x>0$, $\P_x(\tau^-(0)<\infty)=0$, so that $\liminf_{x_0\rightarrow 0}\tau^-(x_0)\wedge \tau^+(x_1)=\tau^+(x_1),$ and we conclude by Fatou's Lemma as before. 

\end{proof}

\begin{proof}[Proof of Theorem \ref{th:convXt}]

Apart from the proof of Equation \eqref{extin_B0A2}, the proof of Theorem \ref{th:convXt}
follows directly from Lemmas \ref{lem:Tifini} and \ref{lem:Tifini2}, and \cite[Theorem 7.1.4]{BN}.
It is very similar to the proof of the second point of \cite[Theorem 1]{hermann2018markov} for instance and we refer the 
reader to this paper for details. 
Equality \eqref{esp_Xinfty} is obtained by taking expectation in \eqref{eq:EDS}.

Let us now prove \eqref{extin_B0A2}. 
To do this, we first show that there exist $y_0,t_0$ and $\alpha>0$ such that,
\begin{equation} \label{probaextunif} \inf_{0\leq x \leq y_0}\P_x(X_{t_0}=0)\geq  \alpha. \end{equation}
Let us fix $a<1$ such that \ref{A2} is satisfied and $\delta<(3-2a)^{-1}$. On page \pageref{pageineq}, we have proved that there exist two non-negative functions on $\R_+$, $\mathfrak{t}$ and $\mathfrak{p}$ such that for all $\eps<e^{-1/(1-\delta)}$ such that \ref{A2} is satisfied for $x\leq \eps$, and $z \in (\eps^{1+\delta},\eps^{1-\delta})$, 
\begin{equation} \P_z\left(\tau^-(0)\leq \mathfrak{t}(\eps)\right)\geq \mathfrak{p}(\eps). \label{ineq_magique}
\end{equation}
where 
$$ \mathfrak{t}(\eps)= \sum_{n=0}^\infty t(\varepsilon^{(1+\delta)^n}),\quad \mathfrak{p}(\eps)= e^{-2\left( \E[\Theta^{1-a}]^{-1} +1\right)\varepsilon^{(1-a)\delta}(1-\varepsilon^{(1-a)\delta^2})^{-1}},  $$
$t(\cdot)$ has been defined in \eqref{teps}. By a classical functional study, we can check that the function $\mathfrak{t}$ is non-decreasing and the function $\mathfrak{p}$ is non-increasing.

Let us take $\eps>0$ such that \eqref{ineq_magique} is satisfied and $z\leq\eps^{1-\delta}$. Then, there exists $\eps_1 \leq \eps$ such that $\eps_1^{1+\delta}<z<\eps_1^{1-\delta}$. Now, by monotonicity, we get:
\begin{align*}
\P_{z}\left(\tau^-(0)\leq \mathfrak{t}(\eps)\right) \geq \P_{z}\left(\tau^-(0)\leq \mathfrak{t}(\eps_1)\right)\geq \mathfrak{p}(\eps_1) \geq \mathfrak{p}(\eps).
\end{align*}
Equation \eqref{probaextunif} is thus proven, if we take $y_0=\eps^{1-\delta}$, $t_0=\mathfrak{t}(\eps)$ and $\alpha=\mathfrak{p}(\eps)$.
Next, we need to show that there exist $t_1,\alpha>0$ such that
$$
\inf_{\eps^{1-\delta}\leq x\leq x_0}\P_x(X_{t_1}\leq \eps^{1-\delta})\geq \alpha>0,
$$
where $x_0>0$ is such that \ref{B0} is satisfied. We obtain this property by following the proof of \eqref{temps_descente}.

Recall the definition of $T_i(x_0)$ in \eqref{eq:Ti}. By the strong Markov property and \eqref{probaextunif}, we get for all $x\geq 0$ and all $i\geq 0$,
\begin{align*}
\mathbb{P}_{x}\left(X_{T_i(x_0)+t_0} =0 \big| (X_t,\ t\leq T_i(x_0)),\ T_i(x_0)<\infty\right)\geq \alpha.
\end{align*}
Applying Lemma \ref{lem:Tifini} and the strong Markov property,
we deduce that for any $x\geq 0$,
\begin{align*}
\P_x \left( X_t >0, \forall \ t\geq 0 \right)& \leq
\P_x\left(\forall i\geq 0,\ X_{T_i(x_0)+t_0} >0\right) 
= \P_x\left(\forall i\geq 0,\ X_{T_i(x_0)+t_0} >0,\ T_i(x_0)<\infty\right) = 0.
\end{align*}
This concludes the proof of the second point.\\

Finally, we prove \eqref{expl_A1LFG}. The proof is very similar to the one of \eqref{extin_B0A2} and we will not give all the details. Following the proof of \cite[Theorem 2.8]{li2017general} with the only difference that we choose the function $t$ as defined in \eqref{teps}, we obtain the existence of $a>1$ and of a small positive $\delta$ such that for every small enough $\eps$, 
$$ \P_{1/\eps} \left( \tau^+(\infty)\leq \mathfrak{t}(\eps) \right)\geq \prod_{k=1}^\infty \left( 1- 2 \eps^{(1-a)\delta (1+\delta)^n} \right) .$$
We end the proof as in the case of absorption.
\end{proof}

We now prove Proposition \ref{3_behaviours} and Corollary \ref{iiimoreprecise} which concern the absorption of the process.

\begin{proof}[Proof of Proposition \ref{3_behaviours}]
Let us introduce the following time change:
\begin{align} \label{time_change}
X_t = Y_{\int_0^t r(X_s)ds}.
\end{align}
According to Theorem 1.4 in Section 6 in \cite{EK}, there is a version of $X$ satisfying \eqref{time_change} for a process $Y$ that is a solution of the 
martingale problem with associated generator 

\begin{align*}
\mathcal{G}_Yf(x) = & \frac{g(x)}{r(x)}f'(x)+\frac{\sigma^2(x)}{r(x)}f''(x)+\int_0^1 (f(\theta x)-f(x))\kappa(d\theta)\\
& +\frac{p(x)}{r(x)}\int (f(x+z)-f(x)-zf'(x))\pi(dz),
\end{align*}
and is a weak solution to
\begin{multline}
\label{EDS_Y}
Y_t = Y_0+\int_0^t \frac{g(Y_s)}{r(Y_s)}ds+\int_0^t\sqrt{\frac{2\sigma^2(Y_s)}{r(Y_s)}}dB_s+\int_0^t\int_0^1\int_0^1 
(\theta - 1)Y_{s^-} N(ds,dx,d\theta)\\
 +\int_0^t\int_0^{p(Y_{s^-})/r(Y_{s^-})}\int_{\mathbb{R}_+}z\widetilde{Q}(ds,dx,dz),
\end{multline}
where we chose on purpose the same Poisson Point measures as in the definition of $X$ in \eqref{eq:EDS}.
In fact, as \eqref{EDS_Y} admits a unique strong solution
(see the proof of Proposition \ref{prop_sol_SDE}),
$Y$ is even pathwise unique.
Now let us introduce the processes $(K_t, t \geq 0)$ and $(Z_t, t \geq 0)$ via
$$ K_t:= \int_0^t \frac{g(Y_s)}{Y_sr(Y_s)} ds +  \int_0^t \int_0^{1}\int_0^1 \ln \theta N(ds,dx,d\theta) $$
and
$$ Z_t := Y_t e^{-K_t}. $$
Then an application of It\^o's formula with jumps gives
$$ Z_t = Y_0 + \int_0^t e^{-K_s} \sqrt{\frac{2\sigma^2(Y_s)}{r(Y_s)}}dB_s+ 
\int_0^t \int_0^{p(Y_{s^-})/r(Y_{s^-})}\int_{\R_+}e^{-K_{s^-}}z \widetilde{Q}(ds,dx,dz). $$
Hence $(Z_t, t \geq 0)$ is a non-negative local martingale. 
In particular it is a non-negative supermartingale and there exists a finite random variable $W$ such that 
\begin{equation} \label{eq:conv_W}
  \lim_{t\rightarrow +\infty}Y_t e^{-K_t}=W, \quad \text{a.s.}
\end{equation}
Under the assumptions of point {\it i)}, $K$ is smaller than a L\'evy process with drift $-\eta$. 
As a consequence, $e^{-K_t}$ goes to $+\infty$,
and we deduce from \eqref{eq:conv_W} that $Y$ goes to $0$.
As by assumption $\int_0^t r(X_s)ds\geq \underline{r}t$, we deduce from the time change \eqref{time_change} that $X$ goes to $0$.\\

We turn to the proof of {\it ii)} and consider the associated assumptions. In this case, $K$ is smaller than an oscillating 
L\'evy process, and we have $\liminf_{t \to \infty} K_t=-\infty$. This implies 
$\liminf_{t \to \infty} Y_t=0$.
Again, 
we deduce from the time change \eqref{time_change} that 
$\liminf_{t \to \infty} X_t=0$.\\

Let us now prove {\it iii)} as well as point {\it iii)} of Corollary \ref{iiimoreprecise}.
We use arguments similar to the ones needed to prove \cite[Corollary 2]{BPS}.
As we are in a more general setting, we need to adapt several of these arguments.
Most adaptations are obtained by couplings with well-chosen processes.

We denote by $M$ a finite bound of the function $x \mapsto (\sigma^2(x)+p(x))/(xr(x))$.
The first step consists in showing that $\P(W>0|K)>0$. To this aim, we look for a function 
$\tilde{v}_t(s,\lambda,K,Y)$, differentiable with respect
to the variable $s$, such that $F(s,Z_s)$ is a martingale conditional on $K=(K_s, s \geq 0)$, where
$$F(s,x):= \exp \{-x\tilde{v}_t(s,\lambda,K,Y)\}. $$
By an application of It\^o's formula with jumps, we obtain that 
$\tilde{v}_t$ has to satisfy for every $s \leq t$,
\begin{equation} \label{def_v3} \frac{\partial}{\partial s}\tilde{v}_t(s,\lambda,K,Y)
=e^{K_s}\tilde{\psi}_0\left( \tilde{v}_t(s,\lambda,K,Y)e^{-K_s},Y_s \right), \quad 
\tilde{v}_t(t,\lambda,K,Y)=\lambda, \end{equation}
where 
\begin{equation}\label{defpsitilde0} 
\tilde{\psi}_0(\phi,x)= \frac{\sigma^2(x)}{xr(x)}\phi^2 + \frac{p(x)}{xr(x)} \int_0^\infty \left( e^{-\phi z}-1 +\phi z \right)\pi(dz). 
\end{equation}
In particular
\begin{equation} \label{laplace} \E_y \left[ e^{-\lambda Z_t}\Big| K \right] = e^{-y \tilde{v}_t(0,\lambda,K,Y)}.
\end{equation}

Let us now introduce a function $v_t(s,\lambda,K)$, differentiable with respect
to the variable $s$, and satisfying.
$$ \frac{\partial}{\partial s}v_t(s,\lambda,K)
=e^{K_s}\psi_0\left( v_t(s,\lambda,K)e^{-K_s}\right), \quad v_t(t,\lambda,K)=\lambda, $$
where 
$$ \psi_0(\phi)= M \left( \phi^2 + \int_0^\infty \left( e^{-\phi z}-1 +\phi z \right)\pi(dz)\right). $$
Then for every $\lambda,x\geq 0$, $$\tilde{\psi}_0(\lambda,x)\leq \psi_0(\lambda)$$
and as a consequence, for all $s \leq t$, $\lambda > 0$
$$ v_t(s,\lambda,K)\leq \tilde{v}_t(s,\lambda,K,Y). $$
Combining this last inequality with \eqref{laplace}, we obtain that
\begin{equation*} 
\E_y \left[ e^{-\lambda Z_t}\Big| K \right] \leq e^{-y v_t(0,\lambda,K)}.
\end{equation*}
Taking $\lambda=1$ and letting $t$ go to infinity we get
\begin{equation*} \E_y \left[ e^{-W}\Big| K \right] \leq e^{-y v_\infty(0,1,K)}<1, \end{equation*}
where the last inequality comes from \cite{BPS} (see the proof of Corollary 2 on page 7). This allows us to conclude that
\begin{equation} \label{W_pos} \P(W>0|K)>0. \end{equation}
Under the assumptions of point {\it iii)} $K$ is larger than a L\'evy process with drift $\eta$ and as a consequence, $e^{-K_t}$ goes to $0$. From \eqref{eq:conv_W} and the previous inequality, we deduce that 
$$ \liminf_{t \to \infty} Y_t = \infty $$
with positive probability. In particular, this implies that 
$$ \liminf_{t \to \infty}X_t = \liminf_{t \to \infty} Y_{\int_0^t r(X_s)ds} \geq \liminf_{t \to \infty} Y_t >0 $$
with positive probability.\end{proof}

\begin{proof}[Proof of Corollary \ref{iiimoreprecise}]
Let us begin with point {\it iii)}. We showed in the proof of Proposition \ref{3_behaviours} (see \eqref{eq:conv_W}) that under the assumptions of point {\it iii)},
\begin{equation} \label{eq:conv_W2}
  \lim_{t\rightarrow +\infty}Y_t e^{-K_t}=W, \quad \text{a.s.},
\end{equation}
where $K$ is larger than a L\'evy process with drift $\eta$. Moreover, as $r(X_s)\geq \uline{r}$ for any $s\geq 0$, we have for any $t\geq 0$,
\begin{equation} \label{def_rhot}
\rho(t):= \int_0^t r(X_s)ds \geq \uline{r}t.
\end{equation}
Combining \eqref{time_change}, \eqref{eq:conv_W2} and \eqref{def_rhot}, we obtain
$$ \lim_{t \to \infty} Y_{\int_0^t r(X_s)ds}e^{-K_{\int_0^t r(X_s)ds}}
= \lim_{t \to \infty} X_t e^{-K_{\rho(t)}}=W \quad \text{a.s.} $$
Finally, \eqref{W_pos} allows to conclude the proof of \eqref{ineqW}.\\

Let us now focus on points {\it i)} and {\it ii)}.
The idea of the proof is to compare the survival probability of $X$ with the survival probability of 
a Feller diffusion with jumps, whose asymptotic behaviour has been studied in \cite{BPS}.

Let us recall the definitions of $\tilde{v}$ and $\tilde{\psi}_0$ in \eqref{def_v3} and \eqref{defpsitilde0}, respectively.
Now, according to \ref{B1}, $\inf_{x\geq 0}r(x)>0$ so that by assumption, $\inf_{x \geq 0}\sigma^2(x)/(xr(x))>0$. Therefore, there exists $\mathfrak{a}>0$ 
such that for every $\phi,x\geq 0$, $$\tilde{\psi}_0(\phi,x)\geq \mathfrak{a} \phi^2.$$
Hence, if we introduce $\bar{v}$ as the solution to
\begin{equation} \label{defvi} \frac{\partial}{\partial s}\bar{v}_t(s,\lambda,K)
= \mathfrak{a} e^{-K_s}\left( \bar{v}_t(s,\lambda,K)\right)^2, \quad \bar{v}_t(t,\lambda,K)=\lambda, \end{equation}
we obtain that for all $s \leq t$, $\lambda > 0$,
$$ \bar{v}_t(s,\lambda,K)\geq \tilde{v}_t(s,\lambda,K,Y), $$
implying, using \eqref{laplace},
\begin{equation*} \E_y \left[ e^{-\lambda Z_t}\Big| K \right] \geq e^{-y \bar{v}_t(0,\lambda,K)}. \end{equation*}
Letting $\lambda$ go to infinity yields
\begin{equation*} \P_y \left( Y_t=0| K \right) = \P_y \left( Z_t=0| K \right) 
\geq e^{-y \bar{v}_t(0,\infty,K)}.
\end{equation*}
But \eqref{defvi} has an explicit solution, and
$$ \bar{v}_t(0,\infty,K) = \left( \mathfrak{a} \int_0^t e^{-K_u}du \right)^{-1}.$$
We thus deduce that for any $t \geq 0$,
$$ \P_y \left( Y_t>0\right) \leq 1-\E \left[ e^{-y( \mathfrak{a} \int_0^t e^{-K_u}du )^{-1}} \right]. $$
A direct application of \cite[Theorem 7]{BPS} with $F(x) = 1-e^{-y( \mathfrak{a} x)^{-1}}$ gives the long time behaviour of the right hand side of the previous inequality. Finally,
\begin{align*} \P_y \left( X_t>0\right)&=\P_y \left( Y_{\int_0^t r(X_s)ds} >0\right)
\leq \P_y \left( Y_{\underline{r}t} >0\right) \leq 1- \E \left[ e^{-y( \mathfrak{a} \int_0^{\underline{r}t} e^{-K_s}ds )^{-1}} \right]. 
\end{align*}
\end{proof}

We end this proof section by the study of the conditions under which the process $X$ or its superior limit drift to infinity.

\begin{proof}[Proof of Equation \eqref{rem_no_abs}] \label{no_abs}
Let $a\in\mathcal{A}$. Then $(1,a]\subset\mathcal{A}$ by definition. In particular, we may take $a \in (1,2]\bigcap \mathcal{A}$, which implies that
$$\frac{g(x)}{x}-a\frac{\sigma^2(x)}{x^2}\geq 
\frac{g(x)}{x} - \frac{2\sigma^2(x)}{x^2}.
 $$
Now, recall that for $x>0$,
\begin{equation*}
 I(x)=\lim_{a\rightarrow 1}I_a(x) = \int_{\mathbb{R}_+} \left[zx^{-1}+\ln\left(\frac{1}{1+zx^{-1}}\right)\right]\pi(dz).
\end{equation*}
Using that for any $u>0$, $\ln(u)<u-1$, we obtain that 
$$ I(x)<\int_{\R_+} \left( \frac{1}{1+zx^{-1}}-1+zx^{-1} \right)\pi(dz)=\int_{\R_+} \frac{z^2x^{-2}}{1+zx^{-1}}\pi(dz).$$
And by continuity, we deduce that there exists $a \in \mathcal{A}$ such that
$$ I_a(x)<\int_{\R_+} \frac{z^2x^{-2}}{1+zx^{-1}}\pi(dz), $$
hence
$$ -p(x)I_a(x)\geq -
  p(x)\int_{\R_+} \frac{z^2x^{-2}}{1+zx^{-1}}\pi(dz).$$
This ends the proof.
\end{proof}

\begin{proof}[Proof of Proposition \ref{3_behaviours_expl}]
Recall that the time-changed process $Y$ is a weak solution to \eqref{EDS_Y}, and let us introduce the process $V$ via $V_t:=1/Y_t , t \geq 0$, which is well defined as $Y_t$ does not reach $0$ under Assumption \ref{B4} (see the third point of Remark \ref{rem:cond-slowfast}). Applying Itô's formula with jumps, we obtain that $V$ is a weak solution to the SDE:
\begin{multline*}
V_t = V_0 +  \int_0^t V_s\left( 2V_s^2 \frac{\sigma^2(V_s^{-1})}{r(V_s^{-1})}-V_s\frac{g(V_s^{-1})}{r(V_s^{-1})} + \frac{p(V_s^{-1})}{r(V_s^{-1})} \int_{\R_+}\left( \frac{1}{1+zV_s}-1+zV_s \right)\pi(dz) \right)ds\\-\int_0^tV_s^2\sqrt{\frac{2\sigma^2(V_s^{-1})}{r(V_s^{-1})}}dB_s+\int_0^t\int_0^1\int_0^1 
\left(\frac{1}{\theta} - 1\right)V_{s^-} N(ds,dx,d\theta)\\
 +\int_0^t\int_0^{p(V^{-1}_{s^-})/r(V^{-1}_{s^-})}\int_{\mathbb{R}_+}V_{s^-}\left(\frac{1}{1+zV_{s^-}}-1\right)\widetilde{Q}(ds,dx,dz).
\end{multline*}
Now, if we introduce the processes $\tilde{K}$ and $\tilde{Z}$ via:
\begin{multline*}
\tilde{K}_t =  \int_0^t \left( 2V_s^2 \frac{\sigma^2(V_s^{-1})}{r(V_s^{-1})}-V_s\frac{g(V_s^{-1})}{r(V_s^{-1})} + \frac{p(V_s^{-1})}{r(V_s^{-1})} \int_{\R_+}\left( \frac{1}{1+zV_s}-1+zV_s \right)\pi(dz) \right)ds\\-\int_0^t\int_0^1\int_0^1 
\ln \theta N(ds,dx,d\theta)
\end{multline*}
and $\tilde{Z}_t:=V_te^{-\tilde{K}_t}$ for any $t\geq 0$, we obtain, applying again Itô formula with jumps:
\begin{multline*}
\tilde{Z}_t = V_0 -\int_0^te^{-\tilde{K}_s}V_s^2\sqrt{\frac{2\sigma^2(V_s^{-1})}{r(V_s^{-1})}}dB_s\\
 +\int_0^t\int_0^{p(V^{-1}_{s^-})/r(V^{-1}_{s^-})}\int_{\mathbb{R}_+}e^{-\tilde{K}_{s^-}}V_{s^-}\left(\frac{1}{1+zV_{s^-}}-1\right)\widetilde{Q}(ds,dx,dz).
\end{multline*}
In other words, $\tilde{Z}$ is a non-negative local martingale, and thus a supermartingale. It converges to a non-degenerated and non-negative random variable $\tilde{W}$. We conclude the proof as the proof of points $i)$ and $ii)$ of Proposition \ref{3_behaviours}.
\end{proof}

\appendix
\section{Detailed computation of the limit of $I_a$}\label{app:I}
In this section, we prove that for all $x>0$,
$$
\lim_{a\rightarrow 1}I_a(x)= I(x)
$$
where we recall that
$$
I_a(x) = a\int_{\mathbb{R}_+} \frac{z^2}{x^2}\left(\int_0^1 \frac{1-v}{(1+zx^{-1}v)^{1+a}}dv\right)\pi(dz),\quad I(x)= -\int_{\mathbb{R}_+} \left[\ln\left(1+zx^{-1}\right)-zx^{-1}\right]\pi(dz).
$$
Note that computing the integral, we have
\begin{align}\label{eq:Ia2}
I_a(x) = \int_{\mathbb{R}_+}\left(zx^{-1}+\frac{1-(1+zx^{-1})^{1-a}}{1-a}\right)\pi(dz),
\end{align}
and
$$
\lim_{a\rightarrow 1}\frac{1-(1+zx^{-1})^{1-a}}{1-a} = -\ln(1+zx^{-1}).
$$
By Taylor's formula applied to the function $y\mapsto (1+y)^{1-a}$, there exists $\zeta\in (0, zx^{-1})$ such that
$$
I_a(x) = \int_{\R_+}zx^{-1}\left(1-(1+\zeta)^{-a}\right)\pi(dz)\leq  \int_{\R_+}zx^{-1}\left(1-(1+zx^{-1})^{-1}\right)\pi(dz)<\infty,
$$
according to Assumption \ref{ass_PSS}. Then, using the dominated convergence theorem, we obtain
$$
\lim_{a\rightarrow 1}I_a(x)=I(x). 
$$
\section*{Acknowledgments}
The authors are grateful to V. Bansaye for his advice and comments and to B. Cloez for fruitful discussions.
They also want to thank the two anonymous
referees for several corrections and improvements.
This work was partially funded by the Chair "Mod\'elisation Math\'ematique et Biodiversit\'e" of VEOLIA-Ecole Polytechnique-MNHN-F.X. and by the French national research agency (ANR) via project MEMIP (ANR-16-CE33-0018) and project NOLO (ANR-20-CE40-0015).

\bibliographystyle{abbrv}
\bibliography{biblio}

\begin{thebibliography}{10}

\bibitem{BPS}
V.~Bansaye, J.~C. Pardo, and C.~Smadi.
\newblock On the extinction of continuous state branching processes with
  catastrophes.
\newblock {\em Electron. J. Probab.}, 18:no. 106, 31, 2013.

\bibitem{bansaye2019extinction}
V.~Bansaye, J.~C. Pardo, and C.~Smadi.
\newblock Extinction rate of continuous state branching processes in critical
  {L}{\'e}vy environments.
\newblock {\em arXiv preprint arXiv:1903.06058}, 2019.

\bibitem{bansaye2015scaling}
V.~Bansaye and F.~Simatos.
\newblock On the scaling limits of {G}alton-{W}atson processes in varying
  environments.
\newblock {\em Electron. J. Probab.}, 20, 2015.

\bibitem{BT11}
V.~Bansaye and V.~Tran.
\newblock Branching feller diffusion for cell division with parasite infection.
\newblock {\em ALEA, Lat. Am. J. Probab. Math. Stat}, 2011.

\bibitem{berestycki2018ray}
J.~Berestycki, M.~C. Fittipaldi, and J.~Fontbona.
\newblock {R}ay--{K}night representation of flows of branching processes with
  competition by pruning of {L}{\'e}vy trees.
\newblock {\em Probab. Theory Rel.}, 172(3-4):725--788, 2018.

\bibitem{BN}
M.~Bladt and B.~F. Nielsen.
\newblock {\em Matrix-exponential distributions in applied probability},
  volume~81 of {\em Probability Theory and Stochastic Modelling}.
\newblock Springer, New York, 2017.

\bibitem{boeinghoff2011branching}
C.~Boeinghoff and M.~Hutzenthaler.
\newblock Branching diffusions in random environment.
\newblock {\em Markov Process Relat.}, 2012.

\bibitem{EK}
S.~N. Ethier and T.~G. Kurtz.
\newblock {\em {M}arkov processes: characterization and convergence}, volume
  282.
\newblock John Wiley \& Sons, 2009.

\bibitem{he2018continuous}
H.~He, Z.~Li, and W.~Xu.
\newblock Continuous-state branching processes in {L}{\'e}vy random
  environments.
\newblock {\em J. Theor. Probab.}, 31(4):1952--1974, 2018.

\bibitem{hermann2018markov}
F.~Hermann and P.~Pfaffelhuber.
\newblock Markov branching processes with disasters: extinction, survival and
  duality to $p$-jump processes.
\newblock {\em arXiv preprint arXiv:1808.00073}, 2018.

\bibitem{ikeda1989}
N.~Ikeda and S.~Watanabe.
\newblock {\em Stochastic differential equations and diffusion processes},
  volume~24.
\newblock Elsevier, 1989.

\bibitem{keiding1975extinction}
N.~Keiding.
\newblock Extinction and exponential growth in random environments.
\newblock {\em Theor. Popul. Biol.}, 8(1):49--63, 1975.

\bibitem{kurtz1978diffusion}
T.~G. Kurtz.
\newblock Diffusion approximations for branching processes.
\newblock In {\em Branching processes (Conf., Saint Hippolyte, Que., 1976)},
  volume~5, pages 269--292, 1978.

\bibitem{lambert2005branching}
A.~Lambert et~al.
\newblock The branching process with logistic growth.
\newblock {\em Ann. Appl. Probab.}, 15(2):1506--1535, 2005.

\bibitem{lamperti1967limit}
J.~Lamperti.
\newblock The limit of a sequence of branching processes.
\newblock {\em Probab. Theory Rel.}, 7(4):271--288, 1967.

\bibitem{le2013trees}
V.~Le, E.~Pardoux, and A.~Wakolbinger.
\newblock ``{T}rees under attack": a {R}ay--{K}night representation of
  {F}eller’s branching diffusion with logistic growth.
\newblock {\em Probab. Theory Rel.}, 155(3-4):583--619, 2013.

\bibitem{li2018continuous}
P.-S. Li.
\newblock A continuous-state polynomial branching process.
\newblock {\em Stoch. Proc. Appl.}, 129(8):2941 -- 2967, 2019.

\bibitem{li2017general}
P.-S. Li, X.~Yang, and X.~Zhou.
\newblock A general continuous-state nonlinear branching process.
\newblock {\em Ann. Appl. Probab.}, 29(4):2523--2555, 2019.

\bibitem{li2018asymptotic}
Z.~Li and W.~Xu.
\newblock Asymptotic results for exponential functionals of {L}{\'e}vy
  processes.
\newblock {\em Stoch. Proc. Appl.}, 128(1):108--131, 2018.

\bibitem{long}
A.~Marguet and C.~Smadi.
\newblock Parasite infection in a cell population with deaths.
\newblock 2020.

\bibitem{palau2018branching}
S.~Palau and J.~Pardo.
\newblock Branching processes in a {L}{\'e}vy random environment.
\newblock {\em Acta Appl. Math.}, 153(1):55--79, 2018.

\bibitem{palau2017continuous}
S.~Palau and J.~C. Pardo.
\newblock Continuous state branching processes in random environment: {T}he
  {B}rownian case.
\newblock {\em Stoch. Proc. Appl.}, 127(3):957--994, 2017.

\bibitem{palau2016asymptotic}
S.~Palau, J.~C. Pardo, and C.~Smadi.
\newblock Asymptotic behaviour of exponential functionals of {L}{\'e}vy
  processes with applications to random processes in random environment.
\newblock {\em ALEA, Lat. Am. J. Probab. Math. Stat}, 2016.

\bibitem{pardoux2014path}
E.~Pardoux and A.~Wakolbinger.
\newblock A path-valued {M}arkov process indexed by the ancestral mass.
\newblock {\em ALEA, Lat. Am. J. Probab. Math. Stat}, 2015.

\bibitem{protter2005stochastic}
P.~E. Protter.
\newblock Stochastic differential equations.
\newblock In {\em Stochastic integration and differential equations}, pages
  249--361. Springer, 2005.

\end{thebibliography}

\end{document}